\documentclass[a4paper]{article}
\usepackage{lineno,hyperref}

\usepackage{graphicx}
\usepackage{epstopdf}
\usepackage{amssymb}
\usepackage{amsmath}
\usepackage{amsfonts}
\usepackage{amsthm} 

\usepackage{lipsum}

\usepackage{mathtools}
\usepackage{a4wide}
\usepackage{caption}
\usepackage{subcaption}
\usepackage{setspace}
\usepackage{xr}
\usepackage{color}
\usepackage{soul}
\usepackage{xcolor}
\usepackage{mathtools}
\usepackage{setspace}
\usepackage{float}
\usepackage{enumerate}

\usepackage{url}
\usepackage[utf8]{inputenc}
\usepackage[T1]{fontenc}
\usepackage{lmodern}
\usepackage[capitalise,noabbrev,nameinlink]{cleveref}

\usepackage{tabularx}
\usepackage{array}
\usepackage{booktabs}
\usepackage{multirow}

\usepackage{eurosym}
\usepackage{textcomp}

\usepackage[affil-it]{authblk}
\usepackage{longtable}

\newtheorem{theorem}{Theorem}[section]

\newtheorem{lemma}[theorem]{Lemma}

\newtheorem{proposition}[theorem]{Proposition}

\theoremstyle{definition}

\begin{document}



\title{Superwind and navigation of least time \\ on Riemannian manifolds}

\author{Nicoleta Aldea$^1$}
\author{Piotr Kopacz$^{2}$}



\affil{\small{$^1$Transilvania University of Bra\c{s}ov, Faculty of Mathematics and Computer Science\\ Iuliu Maniu 50, Bra\c{s}ov, Romania}}
\affil{$^2$Gdynia Maritime University, Faculty of Navigation \\  Al. Jana Paw{\l}a II 3, 81-345 Gdynia, Poland}

\date{}
\date{{\normalsize {\small {e-mail:} \texttt{codruta.aldea@unitbv.ro, p.kopacz@wn.umg.edu.pl}}}}
\maketitle

\begin{abstract} 

In this work, 
 minimum-time navigation on Riemannian manifolds  is analyzed by means of Finsler geometry. 
The introduced notion of a superwind being anisotropic, reducible, and unknown a priori encompasses a wind in the spirit of Zermelo's navigation,  and a gravitational wind in the sense of a slippery slope model, including Matsumoto's navigation on a~mountain slope as the particular cases. 
 The considered generalization also deploys a non-uniform 
 slope, 
where the counterbalance to the transverse 
 impact of superwind causing a lateral 
  drift 
 varies over space. 
This requires 
in particular an extension of the existing theory of   general $(\alpha, \beta)$-metrics, which is presented in our study. The solution of the 
 navigation problem is given by a new Finsler metric and the time-minimizing 
  geodesics, which are determined.  
 Moreover, the thorough analysis establishes the necessary and sufficient conditions for strong convexity under which the resultant velocity defines a Finsler metric.  
   For completeness, a variety of comprehensive and comparative examples are explored in dimension two.  
\end{abstract}


\bigskip \noindent \textbf{MSC 2020}: 53B40, 53C60.

\smallskip \noindent \textbf{Keywords:} Time-minimizing geodesic, Riemann-Finsler manifold, General $(\alpha, \beta)$-metric, \linebreak Matsumoto slope-of-a-mountain problem, Zermelo navigation problem, Slippery slope model,  
 Anisotropic deformation, Time-optimal navigation.   



\tableofcontents 

\section{Introduction}
\label{Intro}

\subsection{Main objectives and background of the study} 

The ultimate 
 goal of this study is to determine the geodesics of a new Finsler metric obtained in this work on a Riemannian manifold $(M, h)$ of arbitrary dimension, where the perturbing vector field is considered more generally than both a wind $W$ in the sense of the Zermelo navigation data $(h, W)$, and the gravity impact 
 as in the slippery slope model, including the Matsumoto navigation on a mountain slope \cite{BRS, matsumoto, slippery}. The desired 
 geodesics correspond to the time-minimizing trajectories that solve the tackled problem of optimal navigation in a purely geometric way. The analyzed generalization involves in particular two new issues, i.e., the concept of a superwind and a non-uniform slope, 
 which enable us to construct the improved model for least time navigation by means of 
 Riemann-Finsler  geometry. 
 Moreover, the study is carried out to establish the necessary and sufficient conditions for strong convexity under which the resultant velocity defines a 
Finsler metric. Since the existing theory for the general $(\alpha, \beta)$-metrics in the literature is not sufficient to solve the addressed task, 
the 
 investigation presented  contains 
 its extension.  
 Furthermore, it also describes an effective anisotropy-based method that allows one to generate 
Finsler metrics, 
 incorporating the standard 
 Zermelo navigation technique, which relies on a rigid translation of a Riemannian indicatrix as the special case.

The initial idea introducing the concept of a {slippery mountain slope} which made a direct link between two classical problems of time-minimizing navigation in Finsler geometry, i.e., Zermelo's navigation problem (ZNP for short; \cite{Zer0,Zer,BRS}) and Matsumoto's slope-of-a-mountain problem (MAT\footnote{In the preceding studies the abbreviation ``MAT'' referred to the original Matsumoto problem which is analyzed under gravity \cite{matsumoto}. In this work, we admit arbitrary wind, 
 as in the Zermelo navigation, however, all the  Matsumoto-like problems, where the perturbation 
is not gravitational,  we also denote by MAT. All of them yield a Finsler metric of Matsumoto type.} for short;  \cite{matsumoto,S-Sabau,Sabau_Tai}) was presented in \cite{slippery}. Next, the 1-parameter exposition was  extended in \cite{cross, slipperyx} and then generalized in \cite{general}, effectively unifying the preceding studies. By then, both above scenarios were considered in the literature 
 as two separate problems, essentially as based on dissimilar constructions of the respective Finslerian indicatrices; see, for example, \cite{baoroblesricci} in this regard. 

Moreover, the slippery slope model made it possible to express the original Matsumoto reasoning 
 in the style of Zermelo's navigation, with the use of a gravitational wind\footnote{A gravitational field reads $\mathbf{G}=\mathbf{G}^{T}+\mathbf{G}^{\perp}$, where the {gravitational wind} $\mathbf{G}^{T}$  is tangent to a mountain slope $M$ and acts in the steepest downhill direction and $\mathbf{G}^{\perp }$ is normal to $M$, considering a 2-dimensional model of the slope.} described  therein. If such a vector field is considered in full, i.e., without resisting its action on the slope\footnote{In the slippery slope model, a transverse gravity-additive pushing a craft or walker to the side (downhill) is restrained due to nonzero traction on the slope which is in general described by a real parameter $\eta\in[0, 1]$ called a {traction coefficient} \cite{slippery}.}, then it 
 corresponds to a particular wind in the Zermelo sense. This particularity, however, was a limitation regarding wind direction and force (i.e., the norm with respect to the background Riemannian metric $h$) that were admitted in the previous investigations. In the current study, we shall drop such a restriction so that the model is complemented in this regard, 
incorporating the Zermelo navigation in the full 
 range. 
 Thus, this will extend the scenarios considered under a gravitational  wind that blows 
 along one fixed direction (steepest descent) 
as in \cite{matsumoto, slippery, slipperyx, general,Nicprw}. 
 Consequently, in contrast to the framework of the Matsumoto slope, the perturbation will now act  in an arbitrary direction and its norm will not 
 depend solely on the slope gradient\footnote{Moreover, physically speaking, the action of the vector field is not only now limited to gravity, but it can have various (arbitrary) physical sources in applied modelling, for example, electric, magnetic, hydrometeorological.
}. So, the landscape resembles the classical 
 Zermelo navigation in the presence of space-dependent wind, where the speed induced by the wind on the craft travelling 
 on a Riemannian sea or slope is always lower than the craft's own speed \cite{SH, BRS, chern_shen}. 
 On the other hand, however, the perturbation in our approach can also be counterbalanced like the gravitational wind, 
 and this essential property is not shared in Zermelo's navigation. More precisely, this means the ability to reduce the transverse component of wind with respect to the direction of motion indicated by the craft's own velocity.  

In order to accomplish the above mentioned task, we begin by 
 introducing the notion of a generalized wind which will encompass as the particluar 
 cases a wind as is known in 
 the Zermelo navigation as well as a reducible gravitational wind covering the original Matsumoto setting applied recently in the slippery slope model \cite{slippery}. 



\subsection{Superwind and non-uniform 
 background 
space}

Let $u$ be own velocity of an imaginary craft or ship proceeding at its maximum speed $||u||_h=1$ on a Riemannian manifold $(M, h)$, where a 
 tangent vector field $W(x)$ has been installed, 
 $x\in M$. Due to the action of wind, in general, the resultant velocity and craft's own velocity are  not collinear and the corresponding speeds differ from each other. Clearly, the resultant speed would be the same in all directions in the absence of perturbation. 
We admit that the impact of the perturbing vector field causing a side drift can be resisted (partially or entirely) as is considered for the gravitational wind in the slippery slope model \cite{slippery}. Now, however, the perturbation will blow in an arbitrary direction and its force needn't be determined by gravity, i.e., the gradient vector field, as with the Zermelo navigation in this regard. It means that a transverse wind component which is orthogonal to $u$ can be compensated, 
 scaling by a real parameter $\eta\in[0, 1]$ and it determines the drifiting (sliding) effect on a Riemannian slope or sea. The parameter is named a \textit{traction coefficient} as in the preceding study \cite{slippery}. In short, the greater the traction (more generally, the ability of counterbalance or resistance to any type of wind), the lesser the drifting. The part which is left (unreduced) has the actual influence upon the trajectory, 
 namely, the further equations of motion, the resultant Finsler metric and in consequence, the behaviour of time-minimizing geodesics. 

 More precisely, the anisotropic 
 perturbation $\mathcal{W}_{\eta}$ given by 
\begin{equation}
\mathcal{W}_{\eta}=(1-\eta)\text{Proj}_{u^{\perp}}W+\text{Proj}_{u}W,    
\label{eq_superwind}
\end{equation}
or equivalently, 
$\mathcal{W}_{{\eta}}=\eta \mathcal{W}_{MAT}+(1-\eta )W,$  $\eta \in[0, 1],$
 will be called a \textit{superwind}, where  $u^\perp$ denotes the orthogonal direction to $u$ and $\mathcal{W}_{MAT}$ stands for $\text{Proj}_{u}W$ as in the original Matsumoto setting , i.e., $\mathcal{W}_1=\mathcal{W}_{MAT}$ \cite{matsumoto}.  Therefore, for all $\eta \in[0, 1]$ we get $||\mathcal{W}_{{\eta}}||_h\in[||\mathcal{W}_{MAT}||_h, ||W||_h]$, where $||\mathcal{W}_{MAT}||_h\in[0, ||W||_h]$. 
 The action of superwind involves in general two geometric transformations, namely, the direction-dependent deformation and rigid translation. They relate to the Riemannian indicatrix ($h$-circle)  we begin with on the way to the final solution of optimal navigation which is to determine geodesics of a (new) Finsler metric on $M$. We recall that the Finslerian geodesics in turn correspond  to the time-minimizing paths. This makes a substantial difference from the Zermelo navigation method which is widely used in Finsler  geometry and physics, where the elliptical $h$-indicatrix is only rigidly translated by the entire wind $W$, while constructing the resultant Randers metric \cite{chern_shen, cr, BRS}. The former  transformation will depend on the angle between $u$ and $W$, no matter their specific directions\footnote{Since $W(x)$ is known (fixed) a priori for all $x\in M$,  the anisotropy actually relates to the direction of $u$ which is unknown initially, keeping in mind that now $W$, acting on the slope, will not have one fixed direction like the gravitational wind $\mathbf{G}^{T}$, i.e.,  steepest downhill along a negative gradient, but will be arbitrary. This implies that the anisotropic active  wind $\mathbf{G}_\eta$ from \cite{slippery} becomes the particular case of the  superwind  $\mathcal{W}_{\eta}$.}. Furthermore, note that the term $\eta \mathcal{W}_{MAT}$ 
 will induce a Finsler metric of Matsumoto type, whereas the term $(1-\eta )W$ will induce a Finsler metric of Randers type, if they are considered separately. 
  We remark that both types belong to a class of $(\alpha, \beta)$-metrics that have been widely investigated are present in the theory and applications of Finsler geometry \cite{baoroblesricci,Kristaly2,B-Miron}.

 Having introduced the notion of superwind, we can recall in different words that the key task 
 in the problem tackled thus far is to find the direction of $u$  (optimal steering) under the action of superwind so that the resultant velocity determines the path which has the minimal Finslerian length 
 between 
a starting point and an endpoint on a Riemannian manifold $(M, h)$. 

 In short, 
 unlike the Zermelo wind, 
 a superwind 
 is in general reducible, 
 anisotropic and unknown a priori. In order to clearly emphasize the difference in the nature of the perturbations,  
 the vector field $W$, arising from the standard navigation data, i.e., in the Zermelo sense, will be called a \textit{common wind}, or alternatively an \textit{ordinary wind}. 
Thus, we have $\mathcal{W}_0=W$, becoming the particular case of the superwind $\mathcal{W}_{\eta}$.  
Bearing in mind the Zermelo navigation framework, one can say that regardless of 
 the direction of motion, the traction coefficient $\eta$ does not influence $W$, since such a wind is not subject to any reducibility. In other words, the common 
 wind 
 does not depend on the $u$-direction at all. 
So, its actual impact upon the ship's velocity is always taken in full. This is in contrast to, for instance, the Matsumoto-type navigation on the slope under gravity. 
 Although non-trivial, yet without anisotropy involved, 
 the Zermelo navigation looks like 
 the simplest case among all time-minimizing $\eta$-navigations, $\eta\in[0, 1]$,     
 where the unit sphere of the resulting Finsler metric is simply the $\mathcal{W}_{0}$-translate of the unit sphere of the background Riemannian metric $h$.

\begin{figure}[h!]
\centering
~\includegraphics[width=0.67\textwidth]{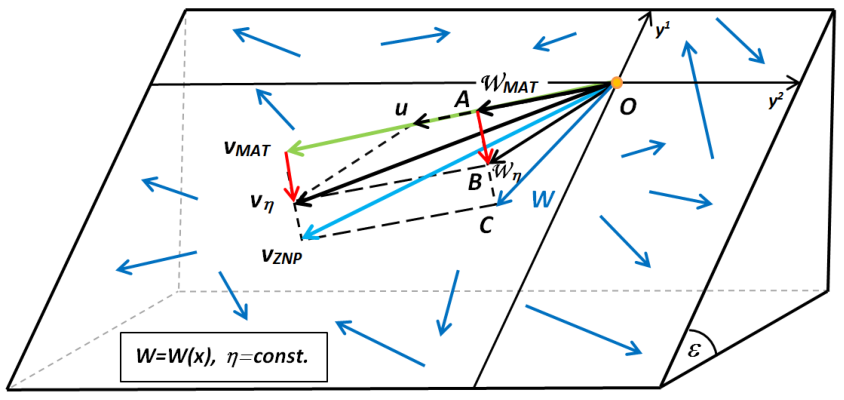}
~\includegraphics[width=0.30\textwidth]{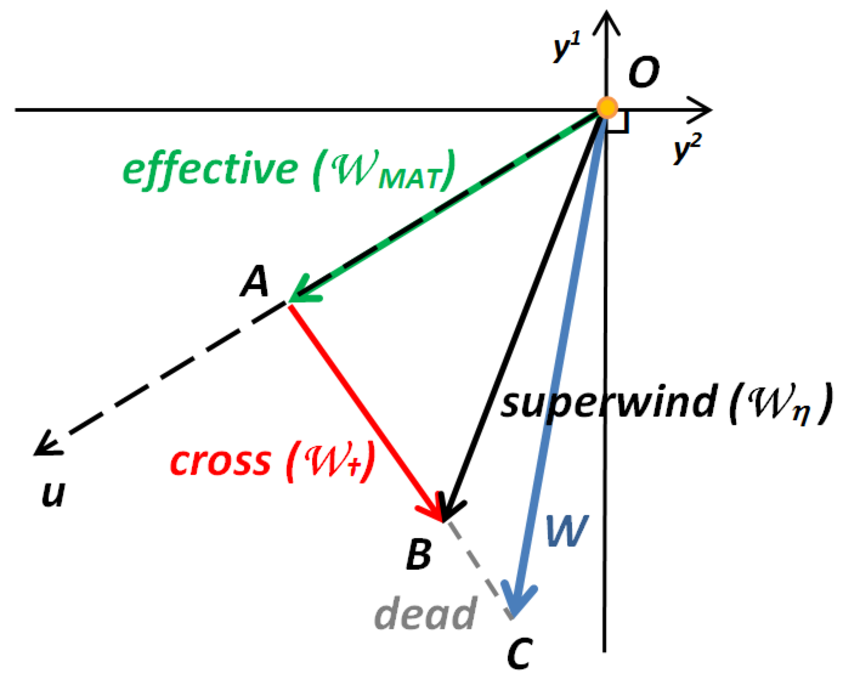} 
\caption{Left: A two-dimensional model being an inclined plane  $M$ of the slope
angle $\varepsilon$ in $\mathbb{R}^{3}$, under the action of the anisotropic superwind $\mathcal{W}_{\eta}$, where the space-dependent ordinary wind $W$ (i.e., in the Zermelo sense; blue) has an arbitraty direction and norm (weak); $O\in M$. The resultant velocity is represented by $v_{\eta}$, including the boundary cases for $\eta\in\{0, 1\}$, i.e., the Zermelo case ($v_{ZNP}$, light blue) and the Matsumoto case ($v_{MAT}$, green), respectively. The transverse (cross) component $\overrightarrow{AB}$ (red) of the superwind wind  $\mathcal{W}_{\eta}=\overrightarrow{OB}$ varies  in particular w.r.t. the traction coefficient $\eta$. 
Right: 
A common wind $W=\mathcal{W}_0$ (blue) is a vector sum of a superwind $\mathcal{W}_{\eta}$  (solid black) and a  dead superwind ($\overrightarrow{BC}$, dashed grey); $OA\perp AC$. The superwind is decomposed into two orthogonal components: an effective superwind (which coincides with  $\mathcal{W}_{MAT}=\mathcal{W}_1$ in the model;  green) and a cross superwind $\mathcal{W}_{\dag}$ (red). The cross impact in general varies, i.e., $||\mathcal{W}_{\dag}||_h\in[0, ||\textnormal{Proj}_{u^{\perp}}W||_h]$, while the along-wind impact is always full, i.e., $||\mathcal{W}_{MAT}||_h$  in the model.    
The Riemannian own velocity of the craft $u$ is shown as a dashed black vector, $||u||_h=1$
}
\label{fig_superwind}
\end{figure}

For comparison, it is worth pointing out that the $h$-indicatrix in MAT ($\eta=1$) is deformed anisotropically and the resultant velocity $v$ and own velocity $u$ are always collinear, however, there is not any rigid translation applied. 
For clarity's sake and simple interpretation, imagine two slippery slopes equipped with the same traction coefficient $\eta$: one under action of gravity expressed by a gravitational wind $\mathbf{G}^{T}$, 
 and the second in the presence of a common wind $W$. 
 In the former case,  the impact of the perturbation 
 is subject to change (reduced) into $\mathbf{G}_{{\eta}}$ (see, e.g., \cite{slippery} for more details), while in the latter, the same wind impact is preserved entirely and continously during navigation on the slope, since traction has no influence on non-reducible $W$ at all. There is a close analogy with the real-world application in marine navigation, where two types of natural perturbations affect a ship's intended route during its voyage at sea, i.e., a usual 
 wind (corresponding to the above $\mathbf{G}_{{\eta}}$-slope) and a water current, e.g., a tidal stream (corresponding to the above $W$-slope); see, for example, \cite{transnav}.  
The stream data, i.e., the so-called set and rate\footnote{In marine navigation, set is the direction of the current, and its rate (drift) is its magnitude (speed); these describe the induced effect of the perturbation on a ship's motion. 
}  are taken entirely as they are given or measured (regardless of the ship's size, heading and speed, direction of current) and such are applied to the triangle of velocities 
 in 
 the navigational calculations done onboard a vessel at sea. Roughly speaking, the offset vector induced on 
 $u$ by the current is considered equal to the ``raw'' current velocity. In contrast, 
 the wind force induced on a ship can be partially resisted (i.e., by compensating the lateral wind component), resulting in, for example, changes in the heel angle\footnote{A rotation around a longitudinal ship axis; if repeated on both sides of the ship, such angular movements are called rolling.} and not only in the course offset (changing the direction of the ship's track). This scenario resembles the slippery slope model, namely, the angle of the ship's heel at sea (resisting the lateral wind drift) corresponds to the traction effect on the hillside (partially counterbalancing the gravity-additive which pushes a walker or craft sideways) \cite{slippery}. For the wind drift (called leeway), the ship's parameters (e.g., size, above-water side surface area, draught) matter substantially in reality, causing different drifts. In the edge case, there may not be any changes in the  heading of the heeled ship (full compensation of wind effect upon $u$), although the wind blows continously, so analogous to the MAT case in our model.

The superwind, encompassing as particular cases the winds in the spirit of both the Zermelo navigation and slippery slope under gravity \cite{SH, BRS, slippery}, gives rise to a more general 
 framework for time-minimizing navigation on Riemannian manifolds in the presence of space-dependent and weak perturbation than the  ones described in the literature 
 thus far.  
In particular, the notion of a traction that initially referred to the scenarios on a mountain  slope under gravity in dimension 2 (\cite{slippery}) has now a broader interpretative (physical) meaning.  
 Namely, it stands for the ability of resistance that opposes the (lateral) motion 
 on any type of background. 
For instance, the traction coefficient $\eta$ 
 resembles and to some extent incorporates  
  the drag coefficient  that is used in fluid dynamics to describe the drag or resistance of an object in a fluid environment, for example  water or air. 
Furthermore, one can also model 
 various external forces that are described and induced by a superwind, i.e., in the sense of, counterbalance,  
 for instance, magnetic, gravitatonal, electric, hydrometeorological; see also \cite{markvorsen,Iran_RWA}. 

The scenarios on  
 the common 
 Euclidean plane, if used in the slope models under gravity, are simplified to the Riemannian case, where the gradient-dependent gravitational wind 
 vanishes.      
This picture is in striking contrast with the non-Riemannian 
 Zermelo navigation, 
 including a variety of nonzero winds that blow tangentially to the plane, however, they are not subject to any counterbalance; 
 cf. \cite{transnav}. 
 The superwind will allow us to cover and unify 
 all the above properties that come from different preceding models (types of navigation).

 Clearly, a superwind can be reduced 
  to some specific winds known from other studies. As an example, one can extract the active gravitational wind $\mathbf{G}_{MAT}$ as in the classical  Matsumoto slope-of-a-mountain problem. This  can be done now twofold. The method used so far, as in \cite{slippery}, is to specialize the common wind $W$ to the full gravitational wind $\mathbf{G}^T$ which corresponds to a specific  $\mathbf{G}_{\eta}$, 
 and then to apply the maximum counterbalance by setting $\eta=1$, which yields $\mathbf{G}_{MAT}$. In short,  $W\leadsto\mathbf{G}^T=\mathbf{G}_{\eta=0}\leadsto\mathbf{G}_{MAT}$. Alternatively, another sequence is first to compensate the transverse component of the superwind $\mathcal{W_\eta}$ entirely by traction ($\eta=1$) which gives  $\mathcal{W}_{MAT}$, and then to specialize its direction and norm, i.e., to be gradient-driven, where its physical source comes from gravity. 
 Briefly, $W=\mathcal{W}_{\eta=0}\leadsto\mathcal{W}_{MAT}\leadsto\mathbf{G}_{MAT}$.


The part of a superwind that is neutralized 
 due to traction, 
 i.e., $W-\mathcal{W}_{\eta}=\eta\text{Proj}_{u^{\perp}}W=\eta (W-\mathcal{W}_{MAT})$ has no impact on the further equations of motion and the resultant Finsler metric. By close analogy with the scenarios considered under a gravitational wind $\mathbf{G}_\eta$ (\cite{slippery, cross, slipperyx, general}), 
 we name it a \textit{dead} superwind.  
 Its norm is a measure of counterbalance (resistance) 
 to the full superwind 
 acting on a Riemannian manifold $(M, h)$. As a consequence, 
 the transverse offset 
 is weaker in comparison with $W$, i.e., it yields $\mathcal{W}_{\eta}$, $\eta\in (0, 1]$. The  
  impact is decreased by the $\eta$-traction if the craft doesn't go parallel to the superwind direction. For example, it vanishes in ZNP, since the corresponding superwind has the strongest force  ($\mathcal{W}_{\eta}=W$), causing a full drift. 
 On the other edge, if $\eta=1$, then the superwind is reduced ($\mathcal{W}_{\eta}=\mathcal{W}_{MAT}$) so that there is no side drift at all. The impact of perturbation is decreased as much as possible because its transverse component with respect to $u$-direction is entirely resisted. Hence, the related dead superwind is then maximal\footnote{For the given direction of motion indicated by $u$ and wind $W(x)$, $x\in M$.}, i.e, $\text{Proj}_{u^{\perp}}W$.

Taking the above into account, the resultant velocity $v_\eta$ reads 
\begin{equation}
v_\eta=u+\mathcal{W}_{\eta},   
\end{equation} 
where $\eta\in [0, 1]$.  In general, the velocities $v_\eta$ and $u$ are not collinear because of the lateral slide or drift.  The collinearity occurs in  the Matsumoto model ($\eta=1$) 
 or when $\eta\in[0, 1]$, while the craft moves exactly with or against the superwind\footnote{Actually, in such particular cases, $\forall \eta\in[0, 1]\ \mathcal{W}_{\eta}=W$, since $\text{Proj}_{u}W=W$, which yields  $h(v_\eta, u)=\pm||v_\eta||_h||u||_h$, so $v_\eta\parallel u$, $||v_\eta||_h=||u||_h\pm||W||_h$.}, i.e., $\mathcal{W}_{\eta}\parallel u$, in which case 
 $\text{Proj}_{u}W=W$. 
In order to maintain consistency with the preceding studies we follow some terminology 
 employed 
 in the descriptions of the slippery slope models \cite{slippery, slipperyx, cross, general}. 
We next decompose the superwind into two components, i.e., the orthogonal projection onto $u$, which is called an \textit{effective} superwind as well as onto the orthogonal direction to $u$, which is called a \textit{cross} superwind. 
The norm of the latter, being subject to compensation, is a linear measure of drift 
 that in general depends on the parameter 
 $\eta$, direction of $u$, and wind force $||W||_h$. 
Thus, for each point $x\in M$ and  direction of motion  
 the vector sum of the maximum 
 lateral (cross)\footnote{In the 1-parameter model analyzed in this research, the effective wind is always maximal for given direction of $u$ and norm $||W||_h$, i.e.,  equal to $\text{Proj}_{u}W=\mathcal{W}_{MAT}$ analogously to the scenario presented in \cite{slippery}. Namely, the longitudinal component of superwind w.r.t. $u$ is not reduced at all, so it acts always in full.} and longitudinal (effective)  components yields the common  wind $W$ as in the Zermelo case. For clarity, see also \cref{fig_superwind} that illustrates a two-dimensional model on the inclined plane. 

The reducibility property\footnote{Alternatively, from another point of view, the craft's capability to counteract the superwind impact.
} of superwind is manifested in the fact that any compensation with $\eta\in(0, 1]$ 
 automatically incorporates anisotropy 
  and influences the resultant velocity $v_\eta$ 
   considerably. It turns out again that the only case (wind) with the $u$-directional independence refers to ZNP. 

%



\begin{figure}[h!]
\centering
~\includegraphics[width=0.7\textwidth]{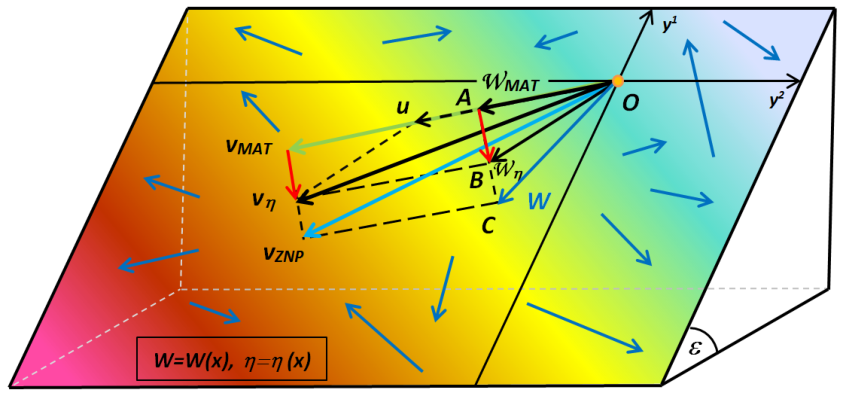} 
\caption{A two-dimensional model of the non-uniform slope being an inclined plane as in \cref{fig_superwind}, with a space-dependent traction coefficient $\eta(x)\in{[0, 1]}$ in addition (marked illustratively by the color-coded slope). The varying traction will influence the behaviour of time-minimizing geodesics and related time fronts; see the specific examples on further reading (\cref{sec_ex2,sec_ex3}).   
}
\label{fig_superwind2}
\end{figure}


The previous model for time-minimizing navigation (\cite{slippery}) is first generalized by the above introduced concept of superwind. The second new issue is to make a scaling factor 
 that varies with respect to a position on $(M, h)$, i.e., a space-dependent traction coefficient $\eta=\eta(x), x\in M$.   
Thus, we shall drop the constraint that traction must be fixed which leads to a more general framework. However, in order to do this, we need to introduce and develop the new Finsler metrics, since the existing results on the general $(\alpha, \beta)$-metrics (e.g., \cite{Yu} applied effectively in \cite{slippery, slipperyx, cross, general}) are insufficient for the current goal. 

The varying traction means that the slope is non-uniform, having different magnitude of resistance 
 to the superwind impact at different points of the Riemannian manifold. This yields changes in the equations of motion and, in consequence, in the behaviour of time geodesics we seek. 
Recalling \eqref{eq_superwind}, we now consider 
\begin{equation}
\mathcal{W}_{{\eta}}=\eta(x) \mathcal{W}_{1}+(1-\eta(x))W,   
\label{eq_superwind2x}
\end{equation}
where  $\eta$ is a $C^{\infty }$-function on $M$, $\eta(x) \in[0, 1]$. To support intuiton and for some interpretation, we can make use of a 2-dimensional model of the generalized slippery slope; see \cref{fig_superwind2} in this regard as well as the thorough examples in \cref{sec_ex2,sec_ex3} on further reading.  
Such a new set-up resembles, for example, a mountain or hill slope covered in different regions by different grounds, e.g., dry, wet, icy or snowy that yield changes in tractions and so, slides (drifts).  The varying $\eta$ will influence the resultant path between two given points on $(M, h)$. To sum up briefly, admitting the traction coefficient to vary over space actually extends the notion of superwind.

Lastly, we remark that in general the perturbing vector field in the analyzed model 
 could also be strong and vary over time, however, this is beyond the scope of this paper. 
 Such a setting would lead in particular to time-dependent Finsler metrics, whose the least time trajectories can be modeled in the ZNP case as lightlike geodesics of an associated Lorentz-Finsler metric \cite{CJS}. 
It is also worth noting that the notion of superwind can be effectively extended with application of another parameter\footnote{Such a parameter was applied effectively in the so-called \textit{slippery-cross-slope}  model and named an \textit{along-traction coefficient} $\tilde{\eta}\in[0, 1]$ \cite{slipperyx}.} instead, scaling its longitudinal component with respect to $u$-direction and not the transverse one,  as shown in \cite{slipperyx}. Moreover, it can be ultimately combined with a 2-parameter model for time-optimal navigation with $(\eta, \tilde{\eta})\in [0, 1] \times [0, 1]$ as presented recently in \cite{general}.  
Looking further ahead, this would be a natural progression of the current work as well as an interesting and fruitful area for future research.


\subsection{
Problem statement 
and main contributions
}

The research problem of interest that we deal with by means of Riemann-Finsler geometry  is formulated as follows

%

%

\begin{quote}
\begin{itshape}
\noindent {Suppose a craft moves at 
 its maximum own 
 speed $||u||_h=1$ 
 on 
 a Riemannian manifold $(M, h)$ under the influence of a superwind $\mathcal{W}_{{\eta}}$ 
 with 
 a space-dependent  traction  coefficient  $\eta(x)\in[0, 1],$ $x\in M$, 
 admitting a 
 drift 
  in an arbitrary 
direction. 
Which path should the craft follow to get from one point to another in the least time?

}
\end{itshape}
\end{quote}


A response to the question will be achieved by a Finsler metric, 
 called the \textit{superwind metric} and denoted by $\tilde{F}_{\eta }$, on account of two key details. The first one refers to the fact that in Finsler 
geometry, the notion of arc length can be interpreted as time and thus, one can 
 regard the time-minimal paths parametrized by arc length as being, locally, the Finsler
geodesics. They are called {time geodesics} in the literature; see, e.g., \cite{matsumoto} in this regard. The second remark is that any time geodesic which corresponds to a superwind metric $\tilde{F}_{\eta }$ has unit $\tilde{F}_{\eta }$-length because it arises as a trajectory in the Zermelo navigation method developed throughout the paper. Consequently, the superwind
metric will play an essential role in the navigation problem under consideration. Our first main result formulated in the spirit of \cite{slippery,cross,slipperyx,general} reads 

\begin{theorem}
\label{Thm1}\textnormal{(Superwind metric)} Let $(M,h)$ be an $n$%
-dimensional Riemannian manifold $(M,h)$, $n>1$, and $\eta :M\rightarrow
\lbrack 0,1]$ be a $C^{\infty }$-function on $M$. Given a vector field $W$
on $M$, the time-minimal paths on $(M,h)$ under the action of a superwind $%
\mathcal{W}_{{\eta }}$ as in \eqref{eq_superwind2x} are the geodesics of a
Finsler metric $\tilde{F}_{\eta }$ which satisfies 
\begin{equation}
\tilde{F}_{\eta }\sqrt{\alpha ^{2}+2(1-\eta )\beta \tilde{F}_{\eta }+(1-\eta
)^{2}||W||_{h}^{2}\tilde{F}_{\eta }^{2}}=\alpha ^{2}+(2-\eta )\beta \tilde{F}%
_{\eta }+(1-\eta )||W||_{h}^{2}\tilde{F}_{\eta }^{2},  \label{TH_mama}
\end{equation}%
with $\alpha =\alpha (x,y),$ $\beta =\beta (x,y)$ given by \eqref{NOT},
where either $||W||_{h}<1$ and $\eta (x)\in \lbrack 0,\frac{1}{2}]$, or $%
||W||_{h}<\frac{1}{2\eta }$ and $\eta (x)\in (\frac{1}{2},1]$, for all $%
x\in M$. In particular, if $\eta (x)=1$ for all $x\in M$, then the~superwind
metric is reduced to a Matsumoto metric, and if $\eta (x)=0$ for all $%
x\in M$, it is reduced to a Randers metric.
\end{theorem}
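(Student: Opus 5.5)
The plan is to derive \eqref{TH_mama} directly from the velocity triangle, then prove that its solution is a genuine Finsler metric, and finally check the two limiting cases. The identification of time-minimal paths with geodesics is the standard Zermelo-type argument. The resultant velocity is $v_\eta=u+\mathcal{W}_\eta$ with $h(u,u)=1$, and its $\tilde F_\eta$-length is $1$. Time along a curve therefore equals $\tilde F_\eta$-arc length, so time-minimizing paths are, locally, the $\tilde F_\eta$-geodesics. The substance of the proof is to compute the indicatrix $\{v_\eta\}$ and to check strong convexity.

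\emph{Step 1 (the defining equation).} Expanding \eqref{eq_superwind2x} gives
\[
v_\eta=u+(1-\eta)W+\eta\, h(W,u)\,u .
\]
For $y\in T_xM\setminus\{0\}$, put $v_\eta=y/\tilde F_\eta$ and $A:=y/\tilde F_\eta-(1-\eta)W$. Then $A=(1+\eta h(W,u))u$. Since $\|u\|_h=1$ and $1+\eta h(W,u)>0$ under the weak-wind hypotheses, we get $u=A/\|A\|_h$ and $\|A\|_h=1+\eta\, h(W,A)/\|A\|_h$, that is,
\[
\|A\|_h^2-\eta\, h(W,A)=\|A\|_h .
\]
Multiply by $\tilde F_\eta^2$ and use $\alpha^2=h(y,y)$ and $\beta$ as in \eqref{NOT}. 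With the sign convention $\beta=-h(W,y)$, this produces exactly $\tilde F_\eta\sqrt{\alpha^2+2(1-\eta)\beta\tilde F_\eta+(1-\eta)^2\|W\|_h^2\tilde F_\eta^2}$ on the left. On the right, the expansion gives
\[
\alpha^2+2(1-\eta)\beta\tilde F_\eta+(1-\eta)^2\|W\|^2\tilde F_\eta^2+\eta\beta\tilde F_\eta+\eta(1-\eta)\|W\|^2\tilde F_\eta^2 ,
\]
which is the right-hand side of \eqref{TH_mama}.

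\emph{Step 2 (the $(\alpha,\beta)$-form).} Equation \eqref{TH_mama} is positively homogeneous, so $\tilde F_\eta=\alpha\,\phi(\eta(x),b^2,s)$ with $s=\beta/\alpha$, $b^2=\|W\|_h^2$. Here $\phi$ is determined implicitly by
\[
\phi\sqrt{1+2(1-\eta)s\phi+(1-\eta)^2b^2\phi^2}=1+(2-\eta)s\phi+(1-\eta)b^2\phi^2 .
\]
Existence and smoothness of a positive root for $|s|\le b$ follow from the implicit function theorem. I would check that the $\phi$-derivative of the difference of the two sides does not vanish on the relevant branch, which should again reduce to $1+\eta h(W,u)>0$.

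\emph{Step 3 (strong convexity) is the main obstacle.} Because $\eta$ depends on $x$, $\phi$ acquires an extra explicit $x$-dependence. Yu's criterion for general $(\alpha,\beta)$-metrics therefore has to be extended. I would first prove this extension: the fiberwise computation of the Hessian of $\tfrac12 F^2$ only differentiates in $y$, so $\eta(x)$ behaves as a constant there. The criterion should then remain
\[
\phi>0,\qquad \phi-s\phi_2>0,\qquad \phi-s\phi_2+(b^2-s^2)\phi_{22}>0\quad(|s|\le b<b_0).
\]
Next I compute $\phi_2$ and $\phi_{22}$ by implicit differentiation of the relation in Step 2. I expect the sign conditions to reduce to positivity of $1+\eta h(W,u)$ together with a convexity condition on the deformed indicatrix. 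Geometrically, the indicatrix is $u\mapsto (1+\eta h(W,u))u$, a limaçon-type surface, translated by $(1-\eta)W$; the translation does not affect convexity. Such a surface is strongly convex exactly when $2\eta\|W\|_h<1$. It is star-shaped around the origin, so that $F$ is positive, when $\|W\|_h<1$. The case split in the theorem matches this: for $\eta\le\frac12$ the binding constraint is $\|W\|_h<1$, and for $\eta>\frac12$ it is $\|W\|_h<\frac1{2\eta}$. If the analytic verification becomes messy, I would fall back on this geometric route: compute the curvature of the indicatrix in the 2-plane spanned by $u$ and $W$, and argue by rotational symmetry about $W$.

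\emph{Step 4 (special cases).} For $\eta\equiv1$, \eqref{TH_mama} becomes $\tilde F\alpha=\alpha^2+\beta\tilde F$, so $\tilde F=\alpha^2/(\alpha-\beta)$, which is a Matsumoto metric. For $\eta\equiv0$, both sides share the factor $Q=\alpha^2+2\beta\tilde F+\|W\|^2\tilde F^2$, giving $\tilde F\sqrt Q=Q$, hence $\tilde F^2=Q$. This quadratic has the Randers metric of the Zermelo data as its positive root.
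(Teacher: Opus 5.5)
Your proposal is sound, and in its workable form (the geometric fallback in Step~3) it is the paper's proof. Your relation $\|A\|_h^2-\eta\,h(W,A)=\|A\|_h$ is exactly $F(x,A)=1$ for the Matsumoto-type metric $F=\alpha^2/(\alpha-\eta\beta)$. So your lima\c{c}on surface $u\mapsto(1+\eta h(W,u))u$ is the indicatrix the paper builds in Step~I. Your Step~1 is its Step~II equation $F(x,y-(1-\eta)\tilde F_\eta W)=\tilde F_\eta$. Your condition that the origin stays inside is the paper's $F(x,-(1-\eta)W)<1$, which for $\eta<1$ it shows is equivalent to $\|W\|_h<1$.

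The differences lie only in how each step is justified.
\begin{itemize}
\item For the lima\c{c}on, the paper proves strong convexity analytically in \cref{Lema1,Lema2}, via its extension \cref{Prop1} of Yu's criterion. In dimension $n\ge3$ this also covers the directions orthogonal to the plane of $u$ and $W$, through $\phi-s\phi_2=(1-2\eta s)/(1-\eta s)^2>0$. Your argument by meridian curvature plus rotational symmetry would still have to check those directions separately.
\item For the translation, the paper quotes the Zermelo result \cref{Prop3}, which also delivers smoothness of $\tilde F_\eta$.
\item Your primary plan, checking the inequalities of \cref{Prop1} directly on the implicitly defined $\tilde\phi$, is exactly what the paper avoids. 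It applies \cref{Prop1} to $\tilde\phi$ only in the forward direction, later in \cref{Subsec_3.2}.
\end{itemize}

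Two corrections.

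First, the non-degeneracy you need in Step~2 does not reduce to $1+\eta h(W,u)>0$. Take $\eta=\tfrac14$ and $1\le\|W\|_h<2$. Then $1+\eta h(W,u)>0$ and the lima\c{c}on is strongly convex, since $2\eta\|W\|_h<1$. Yet the translated indicatrix passes through or excludes the origin. Some rays from $0$ are then tangent to it or miss it, and the implicit function theorem fails. The hypothesis that makes the branch non-degenerate is $\|W\|_h<1$, together with convexity.

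Second, \eqref{TH_mama} does not determine $\phi$ by itself when $\eta<1$ and $W\neq0$. For $y=cW$ with $c>0$, the value $\tilde F=c/(1-\eta)$ gives $A=0$, and both sides of \eqref{TH_mama} vanish. This is a spurious positive root, different from the true value $c\|W\|_h/(1+\|W\|_h)$. So $\tilde F_\eta$ must be defined as the gauge of the indicatrix, as your Step~1 implicitly does, not as ``the'' positive root. In the case $\eta\equiv0$ this is also what justifies dividing $\tilde F\sqrt{Q}=Q$ by $\sqrt{Q}$: for the gauge, $Q=\tilde F^2\|u\|_h^2>0$.
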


It is worth mentioning that the above theorem now encompasses as the 
particular cases the solutions to Zermelo's navigation problem on Riemannian manifolds in the presence of an arbitrary mild wind $W$ and Matsumoto's slope-of-a-mountain problem under gravity.  Furthermore, the superwind metric $\tilde{F}_{\eta }$ provides new Finsler metrics which extend the general $(\alpha ,\beta )$-metrics. 

To completely answer 
 to the question posed above, we
are interested in finding the time geodesics of the superwind metric which,
restricted to the indicatrix of $\tilde{F}_{\eta },$ provide the time-minimal
paths on $(M,h)$ under the superwind $\mathcal{W}_{{\eta }}$. 
In this respect, we establish the following result 

\begin{theorem}
\label{Thm2} \textnormal{(Time geodesics) }Let $(M,h)$ be an $n$-dimensional
Riemannian manifold $(M,h)$, $n>1$, and $\eta :M\rightarrow \lbrack 0,1]$ be
a $C^{\infty }$-function on $M$. Given a vector field $W$ on $M$, the
time-minimal paths on $(M,h)$ under the action of a superwind $\mathcal{W}_{{%
\eta }}$ as in \eqref{eq_superwind2x} are the time-parametrized solutions $%
\gamma (t)=(\gamma ^{i}(t)),$ $i=1,...,n$ of the ODE system 
\begin{equation}
\ddot{\gamma}^{i}(t)+2\tilde{\mathcal{G}}_{\eta }^{i}(\gamma (t),\dot{\gamma}%
(t))=0,  \label{GGG}
\end{equation}%
where%
\begin{eqnarray*}
\tilde{\mathcal{G}}_{\eta }^{i}(\gamma (t),\dot{\gamma}(t)) &=&\mathcal{G}%
_{\alpha }^{i}(\gamma (t),\dot{\gamma}(t))+\alpha \tilde{Q}s_{0}^{i}-\alpha
^{2}[\tilde{R}(r^{i}+s^{i})+\frac{1}{2}\tilde{P}\eta ^{i}] \\
&&+\{\mathit{\tilde{\Theta}}[-2\alpha \tilde{Q}s_{0}+r_{00}+\alpha ^{2}(2%
\tilde{R}r+\tilde{P}q)]+\alpha \lbrack \mathit{\tilde{\Omega}}(r_{0}+s_{0})+%
\frac{1}{2}\mathit{\tilde{\Lambda}}\eta _{0}]\}\frac{\dot{\gamma}^{i}(t)}{%
\alpha } \\
&&-\{\mathit{\tilde{\Psi}}[-2\alpha \tilde{Q}s_{0}+r_{00}+\alpha ^{2}(2%
\tilde{R}r+\tilde{P}q)]+\alpha \lbrack \mathit{\tilde{\Pi}}%
r_{0}(r_{0}+s_{0})+\frac{1}{2}\mathit{\tilde{\Upsilon}}\eta _{0}]\}W^{i},
\end{eqnarray*}%
with%
\begin{equation*}
\begin{array}{l}
\mathcal{G}_{\alpha }^{i}(\gamma (t),\dot{\gamma}(t))=\left. \frac{1}{4}%
 \left[ h^{im} \left( 2\frac{\partial h_{jm}}{\partial x^{k}}-\frac{\partial
h_{jk}}{\partial x^{m}}\right) \right] \right \vert _{\gamma (t)}\dot{\gamma}^{j}(t)%
\dot{\gamma}^{k}(t), \\ 
\\ 
\eta _{k}=\left. \frac{\partial \eta }{\partial x^{k}}\right\vert _{\gamma
(t)},\qquad \eta _{0}=\eta _{k} \dot{\gamma}^{k}(t),\qquad \eta ^{i}=\left. h^{ij} \right\vert _{\gamma
(t)}\eta
_{j},\qquad q=-W_{k}\eta ^{k}, \\ 
~ \\ 
r_{00}=-W_{j|k}\dot{\gamma}^{j}(t)\dot{\gamma}^{k}(t),\qquad r_{0}=\frac{1}{2%
}(W_{j|k}+W_{k|j})\dot{\gamma}^{j}(t)W^{k},\qquad
r^{i}+s^{i}=\left. h^{ij} \right\vert _{\gamma
(t)}W_{k|j}W^{k}, \\ 
\\ 
r=-W_{j|k}W^{j}W^{k},\qquad s_{0}=-\frac{1}{2}(W_{j|k}-W_{k|j})\dot{\gamma}%
^{j}(t)W^{k},\qquad s_{0}^{i}=-\frac{1}{2}\left. h^{ij} \right\vert _{\gamma
(t)}(W_{j|k}-W_{k|j})\dot{\gamma}%
^{k}(t), \\ 
~ \\ 
\tilde{R}=\frac{1-\eta }{2\alpha ^{4}\tilde{B}}(\alpha ^{2}\tilde{B}+\eta
),\qquad \tilde{Q}=\frac{\tilde{A}}{\alpha \tilde{B}},\qquad \tilde{P}=\frac{%
1}{2\alpha ^{4}\tilde{B}}[\beta (1-\alpha ^{2}\tilde{B})+(1-2\eta -\alpha
^{2}\tilde{B})||W||_{h}^{2}], \\ 
\\ 
\mathit{\tilde{\Theta}}=\frac{\alpha }{2\tilde{E}}(\alpha ^{6}\tilde{A}%
\tilde{B}^{2}-\eta ^{2}\beta ),\qquad \mathit{\tilde{\Psi}}=\frac{\alpha ^{2}%
}{2\tilde{E}}(\alpha ^{4}\tilde{A}^{2}\tilde{B}+\eta ^{2}), \\ 
~ \\ 
\mathit{\tilde{\Omega}}=\frac{1-\eta }{\alpha ^{2}\tilde{B}\tilde{E}}%
[(\alpha ^{2}\tilde{B}+\eta )(\alpha ^{6}\tilde{B}^{3}+\eta
^{2}||W||_{h}^{2})-\eta ^{2}\alpha ^{2}(\beta \tilde{B}+||W||_{h}^{2}\tilde{A%
})], \\ 
~ \\ 
\mathit{\tilde{\Pi}}=\frac{1-\eta }{2\alpha ^{3}\tilde{B}\tilde{E}}\{(\alpha
^{2}\tilde{B}+\eta )(2\alpha ^{6}\tilde{A}\tilde{B}^{2}-\eta ^{2}\beta
)+\eta ^{2}\alpha ^{2}\tilde{B}[2\alpha ^{2}+(1-\eta )\beta ]\}, \\ 
\\
\mathit{\tilde{\Lambda}}=\frac{1}{\alpha ^{2}\tilde{B}\tilde{E}}[\beta
(1-\alpha ^{2}\tilde{B})+(1-2\eta -\alpha ^{2}\tilde{B})||W||_{h}^{2}](%
\alpha ^{6}\tilde{B}^{3}+\eta ^{2}||W||_{h}^{2}) \\~\\
\qquad -\frac{1}{\tilde{B}\tilde{E}}\{[\frac{1}{2}\alpha ^{4}\tilde{B}%
^{2}(1-\alpha ^{2}\tilde{B})+\eta \lbrack \beta +(1-2\eta
)||W||_{h}^{2}]\}(\beta \tilde{B}+||W||_{h}^{2}\tilde{A}), \\ 
\\ 
\mathit{\tilde{\Upsilon}}=\frac{1}{\alpha ^{3}\tilde{B}\tilde{E}}[\beta
(1-\alpha ^{2}\tilde{B})+(1-2\eta -\alpha ^{2}\tilde{B})||W||_{h}^{2}][%
\alpha ^{6}\tilde{A}\tilde{B}^{2}+\eta (\alpha ^{3}\tilde{C}-\eta \beta )]
\\~\\
\qquad +\frac{\alpha ^{2}}{\tilde{E}}\tilde{C}[\frac{1}{2}\alpha ^{2}\tilde{B%
}(1-\alpha ^{2}\tilde{B})+\eta (\beta +||W||_{h}^{2})], \\ 
~ \\ 
\end{array}
\end{equation*}
\begin{equation}
 \begin{array}{l}
\tilde{A}=-\frac{1}{\alpha ^{2}}\{(1-\eta )\left[ 1-(2-\eta )||W||_{h}^{2}%
\right] -(2-\eta )^{2}\beta -(2-\eta )\alpha ^{2}\}, \\ 
~ \\ 
\tilde{B}=-\frac{1}{\alpha ^{2}}\{[1-2(1-\eta )||W||_{h}^{2}]-2(2-\eta )\beta -2\alpha ^{2}\}, \\ 
~ \\ 
\tilde{C}=\frac{1}{\alpha }\left( \alpha ^{2}\tilde{B}+\beta \tilde{A}%
\right) ,\qquad \tilde{E}=\alpha ^{6}\tilde{B}\tilde{C}^{2}+(||W||_{h}^{2}%
\alpha ^{2}-\beta ^{2})(\alpha ^{4}\tilde{A}^{2}\tilde{B}+\eta ^{2}),%
\end{array}
\label{geo_tilde}
\end{equation}%
$\alpha =\alpha (\gamma (t),\dot{\gamma}(t)),$ $\beta =\beta (\gamma (t),%
\dot{\gamma}(t))$ and the components $W^{i}$ of $W$ and $W_{k}=h_{ik}W^{i}$
are evaluated at $\gamma (t)$.
\end{theorem}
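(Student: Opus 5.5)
By Theorem~\ref{Thm1}, the time-minimal paths are the geodesics of the superwind metric $\tilde F_\eta$. It therefore suffices to compute the geodesic spray coefficients $\tilde{\mathcal G}^i_\eta$ of $\tilde F_\eta$ and write the geodesic equation $\ddot\gamma^i+2\tilde{\mathcal G}^i_\eta=0$ in time (unit $\tilde F_\eta$-speed) parametrization.

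The plan is to treat $\tilde F_\eta$ as a generalized $(\alpha,\beta)$-metric. We write $\tilde F_\eta=\alpha\,\phi(x,b^2,s)$ with $s=\beta/\alpha$ and $b^2=\|W\|_h^2$, where $\phi$ is defined implicitly by \eqref{TH_mama}. Dividing \eqref{TH_mama} by $\alpha^2$ gives the implicit relation
\[
\phi\sqrt{1+2(1-\eta)s\phi+(1-\eta)^2b^2\phi^2}=1+(2-\eta)s\phi+(1-\eta)b^2\phi^2 .
\]
The new feature, absent from the general $(\alpha,\beta)$-metrics of \cite{Yu}, is the explicit $x$-dependence of $\phi$ through $\eta(x)$. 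This is exactly what produces the terms containing $\eta_k$, $\eta_0$, $\eta^i$ and $q$.

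\textbf{Step 1: extend the spray formula.} I would start from the Euler--Lagrange expression
\[
\tilde{\mathcal G}^i=\tfrac14 g^{il}\big\{[F^2]_{x^ky^l}y^k-[F^2]_{x^l}\big\}
\]
and redo Yu's computation for $F=\alpha\phi(b^2,s,\eta(x))$. Compared with the known formula, the derivatives $F_{x^k}$ acquire the extra contribution $\alpha\phi_\eta\eta_k$. Carrying this through the standard inversion of $g_{ij}$ should yield
\[
\mathcal G^i_\alpha+\alpha Q s^i_0-\alpha^2R(r^i+s^i)-\tfrac12\alpha^2P\eta^i
\]
plus terms along $y^i/\alpha$ and along $b^i$ (here $b^i=W^i$). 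The coefficients $Q,R,P,\Theta,\Psi,\Omega,\Pi,\Lambda,\Upsilon$ are rational in $\phi$ and in its partial derivatives $\phi_s,\phi_{ss},\phi_{b^2},\phi_{sb^2},\phi_\eta,\phi_{s\eta}$. The $\eta$-terms enter linearly, and $q=-W_k\eta^k$ arises when $b_l$ is contracted with $\eta^l$.

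The sign conventions for $r_{00},r_0,s_0,r,\ldots$ come from using $b_i=-W_i$ (or an equivalent normalization) together with $\|W\|_h^2$ in place of $b^2$; these must be matched carefully against \eqref{NOT}.

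\textbf{Step 2: compute the derivatives of $\phi$.} I would differentiate the implicit relation $\Phi(\phi,s,b^2,\eta)=0$ directly rather than solve it. Implicit differentiation gives
\[
\phi_s=-\frac{\Phi_s}{\Phi_\phi},\qquad \phi_{b^2}=-\frac{\Phi_{b^2}}{\Phi_\phi},\qquad \phi_\eta=-\frac{\Phi_\eta}{\Phi_\phi},
\]
and similarly for the second derivatives. The square root in $\Phi$ can be eliminated using the relation itself, which is why the final expressions are polynomial and rational in $\alpha,\beta,\|W\|^2,\eta$ and the auxiliary $\tilde A,\tilde B,\tilde C,\tilde E$. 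In particular $\tilde B$ and $\tilde A$ should be (up to the factor $\alpha^{-2}$) the combinations appearing in $\Phi_\phi$ and $\Phi_s$ after substituting back $\tilde F_\eta$. I would then substitute these into the coefficients from Step 1 and simplify, re-expressing everything in terms of $\alpha,\beta$ multiplied back by $\alpha$-powers.

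\textbf{Main obstacle.} The algebra in Step 2 is the hard part: simplifying the determinant-type denominator into $\tilde E$ and collecting the $\eta$-derivative terms into $\tilde\Lambda,\tilde\Upsilon$. I would manage it by computing with $\tilde F_\eta$ retained as a symbol, reducing modulo the defining relation \eqref{TH_mama} (for instance with symbolic computation).

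As consistency checks, $\eta\equiv0$ should reproduce the Randers spray and $\eta\equiv1$ the Matsumoto spray. Finally, since the time geodesics have unit $\tilde F_\eta$-length, arc length equals time, which gives the stated time-parametrized system \eqref{GGG}.
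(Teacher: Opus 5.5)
Your route is the paper's. You extend Yu's spray formula to $F=\alpha\phi(b^2,s,\eta)$ (\cref{Prop2}), obtain the derivatives of $\tilde\phi_\eta$ by implicit differentiation (\cref{Lema4,Lema5}, where the paper uses the quartic \eqref{SS3.3} rather than your square-root form), keep $\tilde F_\eta$ as a symbol (\cref{Prop5}), and use unit speed at the end.

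What you leave out is the step the paper isolates as \cref{Lema44}: showing that the denominators never vanish. Without it, the formulas you plan to use are not justified.

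\textbf{Nonvanishing of $C$.} Your $\phi_s=-\Phi_s/\Phi_\phi$ requires $\Phi_\phi\neq0$ in every dimension. Up to a factor that is nonzero at the root, $\Phi_\phi$ is the paper's $C$, not $\tilde B$ as you guess; the two are related by $B=C(\phi-s\phi_2)$.

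\textbf{Nonvanishing of $\phi-s\phi_2$.} The inverse $g^{ij}=\rho^{-1}[\cdots]$ of \cref{Prop0} has $\rho=\phi(\phi-s\phi_2)$. So $R,Q,P,\Pi,\Upsilon$ in \cref{Prop2}, and the factor $\tilde B$ in the denominators of $\tilde R,\tilde Q,\tilde P,\tilde\Omega,\dots$, all require $\phi-s\phi_2\neq0$. For $n\ge3$ strong convexity supplies this through \cref{Prop1}. For $n=2$, which is the setting of all the examples, \cref{Prop1} gives only $\phi-s\phi_2+(b^2-s^2)\phi_{22}>0$, so a separate argument is needed.

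The paper gives an algebraic argument based on \eqref{SS3.3}. Alternatively, both facts have short proofs:
\begin{itemize}
\item For $C\neq0$: the $t$-derivative of $F\big(x,y-(1-\eta)tW\big)-t$ at the root equals $dF_v\big(-(1-\eta)W\big)-1\le F\big(x,-(1-\eta)W\big)-1<0$ by the fundamental inequality. The quartic differs from this function by a factor that is nonzero at the root.
\item For $n=2$: $\tilde\phi_\eta(b^2,s,\eta)$ is the same function in every dimension. So the positivity of $\tilde\phi_\eta-s\tilde\phi_{\eta2}$ obtained for $n\ge3$ (e.g.\ with constant data on $\mathbb{R}^3$) carries over to $n=2$.
\end{itemize}

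Two smaller points. First, \eqref{NOT} gives $b_i=-W_i$ and hence $b^i=-W^i$, not $b^i=W^i$ as you write. This is why the $b^i$-term of \cref{Prop2} appears as $-\{\cdots\}W^i$ in the statement.

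Second, unit speed does more than identify arc length with time. The $\tilde A,\tilde B,\tilde C,\tilde E,\tilde R,\dots$ of the statement are the $A,B,C,E,R,\dots$ of \cref{Prop5} with $\tilde F_\eta$ replaced by $1$. That substitution is legitimate only along $\tilde F_\eta$-unit-speed curves. The displayed $\tilde{\mathcal G}^i_\eta$ are therefore not the $2$-homogeneous spray coefficients on $TM_0$, and you should state the substitution explicitly.
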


\ 


The paper is structured as follows. First, in \cref{Subsec_2.1} we present some
general notions and important aspects regarding Riemann-Finsler geometry
that we need to prove the main results. Then, in \cref{Subsec_2.2} we introduce
an extension of the general $(\alpha ,\beta )$-metrics that plays
a key role in solving the raised problem on a non-uniform slope under the influence of superwind. This is required by the fact that the counterbalance to the transverse component of the superwind, with respect to the direction of motion $u$, produces a varying lateral drift which is controlled by a $C^{\infty }$-function $\eta :M\rightarrow \lbrack 0,1]$. \cref{Subsec_3.1} is devoted to the proof of \cref{Thm1}, which, in turn, is divided into two steps. The first one performs an anisotropic deformation of the background Riemannian metric $h$ by
the vector field $\eta(x)\mathcal{W}_{MAT}$, the outcome being a Matsumoto metric $F(x,y)=%
\frac{\alpha ^{2}}{\alpha -\eta(x)\beta }$ together with other properties of its indicatrix  (\cref{Lema1,Lema2}). The second step deals with the Zermelo navigation (\cite{CJS,SH}), where  the indicatrix of the Finsler metric $F$ provided in the first step or, in particular, the Riemannian metric $h$ if $%
\eta(x) =0$ for some $x \in M$ is rigidly translated by the scaled vector field $(1-\eta(x) )W,$ assuming that $F(x,-(1-\eta(x) )W)<1.$ In this way, we achieve the superwind metric $\tilde{F}_{\eta }$ as a Finsler metric  being an extension of general $(\alpha, \beta)$-metrics (\cref{Lema3}). In \cref{Subsec_3.2}, as a direct application of the results established in \cref{Subsec_2.2}, we prove \cref{Thm2}. In fact, the proof relies on the implicit expression of the superwind metric together with quite technical results, included in  \cref{PropXX,Lema4,Lema44,Lema5,Prop5}, and the key argument that any geodesic of the superwind metric has unit 
$\tilde{F}_{\eta }$-length because it is a trajectory in the Zermelo navigation technique  developed in the second step. In \cref{Sec_4}, 
we carry out a $2$-dimensional study with a few
examples, comparing the impact of various superwinds' effects  combined with a uniform or a non-uniform slippery slopes, analyzing the behaviour of the corresponding time geodesics and evolution of time fronts.

\section{Preliminaries}

\label{Sec_2}We first briefly collect those notions and general facts from
Riemann-Finsler geometry that are needed for presenting and proving our
aforementioned results; for more details, see, e.g. \cite%
{chern_shen,B-Miron,BRS,SH,Yu,Kristaly,CJS,Musznay,mat2}. Second,
we extend the general $(\alpha ,\beta )$-metrics to a convenient Finsler
metric which serves as a main tool for our study.

\subsection{Finsler manifolds}

\label{Subsec_2.1}Let $M$ be an $n$-dimensional $C^{\infty }$-manifold, with $%
n>1,$ and let $(x^{i}),$ $i=1,...,n$ be the local coordinate system on a
local chart in $x\in M$. Given $T_{x}M$ the tangent space at $x\in M,$ let $%
TM=\underset{x\in M}{\cup }T_{x}M$ be the tangent bundle which is itself a $%
C^{\infty }$-manifold. For every $y\in T_{x}M$, one has $y=y^{i}\frac{%
\partial }{\partial x^{i}},$ where $\left\{ \frac{\partial }{\partial x^{i}}%
\right\} ,$ $i=1,...,n$ $\ $denotes the natural basis and the coordinates on
a local chart in $(x,y)\in TM$ are denoted by $(x^{i},y^{i}),$ $i=1,...,n$.

A natural generalization of a Riemannian metric $h$ on $M$ is a \textit{%
Finsler metric }denoted by $F.$ Actually, the pair $(M,F)$ is a Finsler
manifold if $F:TM\rightarrow \lbrack 0,\infty )$ is a continuous function
with the following properties:

i) $F$ is a $C^{\infty }$-function on the slit tangent bundle $%
TM_{0}=TM\backslash \{0\}$;

ii) $F$ is positively homogeneous of degree one with respect to $y$, i.e., $%
F(x,{c}y)={c}F(x,y)$, for all ${c}>0$;

iii) the Hessian $g_{ij}(x,y)=\frac{1}{2}[F^{2}]_{y^{i}y^{j}},$ where $%
[F^{2}]_{y^{i}y^{j}}=\frac{\partial ^{2}F^{2}}{\partial y^{i}\partial y^{j}}%
, $ is positive definite for all $(x,y)\in TM_{0}.$

\noindent Throughout the paper, we use $I_{F}$ to denote the
indicatrix of $F$, i.e.,%
\begin{equation*}
I_{F}=\left\{ (x,y)\in TM\text{ }|\text{ }F(x,y)=1\right\} .
\end{equation*}%
We notice that the property iii) refers to the fact that $I_{F}$ is strongly
convex. In particular, the Finsler metric $F$ is a Riemannian metric if and
only if $g_{ij}(x,y)$ does not depend on $y,$ i.e., $g_{ij}(x,y)=g_{ij}(x).$
When the domain of $F$ is not the whole $TM$ and the conditions i), ii) and iii) are satisfied only on a conic open subset $\mathcal{A}$ of $TM$  (i.e., if $%
y\in \mathcal{A}_{x},$ then $c y$ $\in \mathcal{A}_{x}$ for every $%
c >0$, where  for each $x\in M,$ 
$\mathcal{A}_{x}=\mathcal{A\ \cap \ }T_{x}M$; see \cite{CJS,JS}), $F$ is called a \textit{conic
Finsler metric}.

A smooth vector field on $TM_{0},$ locally expressed as $S=y^{i}\frac{%
\partial }{\partial x^{i}}-2\mathcal{G}^{i}\frac{\partial }{\partial y^{i}},$
where $\mathcal{G}^{i}=\mathcal{G}^{i}(x,y),$ $i=1,...,n$ are positively\
homogeneous of degree two with respect to $y,$ i.e., $\mathcal{G}^{i}(x,{c}y)=%
{c}^{2}\mathcal{G}^{i}(x,y),$ for all ${c}>0,$ is called a \textit{spray} on 
$M.$  We notice that the functions $\mathcal{G}^{i}$ are called the \textit{%
spray coefficients} \cite{chern_shen}. In the case where the spray is
induced by a Finsler metric ${F}^{2}{=}g_{ij}(x,y)y^{i}y^{j}$, the spray
coefficients read as 
\begin{equation}
\mathcal{G}^{i}(x,y)=\frac{1}{4}g^{il}\{[{F}^{2}]_{x^{k}y^{l}}y^{k}-[{F}%
^{2}]_{x^{l}}\}=\frac{1}{4}g^{il}\left( 2\frac{\partial g_{jl}}{\partial
x^{k}}-\frac{\partial g_{jk}}{\partial x^{l}}\right) y^{j}y^{k},  \label{S1}
\end{equation}%
$(g^{il})$ denoting the inverse matrix of $(g_{il})$.

Given a regular piecewise $C^{\infty }$-curve on $M,$ $\gamma
:[0,1]\rightarrow M$, $\gamma (t)=(\gamma ^{i}(t)),$ $i=1,...,n,$ let $\dot{%
\gamma}(t)=\frac{d\gamma }{dt}$ denote the velocity vector of $\gamma .$ If $%
\dot{\gamma}(t)$ is parallel along $\gamma (t)$, i.e., in the local
coordinates, $\gamma ^{i}(t),$ $i=1,...,n$ are the solutions of the ODE
system%
\begin{equation}
\ddot{\gamma}^{i}(t)+2\mathcal{G}^{i}(\gamma (t),\dot{\gamma}(t))=0,
\label{geo1}
\end{equation}%
then $\gamma $ is called an $F$\textit{-geodesic.}

Zermelo's navigation, apart from the fact that it is a classical optimal
control problem, serves as a method to construct a new Finsler metric, by perturbing a
given Finsler metric (being called the background metric), making use of a vector field $W,$
i.e., a time-independent wind on a manifold $M$, under some constraints. As
already emphasized, by considering that the background metric is a
Riemannian one, the Randers metric solves Zermelo's problem
of navigation in the case of a weak wind $W,$ i.e., $||W||_{h}<1$ \cite%
{BRS,chern_shen}. When $W$ is a critical wind, i.e., $||W||_{h}=1,$
the problem is solved by the Kropina metric \cite{Y-Sabau}. Throughout the
paper, we need the following result (see \cite[Lemma 3.1]{SH}, \cite[%
Proposition 2.14]{CJS}, \cite[Lemma 1.4.1]{chern_shen}).

\begin{proposition}
\label{Prop3} Let $(M,F)$ be a Finsler manifold and $W$ a vector field on $M$
such that $F(x,-W)<1$. Then the solution of the Zermelo's navigation problem
with the navigation data $(F,W)$ is a Finsler metric $\tilde{F}$ obtained by
solving the equation%
\begin{equation}
F(x,y-\tilde{F}(x,y)W)=\tilde{F}(x,y),\text{ }  \label{MAIN}
\end{equation}%
for any nonzero $y\in T_{x}M$, $x\in M.$
\end{proposition}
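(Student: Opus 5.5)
The plan is to work pointwise in each tangent space $T_{x}M$, read the Zermelo construction as a rigid translation of the indicatrix, and recover $\tilde{F}$ as the Minkowski functional of the translated body. Fix $x\in M$. A craft with own velocity $u$, $F(x,u)=1$, moves with resultant velocity $v=u+W$. The set of velocities reachable in unit time is therefore $\tilde{I}_{x}=I_{F}\cap T_{x}M+W(x)$. We want $\tilde{F}(x,\cdot)$ to be the function whose indicatrix is $\tilde{I}_{x}$. For $y\neq 0$ and $\lambda>0$ we have $y/\lambda\in\tilde{I}_{x}$ iff $F(x,y/\lambda-W)=1$. By positive homogeneity this is $F(x,y-\lambda W)=\lambda$, which is exactly \eqref{MAIN} with $\lambda=\tilde{F}(x,y)$. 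So the statement reduces to three facts: \eqref{MAIN} has a unique positive solution for each $y\neq 0$; the solution depends smoothly on $(x,y)$; and it is a Finsler metric whose geodesics are the time-minimal paths.

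\textbf{Existence and uniqueness.} Set $\varphi(\lambda)=F(x,y-\lambda W)-\lambda$ for $\lambda\geq 0$. Then $\varphi(0)=F(x,y)>0$. For large $\lambda$,
\[
\varphi(\lambda)=\lambda\bigl(F(x,y/\lambda-W)-1\bigr),
\]
and $F(x,y/\lambda-W)-1\to F(x,-W)-1<0$, so $\varphi(\lambda)\to-\infty$. Since $F(x,\cdot)$ is convex (triangle inequality plus homogeneity), $\varphi$ is convex. A convex function that is positive at $0$ and tends to $-\infty$ has exactly one positive zero, which defines $\tilde{F}(x,y)>0$. Homogeneity $\tilde{F}(x,cy)=c\tilde{F}(x,y)$ then follows from uniqueness, and we set $\tilde{F}(x,0)=0$.

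\textbf{Smoothness.} Apply the implicit function theorem to $\Phi(x,y,\lambda)=F(x,y-\lambda W)-\lambda$ on $TM_{0}\times(0,\infty)$. At a root, put $z=y-\lambda W$, so $F(x,z)=\lambda>0$. Then
\[
\partial_{\lambda}\Phi=-F_{y^{i}}(x,z)W^{i}-1 .
\]
The fundamental inequality $F_{y^{i}}(x,z)v^{i}\leq F(x,v)$, which comes from Cauchy--Schwarz for $g_{z}$, applied to $v=-W$ gives $\partial_{\lambda}\Phi\leq F(x,-W)-1<0$. Hence $\tilde{F}$ is $C^{\infty}$ on $TM_{0}$.

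\textbf{Strong convexity.} This is the step I expect to be the main obstacle. I would argue geometrically. The hypothesis $F(x,-W)<1$ says that $0$ lies in the interior of the translated body $\{F<1\}+W$. The indicatrix of $\tilde{F}$ is a translate of $I_{F}$, so it has the same second fundamental form and is therefore strongly convex. I would then invoke the standard equivalence: for a smooth positively homogeneous function, $g_{ij}=\frac12[\tilde{F}^{2}]_{y^{i}y^{j}}$ is positive definite iff the indicatrix is a strongly convex hypersurface enclosing the origin. If needed, this can be checked directly by differentiating \eqref{MAIN} twice in $y$. With $\tilde{g}$ the Hessian of $\tilde{F}^{2}$, the relation between $\tilde{g}$ and $g_{z}$ is through the nonvanishing factor $1+F_{y^{i}}(z)W^{i}$.

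\textbf{Time-minimality.} For any admissible trajectory, decompose $\dot{\gamma}=u+W$ with $u$ scaled from the unit own velocity. The travel time of $\gamma$ equals $\int\tilde{F}(\gamma,\dot{\gamma})\,dt$, because unit $\tilde{F}$-speed is exactly maximal own speed. Hence time-minimal paths are, locally, the $\tilde{F}$-geodesics.
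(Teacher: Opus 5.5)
Your argument is correct. It is essentially the standard proof that the paper takes over by citation (\cite[Lemma 3.1]{SH}, \cite[Proposition 2.14]{CJS}, \cite[Lemma 1.4.1]{chern_shen}): the indicatrix of $\tilde{F}$ is the $W$-translate of $I_F$, \eqref{MAIN} has a unique positive root because $F(x,-W)<1$, strong convexity passes to the translate with $0$ in its interior, and trajectories have unit $\tilde{F}$-speed. One small correction: the inequality $F_{y^{i}}(x,z)v^{i}\leq F(x,v)$ follows from convexity of $F(x,\cdot)$ together with Euler's relation, i.e.\ the supporting hyperplane at $z$. It does not follow from Cauchy--Schwarz for the inner product $g_{z}$, which only gives $F_{y^{i}}(x,z)v^{i}\leq\sqrt{g_{z}(v,v)}$, and that bound is not $F(x,v)$ in general.
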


\noindent It is worth pointing out that \eqref{MAIN} admits a unique
positive solution $\tilde{F}$ for any nonzero $y\in T_{x}M,$  due to the
fact that the indicatrix $I_{F}$ is strongly convex and the inequality $F(x,-W)<1$ is assumed   \cite{CJS,SH}. Moreover, $F(x,-W)<1$ also assures the
fact that $\tilde{F}$ is a Finsler metric, having the indicatrix $I_{\tilde{F%
}}=\left\{ (x,y)\in TM \text{ \ }|\text{ }\tilde{F}(x,y)=1\right\} $
strongly convex, as well as, for any $x\in M$, $y=0$ belongs to the region
bounded by $I_{\tilde{F}}$; for more details, see \cite{CJS}. Over and above
that, any regular piecewise $C^{\infty }$-curve $\gamma :[0,1]\rightarrow M$%
, parametrized by time, that represents a trajectory in Zermelo's navigation
problem, has unit $\tilde{F}$-length, i.e., $\tilde{F}(\gamma (t),\dot{\gamma}%
(t))=1,$ \cite[Lemma 1.4.1]{chern_shen}.

\newpage

\subsection{An extension of the general $(\protect\alpha ,\protect\beta )$%
-metrics}

\label{Subsec_2.2}Let $\alpha ^{2}=a_{ij}(x)y^{i}y^{j}$ be a quadratic form,
where $a_{ij}(x)$ is a Riemannian metric on $M$. For further use, let $%
\alpha $ be called a Riemannian metric. Given a differential $1$-form $\beta
=b_{i}(x)dx^{i}$ on $M,$ simply used for $\beta =b_{i}y^{i}$, a Finsler
metric $F$ is called \textit{general} $(\alpha ,\beta )$-\textit{metric }if
it can be written as $F=\alpha \phi (b^{2},s)$, where $\phi (b^{2},s)$ is a
positive $C^{\infty }$-function in the variables $b^{2}=||\beta ||_{\alpha
}^{2}=a^{ij}b_{i}b_{j}$ and $s=\frac{\beta }{\alpha }$, with $|s|\leq
b<b_{0} $ and $0<b_{0}\leq \infty $ (see \cite{Yu}). Various examples of
general $(\alpha ,\beta )$-metrics are provided by the \textit{slippery
slope metrics} with one or two parameters, which have been described
recently in \cite{slippery,cross,slipperyx,general}. In particular, if $\phi $
depends only on the variable $s$, the function $F=\alpha \phi (s)$ is a $%
(\alpha ,\beta )$-\textit{metric}, often exemplified by the Randers metric $%
F=\alpha +\beta ,$ with $\phi (s)=1+s,$ which carries out the solution to
Zermelo's navigation problem under the influence of a weak wind, i.e., $%
|s|\leq b<1$ \cite{chern_shen}, or by the Matsumoto metric $F=\frac{\alpha
^{2}}{\alpha -\beta },$ with $\phi (s)={\frac{1}{1-s}}$ and $|s|\leq b<{%
\frac{1}{2}},$ which solves Matsumoto's slope-of-a-mountain problem \cite%
{matsumoto}.

For the purpose of the present paper, a general $(\alpha ,\beta )$-metric $%
F=\alpha \phi (b^{2},s)$ cannot serve as a toolkit because of the additional
variable $\eta =\eta (x),$ $x\in M$, provided by a non-constant traction,
which is assumed in the proposed navigation problem, denoted by $\mathcal{P}_{\eta }$. This
requires an extension of $F=\alpha \phi (b^{2},s)$ to a Finsler metric $%
F=\alpha \phi (b^{2},s,\eta ),$ with $\phi (b^{2},s,\eta )$ a positive $%
C^{\infty }$-function in the variables $b^{2},$ $s=\frac{\beta }{\alpha }$
and $\eta $, assuming that $\eta :M\rightarrow \lbrack 0,1]$ is a $%
C^{\infty }$-function on $M.$ Although the variables $b^{2}$ and $\eta $
both depend on $x\in M,$ we take them separately, being more convenient for
computations. Several geometric properties can be studied for this extension. However, we focus only
on some basic results regarding $F=\alpha \phi (b^{2},s,\eta )$  which include the necessary and sufficient conditions for being a Finsler metric as well as an
explicit formula for the spray coefficients which corresponds to  $F=\alpha \phi (b^{2},s,\eta )$.

In order to do this, let $\phi _{1},$ $\phi _{2}$ and $\phi _{3}$ be the
differentiations of the function $\phi $ with respect to the first variable $%
b^{2},$ the second variable $s$ and the third variable $\eta $,
respectively. Similarly, throughout the paper $\phi _{12},$ $\phi _{22}$ and $\phi _{32}$ correspondingly denote the differentiations of $\phi _{1},$ $\phi _{2}$ and $\phi _{3}$ with
respect to $s$. To present the desired results, we
also need to recall and fix the following notations and properties: 
\begin{equation}
\begin{array}{l}
r_{ij}=\frac{1}{2}(b_{i|j}+b_{j|i}),\quad r_{i}=b^{j}r_{ji},\quad
r^{i}=a^{ij}r_{j},\quad r_{00}=r_{ij}y^{i}y^{j},\quad r_{0}=r_{i}y^{i},\quad
r=b^{i}r_{i},\quad \\ 
~ \\ 
s_{ij}=\frac{1}{2}(b_{i|j}-b_{j|i}),\quad s_{i}=b^{j}s_{ji},\quad
s^{i}=a^{ij}s_{j},\quad s_{0}^{i}=a^{ij}s_{jk}y^{k},\quad s_{0}=s_{i}y^{i},
\\ 
\\ 
\lbrack \alpha ^{2}]_{y^{i}}=y_{i}=a_{ij}y^{j},\text{ \ \ }%
[b^{2}]_{x^{i}}=2(r_{i}+s_{i}),\text{ \ \ \ }s_{y^{i}}=\alpha ^{-2}(\alpha
b_{i}-sy_{i}), \\ 
\\ 
\eta _{x^{i}}=\eta _{i},\text{ \ \ \ }\eta _{0}=\eta _{i}y^{i},\text{ \ \ }%
\eta ^{i}=a^{ij}\eta _{j},\text{ \ \ \ }q=b^{i}\eta _{i},%
\end{array}
\label{rs}
\end{equation}%
with $\eta _{x^{i}}=\frac{\partial \eta }{\partial x^{i}},$ $b^{j}=a^{ji}b_{i}$, $b_{i|j}=\frac{\partial b_{i}}{\partial x^{j}}%
-\Gamma _{ij}^{k}b_{k}$ and $\Gamma _{ij}^{k}=\frac{1}{2}a^{km}\left( \frac{%
\partial a_{jm}}{\partial x^{i}}+\frac{\partial a_{im}}{\partial x^{j}}-%
\frac{\partial a_{ij}}{\partial x^{m}}\right)$ being the Christoffel
symbols of the Riemannian metric $a_{ij}$, where $(a^{ij})=(a_{ij})^{-1}.$ We
point out that the differential $1$-form $\beta $ is closed if and only if $%
s_{ij}=0$ (see \cite{chern_shen}).

Moreover, since $b^{2}$ and $\eta $ do not depend on $y,$ the expression for
the Hessian $g_{ij}(x,y)=\frac{1}{2}[F^{2}]_{y^{i}y^{j}}$ and some direct
computations and properties concerning $F=\alpha \phi (b^{2},s,\eta )$ can
be extracted from \cite{Yu,chern_shen}. More precisely, owing to \cite[%
Propositions 3.2 and 3.3]{Yu}, one can similarly prove the following two results:

\begin{proposition}
\label{Prop0}Let $M$ be an $n$-dimensional $C^{\infty }$-manifold, $n>1$%
. The aforementioned function $F=\alpha \phi (b^{2},s,\eta )$ holds the
following statements:
\begin{itemize}
\item[i)]  $g_{ij}=\rho a_{ij}+\rho _{0}b_{i}b_{j}+\rho _{1}(b_{i}\alpha
_{y^{j}}+b_{j}\alpha _{y^{i}})-s\rho _{1}\alpha _{y^{i}}\alpha _{y^{j}}$,
where $\rho =\phi (\phi -s\phi _{2}),$  $\rho _{0}=\phi \phi _{22}+(\phi
_{2})^{2}$\\~\\  and $\rho _{1}=(\phi -s\phi _{2})\phi _{2}-s\phi \phi _{22};$

\item[ii)] $\det (g_{ij})=\phi ^{n+1}(\phi -s\phi _{2})^{n-2}[\phi -s\phi
_{2}+(b^{2}-s^{2})\phi _{22}]\det (a_{ij});$

\item[iii)] $g^{ij}=\rho ^{-1}[a^{ij}+\tau b^{i}b^{j}+\alpha
^{-1}\tau _{0}(b^{i}y^{j}+b^{j}y^{i})+\alpha ^{-2}\tau _{1}y^{i}y^{j}],$ if $\det
(g_{ij})\neq 0,$ where 
\begin{equation*}
\tau =\frac{-\phi _{22}}{\phi -s\phi _{2}+(b^{2}-s^{2})\phi _{22}},\text{ \ }%
\tau _{0}=\frac{-[(\phi -s\phi _{2})\phi _{2}-s\phi \phi _{22}]}{\phi \lbrack
\phi -s\phi _{2}+(b^{2}-s^{2})\phi _{22}]},\text{ \ }\tau _{1}=\frac{-[s\phi
+(b^{2}-s^{2})\phi _{2}]}{\phi }\tau _{0}.
\end{equation*}
\end{itemize}
\end{proposition}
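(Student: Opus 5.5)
The plan is to reduce everything to the classical computation for $(\alpha,\beta)$-type metrics. The key observation is that $b^{2}$ and $\eta$ depend only on $x$, so at each fixed point $x\in M$ they are constants as far as $y$-differentiation is concerned. The function $F=\alpha\phi(b^{2},s,\eta)$ then behaves, fibrewise, exactly like an $(\alpha,\beta)$-metric $\alpha\varphi(s)$ with $\varphi(s)=\phi(b^{2}(x),s,\eta(x))$. All $y$-derivatives of $\phi$ involve only $\phi_{2}$ and $\phi_{22}$, that is, derivatives in the variable $s$. Hence each of i)--iii) follows from the pointwise computation in \cite[Propositions 3.2 and 3.3]{Yu} and \cite{chern_shen}, with $\eta$ carried along as a frozen parameter.

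For i), I will compute $\frac12[F^{2}]_{y^{i}y^{j}}$ directly. Write $F^{2}=\alpha^{2}\phi^{2}$. Using $[\alpha^{2}]_{y^{i}}=2y_{i}$, $\alpha_{y^{i}}=y_{i}/\alpha$ and $s_{y^{i}}=\alpha^{-2}(\alpha b_{i}-sy_{i})=\alpha^{-1}(b_{i}-s\alpha_{y^{i}})$ from \eqref{rs}, the first derivative is
\[
\tfrac12[F^{2}]_{y^{i}}=\alpha\phi\,\big(\phi\alpha_{y^{i}}+\phi_{2}(b_{i}-s\alpha_{y^{i}})\big)=\phi(\phi-s\phi_{2})\,y_{i}+\alpha\phi\phi_{2}\,b_{i}.
\]
Differentiating once more, using $\alpha_{y^{i}y^{j}}=\alpha^{-1}(a_{ij}-\alpha_{y^{i}}\alpha_{y^{j}})$, and collecting the coefficients of $a_{ij}$, $b_ib_j$, $b_i\alpha_{y^j}+b_j\alpha_{y^i}$ and $\alpha_{y^i}\alpha_{y^j}$ produces $\rho,\rho_{0},\rho_{1}$ and $-s\rho_{1}$. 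This is routine bookkeeping, and $\phi_{1},\phi_{3}$ never appear.

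For ii), the matrix $g_{ij}$ is a rank-two perturbation of $\rho a_{ij}$ in the span of $b$ and $y$. I will use the standard determinant lemma $\det(a+\text{low rank})=\det(a)\det(I_{2}+\dots)$, or equivalently follow the lemma in \cite{chern_shen}. The $2\times2$ determinant reduces to the scalar $\phi^{n+1}(\phi-s\phi_{2})^{n-2}[\phi-s\phi_{2}+(b^{2}-s^{2})\phi_{22}]$ once the identities $a^{ij}b_{i}b_{j}=b^{2}$, $a^{ij}b_{i}\alpha_{y^{j}}=s$ and $a^{ij}\alpha_{y^{i}}\alpha_{y^{j}}=1$ are inserted.

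For iii), I will propose the stated ansatz for $g^{ij}$ and check $g_{ij}g^{jk}=\delta_{i}^{k}$. This amounts to matching the coefficients of $\delta_{i}^{k}$, $b_{i}b^{k}$, $b_{i}y^{k}$, $y_{i}b^{k}$ and $y_{i}y^{k}$, which gives linear equations in $\tau,\tau_{0},\tau_{1}$. These have the stated unique solution whenever $\det(g_{ij})\neq0$, i.e. when $\phi-s\phi_2+(b^2-s^2)\phi_{22}\neq0$. The main obstacle is only the algebraic verification of these coefficient identities, in particular the $y_{i}y^{k}$ term. I would shortcut it by invoking the Sherman–Morrison–Woodbury formula for the rank-two update, rather than verifying by brute force.
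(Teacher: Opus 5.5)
Your proposal is correct and follows the paper's own route. The paper likewise notes that $b^{2}$ and $\eta$ do not depend on $y$, and then imports the fibrewise Hessian, determinant and inverse computations from \cite[Propositions 3.2 and 3.3]{Yu}; your explicit first-derivative formula and the rank-two determinant and inverse arguments simply fill in what the paper cites.

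One precision for iii). The ansatz, and the Sherman--Morrison--Woodbury step (which needs $\rho a_{ij}$ invertible), require $\rho\neq0$, i.e.\ $\phi-s\phi_{2}\neq0$, in addition to $\phi-s\phi_{2}+(b^{2}-s^{2})\phi_{22}\neq0$. For $n\geq3$ both follow from $\det(g_{ij})\neq0$ by ii). For $n=2$, however, the factor $(\phi-s\phi_{2})^{n-2}$ disappears, so your ``i.e.'' is not an equivalence and $\rho\neq0$ must be assumed separately; the paper does exactly this for the superwind metric in Lemma~\ref{Lema44}.
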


\begin{proposition}
\label{Prop1}Let $M$ be an $n$-dimensional $C^{\infty }$-manifold, $n>1$%
. The aforementioned function $F=\alpha \phi (b^{2},s,\eta )$ is a Finsler metric for any
Riemannian metric $\alpha ,$ differential $1$-form $\beta $ with $||\beta
||_{\alpha }<b_{0}$ and $C^{\infty }$-function $\eta :M\rightarrow \lbrack
0,1]$ if and only if $\phi =\phi (b^{2},s,\eta )$ is a positive $C^{\infty }$%
-function supporting the following inequalities:%
\begin{equation*}
\phi -s\phi _{2}>0,\text{ \ \ }\phi -s\phi _{2}+(b^{2}-s^{2})\phi _{22}>0,
\end{equation*}%
when $n\geq 3$ or%
\begin{equation*}
\phi -s\phi _{2}+(b^{2}-s^{2})\phi _{22}>0,
\end{equation*}%
when $n=2$, while $s=\frac{\beta }{\alpha }$ and $b=||\beta ||_{\alpha }$
hold $|s|\leq b<b_{0}$.
\end{proposition}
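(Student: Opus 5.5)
The statement to prove is Proposition \ref{Prop1}. I would reduce it to the classical case $F=\alpha\phi(b^2,s)$ treated in \cite[Proposition 3.3]{Yu}. The key observation is that $b^{2}$ and $\eta$ depend only on $x$. Hence, at a fixed point $x\in M$, the function $F(x,\cdot)=\alpha\phi(b^2(x),s,\eta(x))$ is, on $T_xM$, an $(\alpha,\beta)$-norm with profile $\phi_x(s):=\phi(b^2(x),s,\eta(x))$. Positive homogeneity and smoothness on $TM_0$ follow at once from the fact that $\phi$ is a positive $C^\infty$-function and $s=\beta/\alpha$ is $0$-homogeneous and smooth on $TM_0$. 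The whole proof therefore rests on strong convexity, i.e. property iii), which is a pointwise condition in $y$.

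For sufficiency I use Proposition \ref{Prop0}. Its formulas involve only $y$-derivatives, so they hold verbatim with $\phi_2,\phi_{22}$ being derivatives in $s$. By ii), $\det(g_{ij})$ has the sign of $(\phi-s\phi_2)^{n-2}[\phi-s\phi_2+(b^2-s^2)\phi_{22}]$. I then run the standard continuity argument. Consider the family $\phi_t=(1-t)+t\phi$ for $t\in[0,1]$ (or equivalently deform $\beta$ to $t\beta$). For each $t$ the two inequalities remain valid, because the conditions are linear in $\phi$ and hold for $\phi\equiv1$; if that fails, deform $b\to tb$ instead, noting that $b^2-s^2$ scales by $t^2$. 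Thus $\det g_{ij}\neq0$ along the whole path, and $g_{ij}$ is positive definite at $t=0$, where it equals $a_{ij}$. By continuity of eigenvalues, $g_{ij}$ stays positive definite.

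For $n=2$ the factor $(\phi-s\phi_2)^{n-2}$ is absent, so only the second inequality is needed. For $n\ge3$ I must also show that $\phi-s\phi_2>0$. I deduce it from the hypothesis as follows. Take $y\neq0$ with $\beta(y)=0$ in the $\alpha$-orthogonal complement; then $\rho=\phi(\phi-s\phi_2)$ appears as an eigenvalue of $g_{ij}$ on the $(n-2)$-dimensional subspace orthogonal to both $b$ and $y$, which is read off from formula i).

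For necessity, assume $F$ is Finsler for every admissible $\alpha,\beta,\eta$. Restrict $g_{ij}$ to vectors $v$ with $a(v,b)=a(v,y)=0$, which exist when $n\ge3$. By i) this gives $g(v,v)=\rho\,a(v,v)$, so $\rho>0$ and hence $\phi-s\phi_2>0$. Positivity of $\det g$ from ii) then forces $\phi-s\phi_2+(b^2-s^2)\phi_{22}>0$.

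It remains to check that all triples $(b^2,s,\eta)$ with $|s|\le b<b_0$ and $\eta\in[0,1]$ are actually realized. I expect this realizability step to be the main obstacle, since it is the only genuinely new point compared with \cite{Yu}. I will handle it by choosing $\eta$ constant, $\alpha$ Euclidean and $\beta$ a constant form of prescribed norm, and then picking $y$ so that the ratio $s=\beta/\alpha$ takes the desired value. This shows the inequalities are needed on the full domain.
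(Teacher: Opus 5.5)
Your proposal is correct and takes essentially the route the paper takes, since the paper simply defers to \cite[Propositions 3.2 and 3.3]{Yu}. The ingredients match: a pointwise reduction using that $b^2$ and $\eta$ depend only on $x$, the determinant formula of Proposition~\ref{Prop0}, the homotopy $\phi_t=1-t+t\phi$ for sufficiency, restriction of $g_{ij}$ to the $a$-orthogonal complement of $b$ and $y$ for necessity when $n\ge 3$, and constant $\eta$ (with $\alpha,\beta$ chosen locally at a point) for realizability.

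The one blemish is the paragraph in the sufficiency part where you try to deduce $\phi-s\phi_2>0$ for $n\ge3$. In that direction it is already a hypothesis. Your eigenvalue argument also presupposes positive definiteness, so it is the necessity argument and should be dropped from sufficiency. The hedges about deforming $\beta$ or $b$ are unnecessary too, because the linear homotopy already works.
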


\begin{proposition}
\label{Prop2}Let $M$ be an $n$-dimensional $C^{\infty }$-manifold, $n>1.$ The relationship between the spray coefficients $\mathcal{G}%
^{i} $ of the Finsler metric $F=\alpha \phi (b^{2},s,\eta )$ and $\mathcal{G}%
_{\alpha }^{i}$ $=\frac{1}{2}\Gamma _{jk}^{i}y^{j}y^{k}$ of $\alpha $ reads
as follows 
\begin{eqnarray*}
\mathcal{G}^{i}\ \ &=&\ \mathcal{G}_{\alpha }^{i}+\alpha Qs_{0}^{i}+\{\Theta
\lbrack -2\alpha Qs_{0}+r_{00}+\alpha ^{2}(2Rr+Pq)]+\alpha \lbrack \Omega
(r_{0}+s_{0})+\frac{1}{2}\Lambda \eta _{0}]\}\frac{y^{i}}{\alpha } \\
&&+\{\Psi \lbrack -2\alpha Qs_{0}+r_{00}+\alpha ^{2}(2Rr+Pq)]+\alpha \lbrack
\Pi (r_{0}+s_{0})+\frac{1}{2}\Upsilon \eta _{0}\}b^{i} \\
&&-\alpha ^{2}[R(r^{i}+s^{i})+\frac{1}{2}P\eta ^{i}],
\end{eqnarray*}%
where 
\begin{eqnarray*}
R\ \ &=&\ \frac{\phi _{1}}{\phi -s\phi _{2}},\text{ \ \ \ \  \ \ }Q\ \ =\ \frac{\phi
_{2}}{\phi -s\phi _{2}},\text{ \ \ \ \ \ \ \ }P\ \ =\ \frac{\phi _{3}}{\phi -s\phi _{2}%
}, \\
\Theta \ \ &=&\ \ \frac{(\phi -s\phi _{2})\phi _{2}-s\phi \phi _{22}}{2\phi
\lbrack \phi -s\phi _{2}+(b^{2}-s^{2})\phi _{22}]}, \; \; \text{\ \ \ \ \ \ \ \ \ \ }\Psi \ \ =\ 
\frac{\phi _{22}}{2[\phi -s\phi _{2}+(b^{2}-s^{2})\phi _{22}]}, \\
\Pi \ \ &=&\ \ \frac{(\phi -s\phi _{2})\phi _{12}-s\phi _{1}\phi _{22}}{%
(\phi -s\phi _{2})[\phi -s\phi _{2}+(b^{2}-s^{2})\phi _{22}]},\text{ \ \ }%
\Omega \ \ =\ \frac{2\phi _{1}}{\phi }-\frac{s\phi +(b^{2}-s^{2})\phi _{2}}{%
\phi }\Pi , \\
\Upsilon \ \ &=&\ \ \frac{(\phi -s\phi _{2})\phi _{32}-s\phi _{3}\phi _{22}}{%
(\phi -s\phi _{2})[\phi -s\phi _{2}+(b^{2}-s^{2})\phi _{22}]},\text{ \ \ }%
\Lambda \ \ =\ \frac{2\phi _{3}}{\phi }-\frac{s\phi +(b^{2}-s^{2})\phi _{2}}{%
\phi }\Upsilon .
\end{eqnarray*}
\end{proposition}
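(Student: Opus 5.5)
We will derive the spray coefficients of $F=\alpha\phi(b^{2},s,\eta)$ directly from formula \eqref{S1}, in the form $\mathcal{G}^{i}=\frac{1}{4}g^{il}\{[F^{2}]_{x^{k}y^{l}}y^{k}-[F^{2}]_{x^{l}}\}$, using the inverse metric from \cref{Prop0} iii). The computation follows the proof of \cite[Proposition 3.3]{Yu} for $\alpha\phi(b^{2},s)$. The only new feature is the extra $x$-dependence through $\eta(x)$. It enters every $x$-derivative via the chain rule, since $[\eta]_{x^{k}}=\eta_{k}$. Because $\eta$ does not depend on $y$, it contributes nothing to the $y$-derivatives.

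The plan is to work at a point in normal coordinates of $\alpha$, or equivalently to replace partial $x$-derivatives by horizontal covariant derivatives with respect to $\alpha$. Then $\mathcal{G}^{i}-\mathcal{G}_{\alpha}^{i}$ is obtained from the same formula with $\alpha$ treated as parallel.

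In these coordinates one has
\[
F_{|k}=\alpha\bigl(2\phi_{1}(r_{k}+s_{k})+\phi_{2}s_{|k}+\phi_{3}\eta_{k}\bigr),\qquad s_{|k}=\alpha^{-1}b_{j|k}y^{j}.
\]
Next we compute $[F^{2}]_{|k\,y^{l}}y^{k}$ and $[F^{2}]_{|l}$. Following Yu, we write the combination $\frac12\{[F^{2}]_{|k\,y^{l}}y^{k}-[F^{2}]_{|l}\}$ as a linear combination of the covectors $y_{l}$, $b_{l}$ and $s_{l0}$, together with the new pieces $(r_{l}+s_{l})$ and $\eta_{l}$. The coefficients will involve $r_{00}$, $s_{0}$, $r_{0}$, $\eta_{0}$ and the functions $\phi,\phi_{1},\phi_{2},\phi_{3},\phi_{12},\phi_{22},\phi_{32}$.

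The $\eta$-terms have exactly the same structure as the $b^{2}$-terms. Each $\phi_{1}\cdot 2(r_{k}+s_{k})$ is paired with a $\phi_{3}\eta_{k}$. This explains why $P,\Upsilon,\Lambda$ arise from $R,\Pi,\Omega$ by substituting $\phi_{1}\to\phi_{3}$ and $2(r+s)\to\eta$; the factor $\tfrac12$ in front of $P,\Upsilon,\Lambda$ accounts for this $2$. The scalars also match: $r=b^{i}r_{i}$ pairs with $q=b^{i}\eta_{i}$, and $b^{i}s_{i}=0$.

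Finally we contract with $g^{il}=\rho^{-1}[a^{il}+\tau b^{i}b^{l}+\alpha^{-1}\tau_{0}(b^{i}y^{l}+b^{l}y^{i})+\alpha^{-2}\tau_{1}y^{i}y^{l}]$. For this we use $y^{l}y_{l}=\alpha^{2}$, $b^{l}y_{l}=\beta=s\alpha$, $b^{l}b_{l}=b^{2}$, $y^{l}s_{l0}=0$, $b^{l}s_{l0}=s_{0}$, $y^{l}\eta_{l}=\eta_{0}$ and $b^{l}\eta_{l}=q$. We then collect the terms along the directions $s_{0}^{i}$, $y^{i}$, $b^{i}$, $r^{i}+s^{i}$ and $\eta^{i}$.

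The main obstacle is the algebraic simplification of these coefficients into the closed forms $\Theta,\Psi,\Pi,\Omega,\Upsilon,\Lambda$. This requires repeated use of the identity
\[
\rho^{-1}(1+\tau(b^{2}-s^{2}))=\frac{\phi-s\phi_{2}}{\phi\,[\phi-s\phi_{2}+(b^{2}-s^{2})\phi_{22}]}\cdot\frac{1}{\phi-s\phi_{2}}
\]
and of its analogues. Rather than redo Yu's simplification, we would organize the computation so that the $\eta$-terms visibly reproduce the $b^{2}$-terms under the substitution above. The final formula then follows from Yu's result by linearity in the "source" covector, with only the $\eta$-block needing to be checked explicitly.
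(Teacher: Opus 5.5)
Your proposal is correct and follows essentially the paper's route. The paper also splits $\mathcal{G}^i=\mathcal{G}_1^i+\mathcal{G}_2^i+\mathcal{G}_3^i$ into the part involving $\phi_1$, the part involving only $\phi_2,\phi_{22}$, and the part involving $\phi_3$; it takes the first two from Yu and contracts the $\eta$-block $Uy_l+Vb_l+X\eta_l$ with $g^{il}$ from Proposition~\ref{Prop0}~iii). The one difference is that the paper carries out that last contraction explicitly, whereas you read $\mathcal{G}_3^i$ off Yu's $\mathcal{G}_1^i$ via the substitution $\phi_1\to\phi_3$, $\phi_{12}\to\phi_{32}$, $2(r_k+s_k)\to\eta_k$. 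This substitution is legitimate because $g^{il}$ involves neither $\phi_1$ nor $\phi_3$, and $b^ks_k=0$ lets $r$ correspond to $q$; the paper's final $\mathcal{G}_3^i$ is exactly this image.
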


\begin{proof} The proof partially relies on some technical computations which
are done in \cite[Propositions 3.4]{Yu}. For completeness, we provide a few
details. Here, we have in addition the dependence of $\phi $ on the third
variable $\eta (x).$ On account of \eqref{S1} and following the
differentiations%
\begin{equation*}
\begin{array}{l}
\lbrack {F}^{2}]_{x^{k}}\ \ =\ [\alpha ^{2}]_{x^{k}}\phi ^{2}+2\alpha
^{2}\phi \{\phi _{1}[b^{2}]_{x^{k}}+\phi
_{2}s_{x^{k}}+\phi _{3}\eta _{x^{k}}\},\quad \\ 
~ \\ 
\lbrack {F}^{2}]_{x^{k}y^{l}}\ =\ [\alpha ^{2}]_{x^{k}y^{l}}\phi
^{2}+2[\alpha ^{2}]_{x^{k}}\phi \phi _{2}s_{y^{l}}+2[\alpha
^{2}]_{y^{l}}\phi \phi _{1}[b^{2}]_{x^{k}}+2\alpha ^{2}(\phi _{1}\phi
_{2}+\phi \phi
_{12})[b^{2}]_{x^{k}}s_{y^{l}} \\~\\
\ \text{\ \ \ \ \ \ \ \ \ \ \ \ \ }+2[\alpha ^{2}]_{y^{l}}\phi \phi
_{2}s_{x^{k}}+2\alpha ^{2}[(\phi _{2})^{2}+\phi
\phi _{22}]s_{x^{k}}s_{y^{l}}+2\alpha ^{2}\phi \phi _{2}s_{x^{k}y^{l}} \\~\\
\text{ \ \ \ \ \ \ \ \ \ \ \ \ \ }+2[\alpha ^{2}]_{y^{l}}\phi \phi _{3}\eta
_{x^{k}}+2\alpha ^{2}(\phi _{3}\phi _{2}+\phi \phi _{32})\eta _{x^{k}}s_{y^{l}},%
\end{array}%
\end{equation*}%
let us observe that $\mathcal{G}^{i}$ can be expressed as%
\begin{equation*}
\mathcal{G}^{i}=\mathcal{G}_{1}^{i}+\mathcal{G}_{2}^{i}+\mathcal{G}_{3}^{i},
\end{equation*}%
where each term $\mathcal{G}_{1}^{i}$ and $\mathcal{G}_{3}^{i}$ includes at
least a differentiation of $\phi $ with respect to the first and the third
variable, respectively, the term $\mathcal{G}_{2}^{i}$ comprising only $\phi
_{2}$ and $\phi _{22}.$ Precisely, they are 
\begin{eqnarray*}
\mathcal{G}_{1}^{i}\ \ &=&\ \frac{1}{2}g^{il}\{[\alpha ^{2}]_{y^{l}}\phi
\phi _{1}[b^{2}]_{x^{k}}y^{k}+\alpha ^{2}\phi _{1}\phi
_{2}[b^{2}]_{x^{k}}y^{k}s_{y^{l}}+\alpha ^{2}\phi \phi
_{12}[b^{2}]_{x^{k}}y^{k}s_{y^{l}}-\alpha ^{2}\phi \phi
_{1}[b^{2}]_{x^{l}}\}, \\
\mathcal{G}_{2}^{i}\ \ &=&\ \frac{1}{4}g^{il}\{[\alpha
^{2}]_{x^{k}y^{l}}y^{k}\phi ^{2}+2[\alpha ^{2}]_{x^{k}}y^{k}\phi \phi
_{2}s_{y^{l}}+2[\alpha ^{2}]_{y^{l}}\phi \phi _{2}s_{x^{k}}y^{k}+2\alpha
^{2}(\phi _{2})^{2}s_{x^{k}}s_{y^{l}}y^{k} \\
&&+2\alpha ^{2}\phi \phi _{22}s_{x^{k}}s_{y^{l}}y^{k}+2\alpha ^{2}\phi \phi
_{2}s_{x^{k}y^{l}}y^{k}-[\alpha ^{2}]_{x^{l}}\phi ^{2}-2\alpha ^{2}\phi \phi
_{2}s_{x^{l}}\}, \\
\mathcal{G}_{3}^{i}\ \ &=&\ \frac{1}{2}g^{il}\{[\alpha ^{2}]_{y^{l}}\phi
\phi _{3}\eta _{x^{k}}y^{k}+\alpha ^{2}\phi _{3}\phi _{2}\eta
_{x^{k}}y^{k}s_{y^{l}}+\alpha ^{2}\phi \phi _{32}\eta
_{x^{k}}y^{k}s_{y^{l}}-\alpha ^{2}\phi \phi _{3}\eta _{x^{l}}\},
\end{eqnarray*}%
where $\mathcal{G}_{1}^{i}$ and $\mathcal{G}_{2}^{i}$ can be taken from \cite%
[Propositions 3.4]{Yu}, i.e.,%
\begin{eqnarray}
\mathcal{G}_{1}^{i} &=&[2\alpha ^{2}\Theta Rr+\alpha \Omega (r_{0}+s_{0})]%
\frac{y^{i}}{\alpha }+[2\alpha ^{2}\Psi Rr+\alpha \Pi
(r_{0}+s_{0})]b^{i}-\alpha ^{2}R(r^{i}+s^{i}),  \label{G12} \\
\mathcal{G}_{2}^{i} &=&\mathcal{G}_{\alpha }^{i}+\alpha Qs_{0}^{i}+\Theta
(-2\alpha Qs_{0}+r_{00})\frac{y^{i}}{\alpha }+\Psi (-2\alpha
Qs_{0}+r_{00})b^{i}.  \notag
\end{eqnarray}%
Thus, it remains only to compute explicitly the term $\mathcal{G}_{3}^{i}.$
Making use of \eqref{rs} and \cref{Prop0}, one has that%
\begin{equation}
\mathcal{G}_{3}^{i}\ =\ g^{il}(Uy_{l}+Vb_{l}+X\eta _{l})=\rho
^{-1}(Zy^{i}+Yb^{i}+X\eta ^{i}),  \label{3G}
\end{equation}%
where%
\begin{equation*}
U\ \ =\ \frac{1}{2}[2\phi \phi _{3}-s(\phi _{3}\phi _{2}+\phi \phi
_{32})]\eta _{0},\text{ \ \ }V\ \ =\ \frac{1}{2}\alpha (\phi _{3}\phi
_{2}+\phi \phi _{32})\eta _{0},\text{ \ \ }X\ \ =\ -\frac{1}{2}\alpha
^{2}\phi \phi _{3},
\end{equation*}%
and implicitly for \eqref{3G}  to be identically checked, it yields%
\begin{eqnarray*}
Z\ \ &=&\ U+(Us+V\alpha ^{-1}b^{2}+X\alpha ^{-1}q)\tau _{0}+(U+V\alpha
^{-1}s+X\alpha ^{-2}\eta _{0})\tau _{1}, \\
Y\ \ &=&\ V+\alpha \lbrack (Us+V\alpha ^{-1}b^{2}+X\alpha ^{-1}q)\tau
+(U+V\alpha ^{-1}s+X\alpha ^{-2}\eta _{0})\tau _{0}].
\end{eqnarray*}%
Next, an elementary computation shows that 
\begin{equation*}
\rho ^{-1}Z\ =\ \frac{1}{2}\Lambda \eta _{0}+\alpha \Theta Pq,\text{ \ \ \ }%
\rho ^{-1}Y\ =\ \alpha (\frac{1}{2}\Upsilon \eta _{0}+\alpha \Psi Pq),\text{
\ \ \ }\rho ^{-1}X\ =\ -\frac{1}{2}\alpha ^{2}P,
\end{equation*}%
and thus, by \eqref{3G} it turns out that%
\begin{equation*}
\mathcal{G}_{3}^{i}\ =(\frac{1}{2}\alpha \Lambda \eta _{0}+\alpha ^{2}\Theta
Pq)\ \frac{y^{i}}{\alpha }+\alpha (\frac{1}{2}\Upsilon \eta _{0}+\alpha \Psi
Pq)b^{i}-\frac{1}{2}\alpha ^{2}P\eta ^{i}.
\end{equation*}%
The claim follows by the latter relation and \eqref{G12}.
\end{proof}

\section{Proofs of main theorems}

\label{Sec_3}

\subsection{Superwind metric: proof of \cref{Thm1}}

\label{Subsec_3.1}In the sequel, we prove \cref{Thm1}, supplying the superwind metric which
collects all requirements for a Finsler one. To end this, some background
preparations will be fixed. Given an $n$-dimensional Riemannian manifold $%
(M,h)$, $n>1,$ let $u$ and $W$ be two vector fields on $M.$ The vector field 
$W$ is given and it is commonly referred to as a wind, whereas $u$ stands for the self-velocity of a moving imaginary
craft on $M,$ assuming throughout this section that $||u||_{h}=1.$ Posing the navigation problem $\mathcal{P}_{\eta }$ on $(M,h)$ under the
action of the superwind $\mathcal{W}_{{\eta }}=\eta \mathcal{W}%
_{MAT}+(1-\eta )W,$ the resultant velocity is $v_{\eta }=u+\mathcal{W}_{{%
\eta }},$ for any $C^{\infty }$-function $\eta :M\rightarrow \lbrack 0,1].$
Since $\mathcal{W}_{MAT}$ denotes the projection of the vector field $W$ on $u$, it
follows that $||\mathcal{W}_{{\eta }}||_{h}\leq ||W||_{h},$ for any $%
C^{\infty }$-function $\eta =\eta (x)\in \lbrack 0,1]$. For convenience, in what follows, we will use to write simple $\eta$ instead of  $\eta(x)$.

Ostensibly, $\mathcal{P}_{\eta }$ looks like a standard Zermelo navigation
problem, where the solution is given by a Finsler metric of Randers type if
the wind is weak, intensively studied in the literature \cite%
{BRS,chern_shen,CJS}. However, it is quite complicated
because of the feature of the superwind $\mathcal{W}_{{\eta }}$ which is not
a priori known, underlining that only the wind $W$ is given, the vector field $%
\mathcal{W}_{MAT}$ depends on the direction of $u.$ Consequently, $\mathcal{W%
}_{{\eta }}$ depends on $u$ and thus, a key ingredient that we use to carry
out this dependence is, as a first step, to take into consideration the
effect of the vector field $\eta \mathcal{W}_{MAT},$ which refers to the
deformation of the background Riemannian metric $h$ by $\eta \mathcal{W}%
_{MAT}$, i.e., an anisotropic deformation. Then, to perform the proof of %
\cref{Thm1}, the classical Zermelo navigation is applied and developed in the second step,
where the indicatrix of the Finsler metric $F$ of Matsumoto type, achieved by
the first step, is rigidly translated by the scaled vector field $(1-\eta )W,$  under the condition $F(x,-(1-\eta )W)<1,$ which practically secures that
the moving of the imaginary craft is possible in any direction and also the
uniqueness of the superwind metric (see \cite{SH,CJS}). Therefore, we devide the proof into two steps.

\paragraph{Step I: anisotropic deformation}

This step states that the direction-dependent deformation of the Riemannian
metric $h$ by $\eta \mathcal{W}_{MAT}$ provides a Finsler metric if and only
if $||W||_{h}<\frac{1}{2\eta },$ which requires the condition $\eta ||%
\mathcal{W}_{MAT}||_{h}<1,$ for any $C^{\infty }$-function $\eta
:M\rightarrow (0,1].$ In the complementary case when $\eta ||\mathcal{W}%
_{MAT}||_{h} \geq 1$ (possible only at some directions), the deformation cannot
yield a Finsler metric.

In order to achieve these, we describe the deformation of $h$ by the vector
field $\eta \mathcal{W}_{MAT}$ in terms of the resultant velocity 
\begin{equation}
v=u+\eta \mathcal{W}_{MAT},  \label{res}
\end{equation}%
for any $C^{\infty }$-function $\eta :M\rightarrow (0,1].$ We note that
whenever $\eta (x)=0$ for some $x\in M$, then $v=u$ and thus, there does not exist
any deformation of $h.$

Since $\mathcal{W}_{MAT}$ stands for the projection of the vector field $W$
on $u,$ the vectors $u$ and $v$ are collinear. Moreover, 
\begin{equation}
\mathcal{W}_{MAT}=\frac{h(v,W)}{||v||_{h}^{2}}v,  \label{proj}
\end{equation}%
when the resultant velocity $v$ is nonzero (i.e., $u\neq -\eta \mathcal{W}_{MAT}$) and consequently one has that $||\mathcal{W}_{MAT}||_{h}=\frac{|h(v,W)|}{||v||_{h}}%
\neq \frac{1}{\eta }.$ The last condition will be splited below into two
cases: $\eta ||\mathcal{W}_{MAT}||_{h}<1$ and $\eta ||\mathcal{W}%
_{MAT}||_{h}>1$ which will be  analyzed separately. It is worth mentioning
that $v$ can vanish only when $u=-\eta \mathcal{W}_{MAT},$ which implies $||%
\mathcal{W}_{MAT}||_{h}=\frac{1}{\eta }.$ However, the case $||\mathcal{%
W}_{MAT}||_{h}=\frac{1}{\eta }$ will also be taken into consideration at the
end of this step, since it does not lead only to $v=0.$ Indeed, if $||%
\mathcal{W}_{MAT}||_{h}=\frac{1}{\eta }$ and $v\neq 0$, then $u=\eta \mathcal{%
W}_{MAT}$ and $v=2\eta \mathcal{W}_{MAT}$.

In view of \eqref{res} and \eqref{proj}, a direct computation together with the equality $u=[1-\eta \frac{h(v,W)}{%
||v||_{h}^{2}}]v$ leads to the equation 
\begin{equation}
\left( ||v||_{h}-\eta \frac{h(v,W)}{||v||_{h}}\right) ^{2}=1,  \label{E1}
\end{equation}%
if $\frac{|h(v,W)|}{||v||_{h}}\neq \frac{1}{\eta }.$

In addition, one may introduce the notation $g_{1}(x,v)=\left( ||v||_{h}-\eta \frac{h(v,W)}{%
||v||_{h}}\right) ^{2}-1$.  Then, \eqref{E1} can be written in the equivalent form $g_{1}(x,v)=0$ and, supported by Okubo's method (\cite{matsumoto}), we will be able to get the function $F(x,v)$ as solutions of the equation $g_{1}(x,\frac{v}{F})=0$, i.e.,%
\begin{equation*}
\left( \frac{||v||_{h}^{2}}{\eta h(v,W)+||v||_{h}}-F(x,v)\right) \left( 
\frac{||v||_{h}^{2}}{\eta h(v,W)-||v||_{h}}-F(x,v)\right) =0,
\end{equation*}%
with $\frac{|h(v,W)|}{||v||_{h}}\neq \frac{1}{\eta }.$ Moreover, we can
extend $F(x,v)$ to an arbitrary nonzero vector $y\in T_{x}M,$ for any $x\in
M $ because any nonzero $y$ can be expressed as $y=cv,$ $c>0,$ and $F(x,v)=1$%
. Namely, it turns out the positive homogeneous $C^{\infty }$-function $%
F(x,y)$ which holds%
\begin{equation}
\left( \frac{||y||_{h}^{2}}{\eta h(y,W)+||y||_{h}}-F(x,y)\right) \left( 
\frac{||y||_{h}^{2}}{\eta h(y,W)-||y||_{h}}-F(x,y)\right) =0,  \label{sol}
\end{equation}%
with $\frac{|h(y,W)|}{||y||_{h}}\neq \frac{1}{\eta },$ for any $C^{\infty }$%
-function $\eta :M\rightarrow (0,1].$

There is still a certain amount of properties regarding the
function $F(x,y)$ which can be explicitly provided by the solutions of the
equation \eqref{sol}. To emphasize this, we need to study the following
cases: 1. $\frac{|h(y,W)|}{||y||_{h}}<\frac{1}{\eta }$ and 2. $\frac{|h(y,W)|%
}{||y||_{h}}>\frac{1}{\eta }.$

\noindent Case 1. We assume that $\frac{|h(y,W)|}{||y||_{h}}<\frac{1}{\eta },$
which actually means $\eta ||\mathcal{W}_{MAT}||_{h}<1$. We first claim that
this is a necessary and sufficient condition that the equation \eqref{sol}
admits the following unique positive root 
\begin{equation}
F(x,y)=\frac{||y||_{h}^{2}}{||y||_{h}+\eta h(y,W)}\text{ },  \label{eta_mat}
\end{equation}%
for any $C^{\infty }$-function $\eta :M\rightarrow (0,1],$ on all $TM_{0}.$

In view of the result, it seems to be reasonable to observe that the
positivity of \eqref{eta_mat} on $TM_{0}$ means that 
\begin{equation}
||y||_{h}+\eta h(y,W)>0,  \label{ineq}
\end{equation}%
for all nonzero $y$ and any $C^{\infty }$-function $\eta :M\rightarrow
(0,1]. $ If the positivity is achieved on $TM_{0},$ we can replace $y$ with $%
-W\neq 0$ in \eqref{ineq} and thus, it turns out that $\eta ||W||_{h}<1.$
Together with the inequality $||\mathcal{W}_{MAT}||_{h}\leq ||W||_{h}$, we
conclude that $\eta ||\mathcal{W}_{MAT}||_{h}<1$ on all $TM_{0}.$
Conversely, if $\eta ||\mathcal{W}_{MAT}||_{h}<1$ on all $TM_{0}$ (the case $%
\mathcal{W}_{MAT}=0$ is also included), it follows that $\frac{|h(y,\mathbf{G%
}^{T})|}{||y||_{h}}<\frac{1}{\eta }$ for any nonzero $y,$ which gives %
\eqref{ineq}. Thus, the assertion that the function $F(x,y)$ provided by %
\eqref{eta_mat} is positive on $TM_{0}$ is proved.

From now on, we fix the following notations: 
\begin{equation}
\alpha ^{2}=||y||_{h}^{2}=h_{ij}y^{i}y^{j}\text{ \ \ \ \ and \ \ }\beta
=-h(y,W)=b_{i}y^{i},  \label{NOT}
\end{equation}%
where $\alpha =\alpha (x,y),$ $\beta =\beta (x,y),$ $W=W^{i}\frac{\partial }{%
\partial x^{i}},$ $b_{i}=-h_{ij}W^{j}=-W_{i}$ and $b=||\beta
||_{h}=||W||_{h}.$ Based on these, we can express the function %
\eqref{eta_mat} as 
\begin{equation}
F(x,y)=\frac{\alpha ^{2}}{\alpha -\eta \beta },\text{ }
\label{Matsumoto_eta}
\end{equation}%
for any $C^{\infty }$-function $\eta :M\rightarrow (0,1].$ Obviously, it is
of Matsumoto type with the indicatrix%
\begin{equation}
I_{F}=\left\{ (x,y)\in TM_{0}\text{ }|\text{ }\alpha ^{2}(\alpha -\eta \beta
)^{-1}=1\right\} \subset TM.  \label{indicatrix}
\end{equation}%
Moreover, since $y=0$ does not lie in the closure of the
indicatrix $I_{F}$  in $TM_{0}$, we can extend $F(x,y)$ continuously to all $TM,$ i.e., $%
F(x,0)=0$ for any $x\in M$ (see \cite{CJS}). Therefore, the function %
\eqref{Matsumoto_eta} seems to be a promising Finsler metric. In order to
confirm this, we aim to establish the necessary and sufficient conditions
for the strong convexity of the indicatrix $I_{F},$ for any $C^{\infty }$%
-function $\eta :M\rightarrow (0,1]$.

Let us write $F(x,y)=\alpha \phi (s,\eta ),$ where $\phi (s,\eta)=\frac{1}{1-\eta
s}\,$\ with $s=\frac{\beta }{\alpha }$ and $\eta=\eta(x).$ In the following, we collect a few
desired properties for $\phi (s,\eta )$ and also we describe the force of
the wind $W$ via the variable $s.$ Since the assumed condition $\frac{%
|h(y,W)|}{||y||_{h}}<\frac{1}{\eta }$ is actually $|s|<\frac{1}{\eta }$, for
arbitrary nonzero $y\in T_{x}M$ and $x\in M$, \ it turns out that $\phi $ is
a positive $C^{\infty }$-function on the open interval $\mathcal{I}=\left( -%
\frac{1}{\eta },\frac{1}{\eta }\right) $, for any $C^{\infty }$-function $%
\eta :M\rightarrow (0,1].$

\begin{lemma}
\label{Lema1} Given $\phi (s,\eta )=\frac{1}{1-\eta s}$ with $s\in \mathcal{I%
}$, for any $C^{\infty }$-function $\eta :M\rightarrow (0,1]$, the following
statements are equivalent:

\begin{itemize}
\item[i)] $\phi (s,\eta )-s\phi _{2}(s,\eta )+(b^{2}-s^{2})\phi _{22}(s,\eta
)>0$, where $b=||W||_{h}$; $\phi _{2}$ and $\phi _{22}$ denote the
differentiations of $\phi $ and $\phi _{2}$ with respect to $s,$ respectively;

\item[ii)] $|s|\leq b<b_{0}$, where $b_{0}=\frac{1}{2\eta }$;

\item[iii)] $||W||_{h}<\frac{1}{2\eta }.$
\end{itemize}
\end{lemma}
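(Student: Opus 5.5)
Compute the derivatives explicitly: for $\phi=(1-\eta s)^{-1}$ one has $\phi_2=\eta(1-\eta s)^{-2}$ and $\phi_{22}=2\eta^2(1-\eta s)^{-3}$. Hence
\begin{equation*}
\phi-s\phi_2+(b^2-s^2)\phi_{22}=\frac{(1-\eta s)^2-\eta s(1-\eta s)+2\eta^2(b^2-s^2)}{(1-\eta s)^3}=\frac{1-3\eta s+2\eta^2b^2}{(1-\eta s)^3}.
\end{equation*}
Since $s\in\mathcal{I}$, the denominator is positive. So i) is equivalent to the positivity of the numerator $N(s)=1-3\eta s+2\eta^2 b^2$ for all admissible $s$, with $\eta\in(0,1]$ fixed at the point $x$.

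Because $s=\beta/\alpha$ with $\beta=-h(y,W)$, Cauchy--Schwarz gives $|s|\le b$, and $s$ ranges over the whole closed interval $[-b,b]$ as $y$ varies (take $y=\mp W$ for the endpoints; if $W=0$ the statement is trivial). Thus i), required for all nonzero $y$, is equivalent to $\min_{|s|\le b}N(s)>0$. Since $N$ is affine in $s$ with negative slope, the minimum is attained at $s=b$. This gives
\begin{equation*}
N(b)=2\eta^2b^2-3\eta b+1=(1-\eta b)(1-2\eta b).
\end{equation*}
This is positive exactly when $\eta b<\tfrac12$ or $\eta b>1$.

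The main obstacle is excluding the branch $\eta b>1$. I would do this using Case 1, where we already showed that positivity of $F$ on $TM_0$ forces $\eta\|W\|_h<1$; equivalently, $s=b$ must lie in $\mathcal{I}$. Hence i) holds iff $\eta b<\tfrac12$, i.e.\ iii) $\|W\|_h<\frac{1}{2\eta}$. Finally, ii) is just a restatement: $|s|\le b$ always holds, so ii) reduces to $b<b_0=\frac{1}{2\eta}$, which is iii). One should also check that $b_0=\frac1{2\eta}<\frac1\eta$, so $[-b,b]\subset\mathcal{I}$ is consistent with the domain of $\phi$. This closes the chain i)$\Leftrightarrow$iii)$\Leftrightarrow$ii).
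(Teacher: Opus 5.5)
Your proof is correct and follows the paper's route: the explicit formula for $\xi=\phi-s\phi_2+(b^2-s^2)\phi_{22}$, the Cauchy--Schwarz bound $|s|\le b$, evaluation at $s=b$ for i)$\Rightarrow$ii), and a lower bound on $\xi$ for the converse. The differences are cosmetic. The paper cancels the factor $1-\eta b$ at $s=b$ to get $\xi=(1-2\eta b)/(1-\eta b)^2$, and uses $(b^2-s^2)\phi_{22}\ge 0$ for the converse. You instead use that the numerator is affine and decreasing in $s$, and you explicitly invoke the standing Case 1 hypothesis $\eta\|W\|_h<1$ to discard the branch $\eta b>1$; the paper uses this hypothesis tacitly, since evaluating at $s=b$ requires $b\in\mathcal{I}$.
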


\begin{proof}
In view of the Cauchy-Schwarz inequality $|h(y,W)|\leq ||y||_{h}||W||_{h},$
clearly follows that $|s|\leq ||W||_{h}=b,$ for any nonzero $y\in T_{x}M$ and $%
x\in M.$

We first establish a lower bound for $(b^{2}-s^{2})\phi _{22}(s,\eta )$ and
then, we evaluate the expression $\xi =\phi (s,\eta )-s\phi _{2}(s,\eta
)+(b^{2}-s^{2})\phi _{22}(s,\eta ).$ Indeed, since $|s|<\frac{1}{\eta }$,
one has that 
\begin{equation*}
(b^{2}-s^{2})\phi _{22}(s,\eta )=(b^{2}-s^{2})\frac{2\eta ^{2}%
}{(1-\eta s)^{3}}\geq 0
\end{equation*}
Moreover, this yields that the minimum value of $%
(b^{2}-s^{2})\phi _{22}(s,\eta )$ is $0$ and it is achieved when $|s|=b,$
for any $C^{\infty }$-function $\eta :M\rightarrow (0,1]$. By a simple
computation we get for $\xi$ the following expression
\begin{equation}
\xi =\frac{(1-\eta s)(1-2\eta s)+2(b^{2}-s^{2})\eta ^{2}}{(1-\eta s)^{3}}.
\label{IEQ}
\end{equation}

To prove the implication i) $\Rightarrow $ ii), we assume that $\xi >0.$
Taking $s=b$ in \eqref{IEQ}, we arrive at $1-2\eta b>0$, and then $b<\frac{1%
}{2\eta }.$ Hence, we conclude that $|s|\leq b<\frac{1}{2\eta }$ which is
the required ii). Conversely, assuming that $|s|\leq b<\frac{1}{2\eta }$ and
using \eqref{IEQ}, we obtain%
\begin{equation*}
\xi \geq \frac{(1-\eta s)(1-2\eta s)}{(1-\eta s)^{3}}=\frac{1-2\eta s}{%
(1-\eta s)^{3}}~>~0,
\end{equation*}%
for any $C^{\infty }$-function $\eta :M\rightarrow (0,1]$.

Now we prove iii) $\Rightarrow $ ii). Since\textbf{\ }$||W||_{h}<\frac{1}{%
2\eta }$ and $|s|\leq ||W||_{h}=b$, it follows the inequality $|s|\leq b<%
\frac{1}{2\eta }.$ The converse implication ii) $\Rightarrow $ iii) is
obvious. 
\end{proof}

Note that that the statement $|s|\leq b<\frac{1}{2\eta }$ also implies that
for any $C^{\infty }$-function $\eta :M\rightarrow (0,1]$, $\phi (s,\eta
)-s\phi _{2}(s,\eta )>0$. In view of the above results and applying %
\cref{Prop1}, we have stated the following result

\begin{lemma}
\label{Lema2}For any $C^{\infty }$-function $\eta :M\rightarrow (0,1],$ the Matsumoto type function  $F(x,y)=\frac{\alpha ^{2}}{\alpha -\eta \beta }$ is a Finsler metric if and
only if $\ ||W||_{h}<\frac{1}{2\eta }.$
\end{lemma}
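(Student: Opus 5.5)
The plan is to reduce \cref{Lema2} to \cref{Prop1} applied to the extended general $(\alpha,\beta)$-metric $F=\alpha\phi(b^{2},s,\eta)$. Here $\phi(s,\eta)=\frac{1}{1-\eta s}$ does not depend on $b^{2}$, $\alpha^{2}=h_{ij}y^iy^j$, $\beta=b_iy^i$ with $b_i=-W_i$, and $b=||W||_h$ as in \eqref{NOT}. \cref{Prop1} says that $F$ is a Finsler metric if and only if $\phi$ is positive and smooth and satisfies the regularity inequalities on $|s|\le b<b_0$. So the proof amounts to checking these inequalities, and \cref{Lema1} does most of the work.

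\textbf{Sufficiency.} Assume $||W||_h<\frac{1}{2\eta}$. By \cref{Lema1}, iii) gives ii), so $|s|\le b<b_0=\frac{1}{2\eta}<\frac1\eta$. Then $s\in\mathcal I$, and $\phi$ is positive and $C^\infty$ there. Again by \cref{Lema1}, i) holds: $\phi-s\phi_2+(b^2-s^2)\phi_{22}>0$. For $n\ge 3$ we also need $\phi-s\phi_2>0$. A direct computation gives $\phi-s\phi_2=\frac{1-2\eta s}{(1-\eta s)^2}$, which is positive because $\eta s\le\eta b<\frac12$; this is the remark made after \cref{Lema1}. Positivity of $F$ on $TM_0$ is \eqref{ineq}, which holds since $\eta||W||_h<\frac12<1$. \cref{Prop1} then yields that $F$ is a Finsler metric, and in particular the indicatrix \eqref{indicatrix} is strongly convex. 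The continuous extension $F(x,0)=0$ was already noted.

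\textbf{Necessity.} Assume $F$ is a Finsler metric, so $g_{ij}$ is positive definite on $TM_0$. By \cref{Prop0} ii), $\det(g_{ij})>0$, and together with $\phi>0$ this forces $\xi=\phi-s\phi_2+(b^2-s^2)\phi_{22}>0$ for every nonzero $y$. I would use this directly rather than through the "for any $\alpha,\beta$" phrasing of \cref{Prop1}. Choose $y=-W$ at a point where $W\neq0$, so that $s=b$. Then \eqref{IEQ} reduces to $\xi=\frac{1-2\eta b}{(1-\eta b)^2}$. Here $1-\eta b>0$ by the positivity \eqref{ineq}, so $\xi>0$ forces $b<\frac1{2\eta}$. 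Where $W=0$ the bound holds trivially. This is exactly the implication i) $\Rightarrow$ ii) $\Rightarrow$ iii) of \cref{Lema1}.

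The main obstacle is a subtlety rather than a computation. \cref{Prop1} is phrased for arbitrary $\alpha$ and $\beta$ with $||\beta||_\alpha<b_0$, whereas here $\alpha$ and $\beta$ are fixed by $h$ and $W$. The necessity direction therefore has to be argued pointwise via the determinant formula and the sign of $\xi$ at $s=b$, as above. One must also check that $s=b$ is attained, which it is at $y=-W$, and that the factor $(\phi-s\phi_2)^{n-2}$ in \cref{Prop0} ii) cannot change sign and thereby spoil positive definiteness. The latter follows because $\phi-s\phi_2=\frac{1-2\eta s}{(1-\eta s)^2}$ stays positive on $|s|\le b<\frac{1}{2\eta}$.
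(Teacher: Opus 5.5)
Your sufficiency argument is correct and follows the paper's route: \cref{Lema1} iii)$\Rightarrow$i), the remark $\phi-s\phi_2=\frac{1-2\eta s}{(1-\eta s)^2}>0$, and \cref{Prop1}. The gap is in necessity, at the step ``$\det(g_{ij})>0$ and $\phi>0$ force $\xi>0$.'' From \cref{Prop0} ii) you only get $(\phi-s\phi_2)^{n-2}\,\xi>0$. That yields $\xi>0$ when $n$ is even, but not when $n\ge 3$ is odd.

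The situation is worse at the very vector you use. At $y=-W$ you have $s=b$, so $(b^2-s^2)\phi_{22}=0$ and $\phi-s\phi_2=\xi$. Hence $\det(g_{ij})=\phi^{n+1}\xi^{n-1}\det(h_{ij})$ there. For odd $n$ this is positive whenever $\xi\neq 0$, including $\xi<0$, i.e.\ $\|W\|_h>\frac{1}{2\eta}$. So the determinant at $y=-W$ carries no sign information. Your attempted repair, that the factor $(\phi-s\phi_2)^{n-2}$ stays positive because $|s|\le b<\frac{1}{2\eta}$, assumes the inequality you are trying to prove, so it is circular.

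The fix is short. For $n\ge 3$, use positive definiteness directly rather than the determinant. Pick a nonzero $v$ that is $h$-orthogonal to both $y$ and $W$, which exists since $n\ge 3$. Then $b_iv^i=0$ and $\alpha_{y^i}v^i=0$, so by \cref{Prop0} i), $g_{ij}v^iv^j=\rho\,h(v,v)$ with $\rho=\phi(\phi-s\phi_2)$. Hence $\phi-s\phi_2>0$ at every $y\neq 0$. At $y=-W$ this reads $\frac{1-2\eta b}{(1-\eta b)^2}>0$, i.e.\ $b<\frac{1}{2\eta}$.

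Alternatively, still for $n\ge 3$: if $b\ge\frac{1}{2\eta}$, then $s$ takes every value in $[-b,b]$ on $T_xM\setminus\{0\}$. So some $y$ has $s=\frac{1}{2\eta}$, where $\phi-s\phi_2=0$ and therefore $\det(g_{ij})=0$, a contradiction. For even $n$ (in particular $n=2$), your determinant argument at $y=-W$ works as written.

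With this patch, your pointwise treatment of necessity is more careful than the paper's. The paper simply applies the ``for any $\alpha,\beta$'' characterization of \cref{Prop1} to the fixed $\alpha,\beta$ coming from $h$ and $W$.
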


\noindent Therefore, by \cref{Lema2}, we conclude that the
indicatrix $I_{F}$ is strongly convex if and only if $||W||_{h}<\frac{1}{%
2\eta }$, for any $C^{\infty }$-function $\eta :M\rightarrow (0,1].$

\medskip \noindent  Case 2. Now, we deal with the second possibility $\frac{|h(y,W)|}{%
||y||_{h}}>\frac{1}{\eta }.$ Since this is equivalent to the inequality $%
\eta ||\mathcal{W}_{MAT}||_{h}>1$, it turns out that $||W||_{h}>\frac{1}{%
\eta }.$ Moreover, in this case the equation \eqref{sol} admits two positive
solutions%
\begin{equation}
F_{1,2}(x,y)=\frac{||y||_{h}^{2}}{\pm ||y||_{h}+\eta h(y,W)},  \label{F12}
\end{equation}%
only when $y\in \mathcal{A}_{x}^{\ast }=\mathcal{A}^{\ast }\mathcal{\ \cap \ 
}T_{x}M,$ for any $x\in M,$ where%
\begin{equation*}
\mathcal{A}^{\ast }=\{(x,y)\in TM\text{ }|\text{ }\ ||y||_{h}-\eta h(y,W)<0\}
\end{equation*}%
is an open conic subset of $TM_{0},$ for any $C^{\infty }$-function $\eta
:M\rightarrow (0,1].$ It is worthwhile to mention that the inequality $\frac{%
h(y,W)}{||y||_{h}}>\frac{1}{\eta }$ is a necessary and sufficient condition
for $F_{1,2}(x,y)$ to be positive on $\mathcal{A}^{\ast }.$ When $\frac{%
h(y,W)}{||y||_{h}}<-\frac{1}{\eta },$ both solutions $F_{1,2}(x,y)$ are
negative.

Now, using the notations \eqref{NOT}, the expressions of the functions $%
F_{1,2}(x,y)$ are of Matsumoto type 
\begin{equation}
F_{1}(x,y)=\frac{\alpha ^{2}}{\alpha -\eta \beta }\text{ \ \ and \ \ }%
F_{2}(x,y)=-\frac{\alpha ^{2}}{\alpha +\eta \beta }  \label{Fstrong}
\end{equation}%
on the conic domain $\mathcal{A}^{\ast }$, rewritten as $\mathcal{A}^{\ast
}=\{(x,y)\in TM$ $|$ $\alpha +\eta \beta <0\}$. Actually $\mathcal{A}^{\ast
} $ is the interior of the set which includes all the rays starting at $0$
and crossing the indicatrices $I_{F_{1}}$ and $I_{F_{2}}$. By applying \cite[%
Corollary 4.15]{JS}, it turns out that both $F_{1,2}$ are strongly convex on 
$\mathcal{A}^{\ast }$ and thus, each of them is a conic Finsler metric on $%
\mathcal{A}^{\ast },$ for any $C^{\infty }$-function $\eta :M\rightarrow
(0,1].$ Indeed, for $F_{1,2}$ the strong convexity conditions $(\alpha \mp
2\eta \beta )(\alpha \mp \eta \beta )>0$ are satisfied for any $(x,y)\in 
\mathcal{A}^{\ast }$ and $C^{\infty }$-function $\eta :M\rightarrow (0,1].$
However, the closures 
of the indicatrix $I_{F_{1}}$ and $I_{F_{2}}\cup \{{0\}}$ do not glue at
their intersection with the boundary of $\mathcal{A}^{\ast }.$

One more case should be mentioned here, namely, when $\frac{|h(y,W)|}{||y||_{h}}=\frac{1}{\eta }$ for nonzero $%
y.$ It may occur when $\eta ||\mathcal{W}_{MAT}||_{h}=1$ and consequently, by
Okubo's method \cite{matsumoto}, we get the function $F(x,y)=\frac{1}{2}%
||y||_{h}$ with $y\in \mathcal{A}_{x}=\mathcal{A\ \cap \ }T_{x}M,$ for any $%
x\in M$, where by the set $\mathcal{A}$ we mean $\mathcal{A}=\{(x,y)~\in ~TM_{0}\ |\ ||y||_{h}-\eta
|h(y,W)|=0\}$. One sees immediately that  this case does not provide any Finsler
metric.\noindent

Therefore, we summarize the results obtained in this step, emphasizing that
the direction-dependent deformation of the background Riemannian metric $h$
by the vector field $\eta \mathcal{W}_{MAT}$, with $\eta ||\mathcal{W}%
_{MAT}||_{h}<1$ performed for any direction and for any $C^{\infty }$%
-function $\eta :M\rightarrow (0,1],$ supplies the Finsler metric $F(x,y)=%
\frac{\alpha ^{2}}{\alpha -\eta \beta }$ if and only if $||W||_{h}<\frac{1}{%
2\eta }$.

\paragraph{Step II: rigid translation}

By this step, we complete the proof of \cref{Thm1}. It remains to take into
consideration the effect produced by the addition of the scaled wind $%
(1-\eta )W$ which actually, generates a rigid translation to the strongly
convex indicatrix \eqref{indicatrix} provided by the equation of motion $%
v=u+\eta \mathcal{W}_{MAT}$ in the first step when $\eta ||\mathcal{W}%
_{MAT}||_{h}<1$. Essential to our arguments is \cref{Prop3}. More precisely,
we have to explore the Zermelo navigation on the Finsler manifold $(M,F)$ with
the navigation data $(F,(1-\eta )W)$, for any $C^{\infty }$-function $\eta
:M\rightarrow \lbrack 0,1]$, assuming the condition 
\begin{equation}
F(x,-(1-\eta )W)<1,  \label{CC}
\end{equation}%
where $F$ is either the Finsler metric \eqref{eta_mat}, having $||W||_{h}<%
\frac{1}{2\eta }\leq 1$, if $\eta (x)\in (0,1]$ for any $x\in M$, or the
background Riemannian metric $h,$ when $\eta (x)=0$ for some $x\in M.$ The
outcome will be the superwind metric, which arises as the unique
positive solution of the equation%
\begin{equation}
F(x,y-(1-\eta )\tilde{F}(x,y)W)=\tilde{F}(x,y),\text{ }  \label{II}
\end{equation}%
for any $(x,y)\in TM_{0}.$ In particular, if $\eta (x)=1$, for some $x\in M,$
we have the solution $\tilde{F}=\frac{\alpha ^{2}}{\alpha -\beta }$, which is actually
the Finsler metric $F$ from \eqref{eta_mat} with $\eta (x)=1.$ It is
worthwhile to mention that the requirement \eqref{CC} assures the following 
issues: the uniqueness of the superwind metric, the fact that its indicatrix
is strongly convex as well as the detail that $y=0$ belongs to the region
bounded by this indicatrix, for any $x\in M$ (see \cite[p. 10 and
Proposition 2.14]{CJS}).

From now on, by the Finsler metric $F$ which is involved in \eqref{II} and %
\eqref{CC}, we mean the function $F(x,y)=\frac{\alpha ^{2}}{\alpha -\eta \beta },$ for
any $C^{\infty }$-function $\eta :M\rightarrow \lbrack 0,1],$ including also
the above-mentioned particular possibilities for $\eta (x).$ In order to reach
the superwind metric as a Finsler metric, we first expand the left hand-side
of \eqref{II}. By substituting $y$ with $y-(1-\eta )\tilde{F}(x,y)W$ in %
\eqref{NOT} and making use of some algebraic manipulations, it follows that 
\begin{equation*}
\alpha ^{2}\left( x,y-(1-\eta )\tilde{F}(x,y)W\right) =\alpha
^{2}(x,y)+2(1-\eta )\beta (x,y)\tilde{F}(x,y)+(1-\eta )^{2}||W||_{h}^{2}%
\tilde{F}^{2}(x,y)
\end{equation*}%
and%
\begin{equation*}
\beta \left( x,y-(1-\eta )\tilde{F}(x,y)W\right) =\beta (x,y)+(1-\eta
)||W||_{h}^{2}\tilde{F}(x,y),
\end{equation*}%
where $\beta (x,W)=-||W||_{h}^{2}.$ Together with \eqref{II}, we get the
following irrational equation%
\begin{equation}
\tilde{F}\sqrt{\alpha ^{2}+2(1-\eta )\beta \tilde{F}+(1-\eta
)^{2}||W||_{h}^{2}\tilde{F}^{2}}=\alpha ^{2}+(2-\eta )\beta \tilde{F}%
+(1-\eta )||W||_{h}^{2}\tilde{F}^{2},  \label{MAMA_general}
\end{equation}%
for any $C^{\infty }$-function $\eta :M\rightarrow \lbrack 0,1],$ which can
be easily written as a polynomial equation 
\begin{equation}
\begin{array}{c}
\left( 1-\eta \right) ^{2}||W||_{h}^{2}(1-||W||_{h}^{2})\tilde{F}%
^{4}+2\left( 1-\eta \right) [1-\left( 2-\eta \right) ||W||_{h}^{2}]\beta 
\tilde{F}^{3} \\ 
~ \\ 
+\{[1-2\left( 1-\eta \right) ||W||_{h}^{2}]\alpha ^{2}-\left( 2-\eta \right)
^{2}\beta ^{2}\}\tilde{F}^{2}-2\left( 2-\eta \right) \alpha ^{2}\beta \tilde{%
F}-\alpha ^{4}=0.%
\end{array}
\label{MAMA_4}
\end{equation}%
Note that $\alpha $, $\beta $ and $\tilde{F}$ in \eqref{MAMA_general} and %
\eqref{MAMA_4} are evaluated at $(x,y)$. Moreover, if $\left( 1-\eta \right)
^{2}(1-||W||_{h}^{2})\neq 0$ then the equation \eqref{MAMA_4} admits four
roots. However, due to \eqref{CC}, among all roots of \eqref{MAMA_4}, there
is only a sole positive root, which actually must be the superwind metric.
From now on, for each $C^{\infty }$-function $\eta :M\rightarrow \lbrack
0,1],$ we denote by $\tilde{F}_{\eta }$ $\ $the superwind metric which
obviously satisfies \eqref{MAMA_general}.

We still face one serious obstacle for successfully carrying out the
argument, namely to control the condition \eqref{CC}, which assures that the
indicatrix of $\tilde{F}_{\eta }$ is strongly convex. Thus, $\tilde{F}_{\eta
}$ will collect all the requirements for a Finsler metric as well as
the fact that the $\tilde{F}_{\eta }$-geodesics locally minimize time.
Exploring this issue, we prove the result.

\begin{lemma}
\label{Lema3} The following statements are equivalent:
\begin{itemize}
\item [i)] for any $C^{\infty }$-function $\eta :M\rightarrow \lbrack 0,1]$%
, the indicatrix $I_{\tilde{F}_{\eta }}$ of the superwind metric $\tilde{F}%
_{\eta }$ is strongly convex;

\item [ii)] the wind $W$ is weak with either ~$||W||_{h}<1$ and $\eta
(x)\in \lbrack 0,\frac{1}{2}]$, or $||W||_{h}<\frac{1}{2\eta }$ and $\eta
(x)\in (\frac{1}{2},1],$ for any $x\in M;$

\item [iii)]  the superwind $\mathcal{W}_{\eta }$ given by \eqref{eq_superwind2x} is
restricted to either $||\mathcal{W}_{\eta }||_{h}<1$ and $\eta (x)\in
\lbrack 0,\frac{1}{2}]$, or $||\mathcal{W}_{\eta }||_{h}<\frac{1}{2\eta }$
and $\eta (x)\in (\frac{1}{2},1],$ for any $x\in M.$
\end{itemize}
\end{lemma}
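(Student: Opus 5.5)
The plan is to reduce strong convexity of $I_{\tilde{F}_{\eta }}$ to two checks, pointwise in $x$: Step I must give a Finsler metric $F$, and the Zermelo condition \eqref{CC} must hold. By \cref{Prop3} and the discussion after it (using \cite{CJS}), if $F$ is a Finsler metric then $\tilde F_\eta$ is Finsler exactly when $F(x,-(1-\eta)W)<1$. Conversely, a Zermelo translate has a strongly convex indicatrix containing $0$ only if the translated indicatrix $I_F$ was strongly convex and the translation kept $0$ inside, since translating back by $-(1-\eta)W$ recovers $I_F$. So i) is equivalent to two requirements holding at every $x$: $F$ is Finsler, and $F(x,-(1-\eta)W)<1$.

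\textbf{Step A (Finsler condition).} At points with $\eta(x)=0$, $F=h$ and there is no constraint. At points with $\eta(x)\in(0,1]$, \cref{Lema2} (equivalently \cref{Lema1}) says $F=\frac{\alpha^2}{\alpha-\eta\beta}$ is Finsler if and only if $\|W\|_h<\frac{1}{2\eta}$.

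\textbf{Step B (Zermelo condition).} I will evaluate $F(x,-(1-\eta)W)$ with $y=-(1-\eta)W$. Then $\alpha=(1-\eta)\|W\|_h$ and $\beta=-h(y,W)=(1-\eta)\|W\|_h^2$, so
\[
F(x,-(1-\eta)W)=\frac{(1-\eta)\|W\|_h}{1-\eta\|W\|_h}.
\]
This is valid because $\eta\|W\|_h<1$ by Step A. The requirement $F<1$ becomes $(1-\eta)\|W\|_h<1-\eta\|W\|_h$, that is, $\|W\|_h<1$. The case $\eta=1$ gives a zero translation and no condition.

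\textbf{Combining.} The two requirements together are $\|W\|_h<\min\{1,\frac{1}{2\eta}\}$. This minimum is $1$ for $\eta\in[0,\frac12]$ and $\frac{1}{2\eta}$ for $\eta\in(\frac12,1]$, which is exactly ii). For ii)$\Leftrightarrow$ iii), note that $\mathcal{W}_\eta=W$ whenever $u\parallel W$, and $\|\mathcal{W}_\eta\|_h\le\|W\|_h$ always. Hence the supremum of $\|\mathcal{W}_\eta\|_h$ over all directions $u$ equals $\|W\|_h$, so the bounds in iii), read for every admissible $u$, are equivalent to those in ii).

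The main obstacle is the necessity direction of the first paragraph. I need that a strongly convex $I_{\tilde F_\eta}$ forces $F$ to be Finsler and \eqref{CC} to hold. I would argue that $I_F$ is the rigid translate of $I_{\tilde F_\eta}$, so it is strongly convex, and then apply \cref{Lema2}. Positivity of $\tilde F_\eta$ on all directions, meaning $0$ lies inside $I_{\tilde F_\eta}$, is exactly $0$ lying inside the translated $I_F$, which is $F(x,-(1-\eta)W)<1$.
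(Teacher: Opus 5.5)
Your proposal is correct and follows the paper's proof essentially step for step. The paper also combines the strong convexity condition $||W||_h<\frac{1}{2\eta}$ from \cref{Lema2} with the explicit evaluation of \eqref{CC}, namely $\frac{(1-\eta)||W||_h}{1-\eta||W||_h}<1\Leftrightarrow ||W||_h<1$, and proves ii)$\Leftrightarrow$iii) from $||\mathcal{W}_\eta||_h\le ||W||_h$ with equality attained; your translate-back argument for the necessity direction is a useful addition, since the paper only asserts the ``if and only if'' at that point.
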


\begin{proof}
In order to prove the equivalence i) $\Leftrightarrow $ ii), we have to handle the requirement \eqref{CC}, for any $C^{\infty }$-function $\eta :M\rightarrow \lbrack 0,1]$%
. If $\eta (x)=1$, $x\in M,$  the inequality \eqref{CC} is obviously checked.
Now, some basic computations show that the condition \eqref{CC} with $F(x,y)=%
\frac{\alpha ^{2}}{\alpha -\eta \beta }$ leads to the inequality $\frac{%
(1-\eta )||W||_{h}}{1-\eta ||W||_{h}}<1,$ which is equivalent to $%
||W||_{h}<1,$ for any $C^{\infty }$-function $\eta :M\rightarrow \lbrack 0,1)
$.

Combining the last condition and the inequality $||W||_{h}<\frac{1}{%
2\eta }$ which refers to the strong convexity restriction for the indicatrix 
$I_{F}$, for any $C^{\infty }$-function $\eta :M\rightarrow (0,1]$, it turns
out that the indicatrix $I_{\tilde{F}_{\eta }}$ is strongly convex if and
only if either $||W||_{h}<1$ and $\eta (x)\in \lbrack 0,\frac{1}{2}]$, or $%
||W||_{h}<\frac{1}{2\eta }$ and $\eta (x)\in (\frac{1}{2},1],$ for any $x\in
M.$ Since $\frac{1}{2\eta }<1$, for any $\eta (x)\in (\frac{1}{2},1],$ we
outline that the wind $W$ is weak for any $C^{\infty }$-function $\eta
:M\rightarrow \lbrack 0,1]$.

\medskip The proof of \noindent ii) $\Leftrightarrow $ iii) is based on the
the remark that $||\mathcal{W}_{\eta }||_{h}\leq ||W||_{h},$ for any $%
C^{\infty }$-function $\eta :M\rightarrow \lbrack 0,1]$ and moreover, the
maximum of $||\mathcal{W}_{\eta }||_{h}$ coincides with $||W||_{h}.$ 
\end{proof}

Accordingly to \cref{Lema3}, we conclude that the force of the superwind $%
\mathcal{W}_{\eta }$ can be described in terms of the common wind $W,$ by
the inequality $||W||_{h}<\tilde{b}_{0}$ in each navigation problem $%
\mathcal{P}_{\eta }$, where we regard $\tilde{b}_{0}$ as being defined by
\begin{equation}
\ \tilde{b}_{0}=\left\{ 
\begin{array}{cc}
1, & \text{if \ }\eta (x)\in \lbrack 0,\frac{1}{2}] \\ 
\frac{1}{2\eta }, & \text{if \ }\eta (x)\in (\frac{1}{2},1]%
\end{array}%
,\right.  \label{Strong_C}
\end{equation}%
for any $x\in M.$ Clearly, $\tilde{b}_{0} \leq 1$ for any $\eta (x)\in \lbrack 0,1].$

The results obtained in \textit{steps I} and \textit{II} carry out the proof of \cref{Thm1}.

We end this subsection underlying that, to solve the navigation problem $%
\mathcal{P}_{\eta }$, for any $C^{\infty }$-function $\eta :M\rightarrow
\lbrack 0,1]$, the implicit form of the superwind metric, provided by the equation \eqref{MAMA_4} is sufficient, as will be shown in the next section.
Obviously, \eqref{MAMA_general} includes two classical Finsler metrics. First,
if $\eta (x)=1$ for any $x\in M,$ then \eqref{MAMA_general} yields the
Matsumoto metric $\tilde{F}(x,y)=\frac{\alpha ^{2}}{\alpha -\beta },$ with $%
||W||_{h}<\frac{1}{2}.$ Second, if $\eta (x)=0$ for any $x\in M$, the
equation \eqref{MAMA_general} leads to 
\begin{equation}
\tilde{F}\sqrt{\alpha ^{2}+2\beta \tilde{F}+||W||_{h}^{2}\tilde{F}^{2}}%
=\alpha ^{2}+2\beta \tilde{F}+||W||_{h}^{2}\tilde{F}^{2}.
\label{RANDERS_mama}
\end{equation}%
Since $\alpha ^{2}+2\beta \tilde{F}+||W||_{h}^{2}\tilde{F}^{2}>0$, %
the equation \eqref{RANDERS_mama} is reduced to%
\begin{equation*}
(1-||W||_{h}^{2})\tilde{F}^{2}-2\beta \tilde{F}-\alpha ^{2}=0,
\end{equation*}%
which admits only the positive root $\tilde{F}(x,y)=\frac{\sqrt{\alpha
^{2}(1-||W||_{h}^{2})+\beta ^{2}}+\beta }{1-||W||_{h}^{2}},$ under a weak
wind $W,$  i.e., $||W||_{h}<1.$ Using the notations 
\begin{equation*}
\tilde{\alpha}^{2}=\frac{\alpha ^{2}}{1-||W||_{h}^{2}}+\tilde{\beta}%
^{2},\quad \text{ where }\quad \tilde{\beta}=\frac{\beta }{1-||W||_{h}^{2}},
\end{equation*}%
we get the Randers metric $\tilde{F}(x,y)=\tilde{\alpha}+\tilde{\beta}$
which solves Zermelo's navigation problem under the weak wind $W.$

\subsection{Time geodesics: proof of \cref{Thm2}}

\label{Subsec_3.2}We start by mentioning a few ideas involved in the proof of \cref{Thm2}.
Essential for our argument are two facts. The first is coming from the
feature of the superwind metric. It can be referred to as a Finsler metric which
belongs to the extension of the general $(\alpha ,\beta )$-metrics,
introduced in \cref{Subsec_2.1}. Thus, by applying \cref{Prop2} together with
some technical computation, we achieve the spray coefficients related to the
superwind metric $\tilde{F}_{\eta }.$ Further on, in view of \eqref{geo1},
it is immediately possible  to supply the equations of time geodesics of $\tilde{F}%
_{\eta }$. The second fact refers to the property that any such time
geodesic $\gamma $ has a unit length with respect to $\tilde{F}_{\eta }$, i.e., $\tilde{F}_{\eta }(\gamma (t),\dot{\gamma}(t))=1$. This is because,
above all, it is a trajectory in Zermelo's navigation developed in  \cref{Subsec_3.1} (\textit{step II}). Therefore, the time-minimal paths on $(M,h)$ under
the action of the superwind $\mathcal{W}_{\eta },$ with $\eta=\eta (x)\in [0,1],$ for any $x\in
M,$ will be derived by the
equations of time geodesics of $\tilde{F}_{\eta },$ using that along
them $\tilde{F}_{\eta }$ $\ $is equal to $1$. Furthermore, the implicit
formula \eqref{MAMA_4} for $\tilde{F}_{\eta }$ will be sufficient to proceed with the
proof. In view of all this, \cref{Thm2} follows.

We first fix an auxiliary result for the superwind metric $\tilde{F}_{\eta }$%
, which will be important in the sequel.

\begin{lemma}
\label{PropXX}For any $C^{\infty }$-function $\eta :M\rightarrow \lbrack
0,1],$ the superwind metric $\tilde{F}_{\eta }$ is an extension of the
general $(\alpha ,\beta )$-metrics.
\end{lemma}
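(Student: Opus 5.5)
We have to show that $\tilde{F}_{\eta }$ can be written as $\alpha \phi (b^{2},s,\eta )$, with $\phi $ a positive $C^{\infty }$-function of $b^{2}=||W||_{h}^{2}$, $s=\beta /\alpha $ and $\eta =\eta (x)$. The plan is to exploit the homogeneity of the defining equation \eqref{MAMA_general}, equivalently \eqref{MAMA_4}.

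First, set $\tilde{F}_{\eta }=\alpha \phi $, where $\phi :=\tilde{F}_{\eta }/\alpha $ on $TM_{0}$. Substituting into \eqref{MAMA_4} and dividing by $\alpha ^{4}>0$ (every term has total degree four in $(\alpha ,\beta ,\tilde{F})$) gives
\begin{equation*}
\mathcal{H}(\phi ;b^{2},s,\eta ):=(1-\eta )^{2}b^{2}(1-b^{2})\phi ^{4}+2(1-\eta )[1-(2-\eta )b^{2}]s\phi ^{3}+\{1-2(1-\eta )b^{2}-(2-\eta )^{2}s^{2}\}\phi ^{2}-2(2-\eta )s\phi -1=0.
\end{equation*}
Doing the same in \eqref{MAMA_general} gives $\phi \sqrt{1+2(1-\eta )s\phi +(1-\eta )^{2}b^{2}\phi ^{2}}=1+(2-\eta )s\phi +(1-\eta )b^{2}\phi ^{2}$. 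This depends on $(x,y)$ only through $b^{2}$, $s$ and $\eta $. By \cref{Thm1}, for $||W||_{h}<\tilde{b}_{0}$ as in \eqref{Strong_C}, condition \eqref{CC} holds, and this irrational equation has a unique positive root. That root is exactly $\phi (b^{2},s,\eta )=\tilde{F}_{\eta }(x,y)/\alpha $. Positivity is immediate, and the admissible range is $|s|\leq b<\tilde{b}_{0}$ by Cauchy--Schwarz.

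Next comes smoothness, which is the main obstacle. I would apply the implicit function theorem to $\mathcal{H}$ at the positive root, so the key point is $\partial _{\phi }\mathcal{H}\neq 0$ there. I would try to get this not from the quartic directly but from the Zermelo construction. In \cref{Prop3}, \eqref{II} defines $\tilde{F}$ as the zero of $G(\tilde{F},y)=F(x,y-(1-\eta )\tilde{F}W)-\tilde{F}$. Then $\partial G/\partial \tilde{F}=-(1-\eta )\,dF_{z}(W)-1$ with $z=y-(1-\eta )\tilde{F}W$. By convexity and homogeneity of $F$, $-dF_{z}(W)\leq F(x,-W)$ at $F(z)=\tilde{F}$ after rescaling, so \eqref{CC} gives $(1-\eta )F(x,-W)<1$ and hence $\partial G/\partial \tilde{F}<0$. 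The irrational form is smooth wherever the radicand is positive, and the radicand is $\alpha ^{2}(z)/\alpha ^{2}>0$ since $z\neq 0$. So the root depends smoothly on $(b^{2},s,\eta )$, which also covers the endpoint cases $\eta =0$ (Randers) and $\eta =1$ (Matsumoto). If the transversality argument proved delicate, I would fall back on checking directly that the positive root is a simple root of $\mathcal{H}$. A double positive root would contradict the uniqueness statement of \cref{Prop3} under small perturbations of $y$.

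Finally, I would remark that $\phi $ genuinely depends on $\eta $ whenever $\eta $ is non-constant, because $\eta $ enters the coefficients of $\mathcal{H}$ independently of $b^{2}$ and $s$. So $\tilde{F}_{\eta }=\alpha \phi (b^{2},s,\eta )$ lies in the class introduced in \cref{Subsec_2.2}, and \cref{Prop0,Prop1,Prop2} become applicable. Strong convexity is already secured by \cref{Lema3}, so the conditions of \cref{Prop1} hold automatically and need not be rechecked.
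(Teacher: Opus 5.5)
Your proposal is correct and follows the paper's own proof: divide \eqref{MAMA_4} by $\alpha^{4}$ to obtain \eqref{PHI}, observe that its coefficients depend only on $(||W||_{h}^{2},s,\eta)$, and let $\tilde{\phi}_{\eta}$ be its unique positive root, so that $\tilde{F}_{\eta}=\alpha\tilde{\phi}_{\eta}(||W||_{h}^{2},s,\eta)$. The paper only asserts that $\tilde{\phi}_{\eta}$ is $C^{\infty}$, so your implicit-function argument genuinely fills that gap: it applies to $G/\alpha$, which depends only on $(\phi,||W||_{h}^{2},s,\eta)$ and satisfies $\partial_{\phi}(G/\alpha)=\partial_{\tilde F}G\le(1-\eta)F(x,-W)-1<0$ by the fundamental inequality, and this shows the positive root of \eqref{PHI} is simple (indeed $C>0$ in the notation of \cref{Lema4}), which makes your vague ``double root under perturbation'' fallback unnecessary.
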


\begin{proof}
Let us consider the notations $\tilde{\phi}=\frac{\tilde{F}}{\alpha }$ and $%
s=\frac{\beta }{\alpha }.$ If we divide \eqref{MAMA_4} by $\alpha ^{4},$
this leads to the following equation 
\begin{equation}
\begin{array}{c}
(1-\eta )^{2}||W||_{h}^{2}(1-||W||_{h}^{2})\tilde{\phi}^{4}+2(1-\eta
)[1-(2-\eta )||W||_{h}^{2}]s\tilde{\phi}^{3} \\ 
~ \\ 
+[1-2(1-\eta )||W||_{h}^{2}-(2-\eta )^{2}s^{2}]\tilde{\phi}^{2}-2(2-\eta )s%
\tilde{\phi}-1=0,%
\end{array}
\label{PHI}
\end{equation}%
which is equivalent to \eqref{MAMA_4}. Since $\tilde{F}_{\eta }$ is the
unique positive root of \eqref{MAMA_4}, it follows that the equation \eqref{PHI} admits a
sole positive root, denoted by $\tilde{\phi}_{\eta },$ for each $C^{\infty }$%
-function $\eta :M\rightarrow \lbrack 0,1]$ and $\tilde{F}_{\eta }(x,y)=\alpha \tilde{\phi}%
_{\eta }(||W||_{h}^{2},s,\eta ).$  We conclude
that $\tilde{\phi}_{\eta }$ is a $C^{\infty }$-function which depends on the variables: $||W||_{h}^{2}$, $s=%
\frac{\beta }{\alpha }$ and $\eta =\eta (x),$ where $\alpha $ and $\beta $
are given by \eqref{NOT}. Therefore, the claim is true. 
\end{proof}

In view of \cref{PropXX} and \eqref{PHI}, the function $\tilde{\phi}_{\eta }=%
\tilde{\phi}_{\eta }(||W||_{h}^{2},s,\eta )$ checks the following identity 
\begin{equation}
\begin{array}{c}
(1-\eta )^{2}||W||_{h}^{2}(1-||W||_{h}^{2})\tilde{\phi}_{\eta }^{4}+2(1-\eta
)[1-(2-\eta )||W||_{h}^{2}]s\tilde{\phi}_{\eta }^{3} \\ 
~ \\ 
+[1-2(1-\eta )||W||_{h}^{2}-(2-\eta )^{2}s^{2}]\tilde{\phi}_{\eta
}^{2}-2(2-\eta )s\tilde{\phi}_{\eta }-1=0,%
\end{array}
\label{SS3.3}
\end{equation}%
which plays an important role in the proof of \cref{Thm2}. More precisely,
it is the key tool to establish some relations between the function $\tilde{%
\phi}_{\eta }$ and its derivatives $\tilde{\phi}_{\eta 1},$ $\tilde{\phi}%
_{\eta 2},$ $\tilde{\phi}_{\eta 3},$ $\tilde{\phi}_{\eta 12},$ $\tilde{\phi}%
_{\eta 22}$ and $\tilde{\phi}_{\eta 32}$.

\begin{lemma}
\label{Lema4} Let $M$ be an $n$-dimensional $C^{\infty }$-manifold, $n>1,$
with the superwind metric $\tilde{F}_{\eta }(x,y)=\alpha \tilde{\phi}_{\eta
}(||W||_{h}^{2},s,\eta ).$ For any $C^{\infty }$-function $\eta
:M\rightarrow \lbrack 0,1],$ the function $\tilde{\phi}_{\eta }$ and its
derivative with respect to $s,$ i.e., $\tilde{\phi}_{\eta 2}$ hold the
following relations:%
\begin{equation}
\begin{array}{c}
C\tilde{\phi}_{\eta 2}=A\tilde{\phi}_{\eta },\qquad C(\tilde{\phi}_{\eta }-s%
\tilde{\phi}_{\eta 2})=B, \\~\\
C\tilde{\phi}_{\eta }=B+sA\tilde{\phi}_{\eta },\qquad (2-\eta )B-2A=-\eta 
\tilde{\phi}_{\eta }^{2}%
\end{array}
\label{SS3.1}
\end{equation}%
where%
\begin{equation}
\begin{array}{l}
A=-(1-\eta )[1-(2-\eta )||W||_{h}^{2}]\tilde{\phi}_{\eta }^{2}+(2-\eta )^{2}s%
\tilde{\phi}_{\eta }+2-\eta , \\ 
~ \\ 
B=-[1-2(1-\eta )||W||_{h}^{2}]\tilde{\phi}_{\eta }^{2}+2(2-\eta )s\tilde{\phi%
}_{\eta }+2, \\ 
~ \\ 
\begin{split}
C=& \ 2(1-\eta )^{2}||W||_{h}^{2}(1-||W||_{h}^{2})\tilde{\phi}_{\eta
}^{3}+3(1-\eta )[1-(2-\eta )||W||_{h}^{2}]s\tilde{\phi}_{\eta }^{2} \\
& +[1-2(1-\eta )||W||_{h}^{2}-(2-\eta )^{2}s^{2}]\tilde{\phi}_{\eta
}-(2-\eta )s,
\end{split}%
\end{array}
\label{SS3.2}
\end{equation}%
and $A$, $B$, $C$ are evaluated at $(||W||_{h}^{2},s,\eta ).$
\end{lemma}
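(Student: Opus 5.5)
The plan is to obtain all four relations in \eqref{SS3.1} from the single identity \eqref{SS3.3}. I will differentiate it implicitly with respect to $s$ and then simplify using \eqref{SS3.3} itself. Write \eqref{SS3.3} as $\mathcal{P}(\tilde{\phi}_{\eta },s)=0$, with $||W||_{h}^{2}$ and $\eta$ held fixed, since they do not depend on $s$. Abbreviate the coefficients as
$a=(1-\eta )^{2}||W||_{h}^{2}(1-||W||_{h}^{2})$, $b=(1-\eta )[1-(2-\eta )||W||_{h}^{2}]$ and $c=1-2(1-\eta )||W||_{h}^{2}-(2-\eta )^{2}s^{2}$.
By \cref{PropXX}, $\tilde{\phi}_{\eta }$ is a $C^{\infty }$ function of $(||W||_{h}^{2},s,\eta )$, so differentiating $\mathcal{P}(\tilde{\phi}_{\eta }(s),s)=0$ in $s$ is legitimate.

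\textbf{First relation.} The chain rule gives $\partial _{\phi }\mathcal{P}\,\tilde{\phi}_{\eta 2}+\partial _{s}\mathcal{P}=0$. A direct comparison with \eqref{SS3.2} yields
$\partial _{\phi }\mathcal{P}=4a\tilde{\phi}_{\eta }^{3}+6bs\tilde{\phi}_{\eta }^{2}+2c\tilde{\phi}_{\eta }-2(2-\eta )s=2C$.
For the $s$-derivative, note that $c$ also depends on $s$. Then
$\partial _{s}\mathcal{P}=2b\tilde{\phi}_{\eta }^{3}-2(2-\eta )^{2}s\tilde{\phi}_{\eta }^{2}-2(2-\eta )\tilde{\phi}_{\eta }=-2\tilde{\phi}_{\eta }A$.
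Together these give $C\tilde{\phi}_{\eta 2}=A\tilde{\phi}_{\eta }$.

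\textbf{Second and third relations.} Multiplying the first relation by $s$ shows that $C(\tilde{\phi}_{\eta }-s\tilde{\phi}_{\eta 2})=B$ and $C\tilde{\phi}_{\eta }=B+sA\tilde{\phi}_{\eta }$ are the same statement. So it suffices to prove $C\tilde{\phi}_{\eta }-sA\tilde{\phi}_{\eta }=B$. I will expand the left-hand side as a polynomial in $\tilde{\phi}_{\eta }$ and use \eqref{SS3.3} to eliminate the highest power, $2a\tilde{\phi}_{\eta }^{4}$. After this substitution the constant $2$, the term $2(2-\eta )s\tilde{\phi}_{\eta }$ and the coefficient $-[1-2(1-\eta )||W||_{h}^{2}]$ of $\tilde{\phi}_{\eta }^{2}$ appear exactly as in $B$.

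\textbf{Fourth relation.} The identity $(2-\eta )B-2A=-\eta \tilde{\phi}_{\eta }^{2}$ is purely algebraic and does not use \eqref{SS3.3}. The terms in $s\tilde{\phi}_{\eta }$ and the constants cancel directly. The $||W||_{h}^{2}$-parts of the $\tilde{\phi}_{\eta }^{2}$-coefficient also cancel, leaving $-(2-\eta )+2(1-\eta )=-\eta$.

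\textbf{Main obstacle.} The delicate step is the second relation. In my first pass of the reduction, a residual term of the form $2bs\tilde{\phi}_{\eta }^{3}$ survives after eliminating the quartic term. This must be absorbed, and I am not yet sure how. I would first recheck the coefficient bookkeeping, in particular the factor $3b$ in $C$ against the $-b$ contribution in $sA$. If a cubic term genuinely remains, I would try to remove it with another multiple of \eqref{SS3.3} combined with the first relation. Failing that, I would reread $B$ as $C(\tilde{\phi}_{\eta }-s\tilde{\phi}_{\eta 2})$ modulo \eqref{SS3.3} and adjust the explicit form accordingly. The other three identities are routine.
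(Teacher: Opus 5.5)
Your route is the paper's: differentiate \eqref{SS3.3} in $s$ to get $C\tilde{\phi}_{\eta 2}=A\tilde{\phi}_{\eta}$, then obtain the remaining identities algebraically with the help of \eqref{SS3.3}. Your verification of the first relation is correct, and so is that of the fourth, which needs only the notation \eqref{SS3.2}. Your observation that the second and third relations are equivalent once the first holds is also correct.

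The ``main obstacle'' you flag is not real: it comes from a sign slip. Do not adjust $B$; the stated $B$ is right, and it is used verbatim in \cref{Lema44,Lema5}. In your notation $a,b,c$, the polynomial $A$ contains $-b\tilde{\phi}_{\eta}^{2}$, so $-sA\tilde{\phi}_{\eta}$ contributes $+bs\tilde{\phi}_{\eta}^{3}$, not $-bs\tilde{\phi}_{\eta}^{3}$. Hence
\[
C\tilde{\phi}_{\eta}-sA\tilde{\phi}_{\eta}=2a\tilde{\phi}_{\eta}^{4}+4bs\tilde{\phi}_{\eta}^{3}+[c-(2-\eta)^{2}s^{2}]\tilde{\phi}_{\eta}^{2}-2(2-\eta)s\tilde{\phi}_{\eta}.
\]
Twice \eqref{SS3.3} gives $2a\tilde{\phi}_{\eta}^{4}=-4bs\tilde{\phi}_{\eta}^{3}-2c\tilde{\phi}_{\eta}^{2}+4(2-\eta)s\tilde{\phi}_{\eta}+2$. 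Substituting this, the cubic terms cancel identically, and what remains is
\[
-[c+(2-\eta)^{2}s^{2}]\tilde{\phi}_{\eta}^{2}+2(2-\eta)s\tilde{\phi}_{\eta}+2=-[1-2(1-\eta)||W||_{h}^{2}]\tilde{\phi}_{\eta}^{2}+2(2-\eta)s\tilde{\phi}_{\eta}+2=B.
\]
No second use of \eqref{SS3.3} is needed. With this correction your argument is complete and coincides with the paper's.
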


\begin{proof}
By differentiating \eqref{SS3.3} with respect to $s,$ we get the first
relation from \eqref{SS3.1}. Then, in view of this,  the second
identity of \eqref{SS3.1} follows. The last two identities result from the notations %
\eqref{SS3.2} and \eqref{SS3.3}. 
\end{proof}

Moreover, since we work under the condition $||W||_{h}<\tilde{b}_{0},$ with $%
\tilde{b}_{0}$ defined in \eqref{Strong_C}, which according to \cref{Thm1}%
, assures that the indicatrix $I_{\tilde{F}_{\eta }}$ is strongly convex,
we can apply the direct implication of \cref{Prop1}. Thus, for any $%
C^{\infty }$-function $\eta :M\rightarrow \lbrack 0,1]$ and $s$ satisfying $%
|s|\leq ||W||_{h}<\tilde{b}_{0}$, the validity of the following inequalities
is guaranteed 
\begin{equation*}
\tilde{\phi}_{\eta }-s\tilde{\phi}_{\eta 2}>0,\qquad \tilde{\phi}_{\eta }-s%
\tilde{\phi}_{\eta 2}+(||W||_{h}^{2}-s^{2})\tilde{\phi}_{\eta 22}>0,
\end{equation*}%
when $n\geq 3,$ or only the right-hand side inequality, when $n=2$. Our
purpose now is to evaluate the expression $\tilde{\phi}_{\eta }-s\tilde{\phi}%
_{\eta 2}$ which corresponds to the superwind metric (i.e., the second
formula in \eqref{SS3.1}) for any dimension $n>1.$

\begin{lemma}
\label{Lema44} Let $M$ be an $n$-dimensional $C^{\infty }$-manifold, $n>1,$
with the superwind metric $\tilde{F}_{\eta }(x,y)=\alpha \tilde{\phi}_{\eta
}(||W||_{h}^{2},s,\eta ).$ For any $C^{\infty }$-function $\eta
:M\rightarrow \lbrack 0,1]$, the following statements hold:
\begin{itemize}
\item[i)] $C(||W||_{h}^{2},s,\eta )\neq 0$;

\item[ii)]  $B(||W||_{h}^{2},s,\eta )\neq 0$.
\end{itemize}
\end{lemma}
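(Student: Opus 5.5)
The plan is to derive both claims from the relations of \cref{Lema4} together with facts already established: $\tilde{\phi}_{\eta}>0$, and, for $||W||_{h}<\tilde{b}_{0}$, the regularity consequences of \cref{Thm1} via \cref{Prop1}.

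\textbf{Item i).} Observe first that $C$ is exactly the derivative of the left-hand side of \eqref{SS3.3} with respect to $\tilde{\phi}$, divided by $2$. Call that quartic polynomial $p(\tilde{\phi})$. Since $\tilde{\phi}_{\eta}$ is the unique positive root of $p$ and is smooth in $s$, the natural argument is by simplicity of the root. We have $p(0)=-1<0$, and $p$ changes sign exactly once on $(0,\infty)$. One way to see the second fact is that \eqref{MAMA_general} arises from the Zermelo equation \eqref{II}, where the map $\lambda\mapsto F(x,y-\lambda(1-\eta)W)-\lambda$ is strictly decreasing under \eqref{CC}.

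To make this rigorous I would argue by contradiction. Suppose $C=0$ at some point. The first relation of \eqref{SS3.1} then gives $A\tilde{\phi}_{\eta}=0$, so $A=0$. The third relation gives $B=C\tilde{\phi}_{\eta}-sA\tilde{\phi}_{\eta}=0$. Substituting into the last relation yields $0=(2-\eta)B-2A=-\eta\tilde{\phi}_{\eta}^{2}$, which forces $\eta=0$. In the remaining case $\eta(x)=0$, the metric is the explicit Randers metric computed at the end of \cref{Subsec_3.1}, where $\tilde{\phi}_{\eta}=\bigl(\sqrt{(1-||W||_{h}^{2})+s^{2}}+s\bigr)/(1-||W||_{h}^{2})$. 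A direct check shows $B=2-2s\tilde{\phi}-(1-2||W||^{2}_{h})\tilde{\phi}^{2}\neq0$, or alternatively $C\neq 0$ follows from the explicit $p$ factoring as $(\tilde{\phi}^{2}\cdots)$. This direct verification is where I expect most of the work. Since $|s|\le ||W||_h<1$, nonvanishing should follow from $\tilde{\phi}$ being a simple root of the quadratic factor $(1-||W||^2_h)\tilde{\phi}^2-2s\tilde{\phi}-1$, whose discriminant $4(s^{2}+1-||W||^{2}_{h})$ is positive.

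\textbf{Item ii).} Once $C\neq0$, the second relation of \eqref{SS3.1} gives $B=C(\tilde{\phi}_{\eta}-s\tilde{\phi}_{\eta 2})$. For $n\ge3$, \cref{Prop1} provides $\tilde{\phi}_{\eta}-s\tilde{\phi}_{\eta2}>0$, so $B\neq 0$ immediately.

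For $n=2$, note that the function $\tilde{\phi}_{\eta}$ does not depend on $n$. Its defining identity \eqref{SS3.3} is dimension-free, and the admissible range $|s|\le ||W||_{h}<\tilde{b}_0$ is the same in every dimension. So the inequality obtained in dimension $3$, for instance on $M\times\mathbb{R}$ with the same data, transfers to $n=2$.

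An alternative route for ii) is the contradiction argument again. If $B=0$, then $A=\tfrac{\eta}{2}\tilde{\phi}^{2}_{\eta}$ and $C\tilde{\phi}_{\eta}=sA\tilde{\phi}_{\eta}$, which can be checked against \eqref{SS3.3}. I would use this only as a backup.

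The main obstacle is the degenerate case $\eta=0$ in i), together with making sure the argument is not circular. It must not use $C\neq0$ before differentiability is established. I would handle this by invoking the implicit function theorem only after proving $C\neq 0$ from the simple-root and sign-change property of $p$.
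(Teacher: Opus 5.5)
\textbf{Gap in the case $\eta(x)=0$ of i).} Your reduction in i) works for $\eta(x)\neq0$ and is quicker than the paper's. From $C=0$ you get $A=B=0$, and the purely algebraic identity $(2-\eta)B-2A=-\eta\tilde{\phi}_{\eta}^{2}$ forces $\eta=0$. The paper instead substitutes $A=B=0$ into \eqref{SS3.3} and obtains $(1-\eta)^{2}\|W\|_{h}^{2}(1-\|W\|_{h}^{2})\tilde{\phi}_{\eta}^{4}+[(2-\eta)s\tilde{\phi}_{\eta}+1]^{2}=0$. That is impossible for every $\eta$, including $\eta=0$.

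The case $\eta(x)=0$, however, is not proved as you wrote it.
\begin{itemize}
\item Your formula for $B$ is wrong. At $\eta=0$ one has $B=A=2+4s\tilde{\phi}-(1-2\|W\|_{h}^{2})\tilde{\phi}^{2}$, so your identity gives nothing there.
\item Simplicity of $\tilde{\phi}$ as a root of $q_{1}=(1-\|W\|_{h}^{2})\tilde{\phi}^{2}-2s\tilde{\phi}-1$ does not give $C\neq0$. At $\eta=0$ the quartic factors as $p=q_{1}q_{2}$ with $q_{2}=\|W\|_{h}^{2}\tilde{\phi}^{2}+2s\tilde{\phi}+1$. Hence $2C=p'(\tilde{\phi})=q_{1}'(\tilde{\phi})\,q_{2}(\tilde{\phi})$, and you still need $q_{2}(\tilde{\phi})\neq0$.
\item That last fact is not automatic. $q_{2}$ has discriminant $4(s^{2}-\|W\|_{h}^{2})\leq0$ and vanishes at the positive value $1/\|W\|_{h}$ when $s=-\|W\|_{h}$.
\end{itemize}
The missing fact is nevertheless true. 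One has $q_{2}(\tilde{\phi})=\|y-\tilde{F}_{\eta}W\|_{h}^{2}/\alpha^{2}>0$, since $y=\tilde{F}_{\eta}W$ would force $\tilde{F}_{\eta}=\|0\|_{h}=0$; on the Randers root one even has $B=q_{2}(\tilde{\phi})$. The simplest repair is the paper's. With $A=B=0$ and $\eta=0$, \eqref{SS3.3} becomes $\|W\|_{h}^{2}(1-\|W\|_{h}^{2})\tilde{\phi}^{4}+(2s\tilde{\phi}+1)^{2}=0$. This is impossible because $\|W\|_{h}<1$, and $s=0$ if $W=0$.

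\textbf{Circularity.} Settle this differently from what you propose. The first relation in \eqref{SS3.1} only needs $\tilde{\phi}_{\eta}$ to be differentiable in $s$, and \cref{Thm1} and \cref{PropXX} supply this. $\tilde{F}_{\eta}$ is smooth because it comes from the implicit function theorem applied to \eqref{II}. There the $\lambda$-derivative $dF_{z}(-(1-\eta)W)-1\leq F(x,-(1-\eta)W)-1$ is negative by \eqref{CC}. Smoothness should not be derived from a sign-change property of $p$, for two reasons:
\begin{itemize}
\item a single sign change does not imply a simple root;
\item $p$ need not have a unique positive root. For $\eta<1$ and $W\neq0$, at $s=-\|W\|_{h}$ the squaring artifact $1/((1-\eta)\|W\|_{h})$ is a second positive root.
\end{itemize}
With smoothness taken from \cref{Thm1}, your argument is not circular.

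\textbf{Item ii).} Your proof is correct and takes a different route from the paper's. The paper assumes $B=0$ at some $\tilde{s}$ and splits into cases according to whether $1-2(1-\eta)\|W\|_{h}^{2}$ vanishes. It produces an explicit $\tilde{s}^{2}$ that violates $0<\tilde{s}^{2}\leq\|W\|_{h}^{2}$.

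You argue in three steps:
\begin{itemize}
\item $\tilde{\phi}_{\eta}$ does not depend on $n$;
\item strong convexity on a three-dimensional model such as $M\times\mathbb{R}$ gives $\tilde{\phi}_{\eta}-s\tilde{\phi}_{\eta2}>0$, by restricting $g_{ij}$ to a vector $\alpha$-orthogonal to both $y$ and $b$;
\item hence $B=C(\tilde{\phi}_{\eta}-s\tilde{\phi}_{\eta2})\neq0$.
\end{itemize}
Your route is shorter and also shows that $B$ and $C$ have the same sign. It relies, however, on the full geometric content of \cref{Thm1} in dimension $3$. The paper's route is purely algebraic and uses only the quartic and the bound $\|W\|_{h}<\tilde{b}_{0}$.
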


\begin{proof} In order to prove i) we assume by contradiction that there exists $%
s_{0}\in \lbrack -b,b]$  such that $C(||W||_{h}^{2},s_{0},\eta )=0,$ where $b=||W||_{h}<\tilde{b}_{0}$ and $\tilde{b}_{0}$
is given by \eqref{Strong_C}. In
view of our assumption and \eqref{SS3.1}, it follows that $%
A(||W||_{h}^{2},s_{0},\eta )=B(||W||_{h}^{2},s_{0},\eta )=0.$ After plugging these
into \eqref{SS3.3}, we arrive at%
\begin{equation}
(1-\eta )^{2}||W||_{h}^{2}(1-|||W||_{h}^{2})\tilde{\phi}_{\eta
}^{4}(||W||_{h}^{2},s_{0},\eta )+[(2-\eta )s_{0}\tilde{\phi}_{\eta
}(||W||_{h}^{2},s_{0},\eta )+1]^{2}=0,  \label{SS3.4}
\end{equation}%
which provides a contradiction. Indeed, since $||W||_{h}<1$ and $\tilde{\phi}%
_{\eta }(||W||_{h}^{2},s_0,\eta )>0,$ it turns out that $(1-\eta
)^{2}||W||_{h}^{2}(1-|||W||_{h}^{2})\tilde{\phi}_{\eta
}^{4}(||W||_{h}^{2},s_{0},\eta )\neq 0$ if $\eta (x)\neq 1$, for any $x\in M$. So, \eqref{SS3.4} is contradicted. If $\eta (x)=1$ for some $x\in M$, then
the left hand-side of \eqref{SS3.4} is reduced to $(s_{0}\tilde{\phi}_{\eta
}(||W||_{h}^{2},s_{0},\eta )+1)^{2}=\frac{1}{(1-s_{0})^{2}}\neq 0$ which
contradicts \eqref{SS3.4}. Thus, it is shown that $C\neq 0$ everywhere.

To prove the statement ii), we again assume towards a
contradiction that there is $\tilde{s}\in \lbrack -b,b],$ such that $%
B(||W||_{h}^{2},\tilde{s},\eta )=0,$ where $b=||W||_{h}<%
\tilde{b}_{0}$ and $\tilde{b}_{0}$ is given by \eqref{Strong_C}.  Therefore, we are searching for such an $\tilde{s}\in \lbrack -b,b].$ Let us take $s=\tilde{s}$ in the third formula
in \eqref{SS3.1}. Since $\tilde{\phi}_{\eta }(||W||_{h}^{2},\tilde{s},\eta
)>0,$ $C(||W||_{h}^{2},\tilde{s},\eta )\neq 0$ and $B(||W||_{h}^{2},\tilde{s}%
,\eta )=0,$ one has that $\tilde{s}\neq 0.$ Moreover, by the second formula
in \eqref{SS3.2}, $\tilde{\phi}_{\eta }(||W||_{h}^{2},\tilde{s},\eta )$
satisfies the polynomial equation%
\begin{equation}
\lbrack 1-2(1-\eta )||W||_{h}^{2}]\tilde{\phi}_{\eta }^{2}-2(2-\eta )\tilde{s%
}\tilde{\phi}_{\eta }-2=0  \label{SI}
\end{equation}%
and thus, for $s=\tilde{s}$ and for any $C^{\infty }$-function $\eta
:M\rightarrow \lbrack 0,1]$, the equation \eqref{SS3.3} is reduced to 
\begin{equation}
\begin{array}{c}
2\left( 1-\eta \right) ^{2}||W||_{h}^{2}(1-||W||_{h}^{2})\tilde{\phi}_{\eta
}^{2}+[2-3\eta -2\left( 2-\eta \right) \left( 1-\eta \right) ||W||_{h}^{2}]%
\tilde{s}\tilde{\phi}_{\eta }+1-2\left( 1-\eta \right) ||W||_{h}^{2}=0.%
\end{array}
\label{SII}
\end{equation}%
Since $||W||_{h}^{2}<\tilde{b}_{0}$, with $\tilde{b}_{0}$ given by %
\eqref{Strong_C}, one sees that $1-||W||_{h}^{2}\neq 0$ for any $%
C^{\infty }$-function $\eta :M\rightarrow \lbrack 0,1].$ However, there may exist $\eta (x)\in \lbrack 0,\frac{1}{2})$ for some $x\in M$ such that $1-2(1-\eta
)||W||_{h}^{2}=0.$ Thus, two cases must be analyzed separately:

\noindent a) if $1-2(1-\eta )||W||_{h}^{2}\neq 0,$ for any $C^{\infty }$%
-function $\eta :M\rightarrow \lbrack 0,1],$ then by ~\eqref{SI} and ~%
\eqref{SII}, we get $[1-4\eta \left( 1-\eta \right) ||W||_{h}^{2}]\tilde{\phi}%
_{\eta }-4\eta \tilde{s}=0,$ which provides a contradiction if $\eta (x)=0.$
Thus, $\eta (x)\neq 0$ for any $x\in M$ and making use of $\tilde{s}\neq 0$
and $\tilde{\phi}_{\eta }(||W||_{h}^{2},\tilde{s},\eta )>0,$ it turns out
that $1-4\eta \left( 1-\eta \right) ||W||_{h}^{2}\neq 0$ and   
\begin{equation}
\tilde{\phi}_{\eta }(||W||_{h}^{2},\tilde{s},\eta )=\frac{4\eta \tilde{s}}{%
1-4\eta \left( 1-\eta \right) ||W||_{h}^{2}}.  \label{SIII}
\end{equation}%
Plugging \eqref{SIII} into \eqref{SI}, one has that $\tilde{s}^{2}=\frac{%
[1-4\eta \left( 1-\eta \right) ||W||_{h}^{2}]^{2}}{4\eta \left[ 3\eta
-2+4\eta \left( 1-\eta \right) ||W||_{h}^{2}\right] }$ which contradicts $%
\tilde{s}^{2}\in (0,b^{2}]$ due to the condition $||W||_{h}<\tilde{b}_{0}$,
where $\tilde{b}_{0}$ is given by \eqref{Strong_C}.

\noindent b) if $1-2(1-\eta )||W||_{h}^{2}=0$, with $\eta (x)\in \lbrack 0,%
\frac{1}{2})$ for some $x\in M$, then \eqref{SI} leads to 
\begin{equation}
\tilde{\phi}_{\eta }(||W||_{h}^{2},\tilde{s},\eta )=-\frac{1}{(2-\eta )%
\tilde{s}},  \label{SIV}
\end{equation}%
which together with \eqref{SII} yields $-\eta \tilde{s}^{2}=\frac{1-2\eta }{%
4(2-\eta )}.$ Obviously, the last relation provides a contradiction (i.e.,  $%
\tilde{s}^{2}<0$) for $\eta (x)\in \lbrack 0,\frac{1}{2}).$

We conclude from above that $B(||W||_{h}^{2},\tilde{s},\eta )\neq 0,$ \ for
any $s\in \lbrack -b,b],$ $b=||W||_{h}<\tilde{b}_{0}$, where $\tilde{b}_{0}$
is given by \eqref{Strong_C}. 
\end{proof}

Therefore, by \cref{Lema44}, one has established that $\tilde{\phi}_{\eta }-s%
\tilde{\phi}_{\eta 2}\neq 0$ is also valid  when $n=2,$ for any $\ C^{\infty }$%
-function $\eta :M\rightarrow \lbrack 0,1]$ and $|s|\leq ||W||_{h}<\tilde{b}%
_{0},$ with $\tilde{b}_{0}$ given by \eqref{Strong_C}. Consequently, for any 
$n>1,$ $\tilde{\phi}_{\eta }-s\tilde{\phi}_{\eta 2}=\frac{B}{C}\neq 0$ and $%
\tilde{\phi}_{\eta 2}=\frac{A}{C}\tilde{\phi}_{\eta }$.

\begin{lemma}
\label{Lema5} Let $M$ be an $n$-dimensional $C^{\infty }$-manifold, $n>1,$
with the superwind metric $\tilde{F}_{\eta }(x,y)=\alpha \tilde{\phi}_{\eta
}(||W||_{h}^{2},s,\eta ).$ For any $C^{\infty }$-function $\eta
:M\rightarrow \lbrack 0,1],$ the first order derivatives of the function $%
\tilde{\phi}_{\eta }$ with respect to $||W||_{h}^{2}$ and $\eta $ (i.e., $%
\tilde{\phi}_{\eta 1}$ and $\tilde{\phi}_{\eta 3}$ respectively) and the
second order derivatives $\tilde{\phi}_{\eta 12}$, $\tilde{\phi}_{\eta 22}$
and $\tilde{\phi}_{\eta 32}$ hold the following relations:%
\begin{equation}
\begin{array}{l}
\tilde{\phi}_{\eta 1}=\frac{1-\eta }{2C}(B+\eta \tilde{\phi}_{\eta }^{2})%
\tilde{\phi}_{\eta }^{2}, \\ 
~ \\ 
\tilde{\phi}_{\eta 3}=\frac{1}{2C}\{s(\tilde{\phi}_{\eta }^{2}-B)+[(1-2\eta )%
\tilde{\phi}_{\eta }^{2}-B]||W||_{h}^{2}\tilde{\phi}_{\eta }\}\tilde{\phi}%
_{\eta }, \\ 
\\ 
\tilde{\phi}_{\eta 12}=\frac{1-\eta }{2C^{3}}\{A(B+C\tilde{\phi}_{\eta
})(B+\eta \tilde{\phi}_{\eta }^{2})+\eta ^{2}[2+(1-\eta )s\tilde{\phi}_{\eta
}]\tilde{\phi}_{\eta }^{4}\}\tilde{\phi}_{\eta }, \\ 
~ \\ 
\tilde{\phi}_{\eta 22}=\frac{1}{C^{3}}(A^{2}B+\eta ^{2}\tilde{\phi}_{\eta
}^{4}), \\ 
\\ 
\tilde{\phi}_{\eta 32}=\frac{1}{2C^{3}}\{s(\tilde{\phi}_{\eta
}^{2}-B)+[(1-2\eta )\tilde{\phi}_{\eta }^{2}-B]||W||_{h}^{2}\tilde{\phi}%
_{\eta }\}\{A(B+C\tilde{\phi}_{\eta })+\eta \lbrack 2+(2-\eta )s\tilde{\phi}%
_{\eta }]\tilde{\phi}_{\eta }^{2}\} \\~\\ 
\text{ \ \ \ \ \ \ \ \ }+\frac{1}{2C^{2}}[B(\tilde{\phi}_{\eta
}^{2}-B)+2\eta (s+||W||_{h}^{2}\tilde{\phi}_{\eta })\tilde{\phi}_{\eta
}^{3}].%
\end{array}
\label{RRR}
\end{equation}
\end{lemma}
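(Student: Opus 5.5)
The plan is implicit differentiation of the identity \eqref{SS3.3}, $\Phi(\|W\|_h^2,s,\eta,\tilde{\phi}_\eta)=0$, where $\Phi$ denotes its left-hand side, viewed as a polynomial in the four variables. By \cref{Lema44} i) we have $C\neq 0$. Note that $C=\frac{1}{2}\partial\Phi/\partial\tilde{\phi}_\eta$, since differentiating \eqref{SS3.3} with respect to $\tilde{\phi}$ gives exactly $2C$. Hence the implicit function theorem applies, and for each variable $\xi\in\{\|W\|_h^2,s,\eta\}$ we get
\[
\tilde{\phi}_{\eta\xi}=-\frac{\partial_\xi\Phi}{2C}.
\]
This reproduces the first relation in \eqref{SS3.1} for $\xi=s$, so the same mechanism covers the first-order derivatives $\tilde{\phi}_{\eta1}$ and $\tilde{\phi}_{\eta3}$.

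First, for $\tilde{\phi}_{\eta1}$ I compute $\partial\Phi/\partial(\|W\|_h^2)$, which equals
\[
(1-\eta)^2(1-2\|W\|_h^2)\tilde{\phi}^4-2(1-\eta)(2-\eta)s\tilde{\phi}^3-2(1-\eta)\tilde{\phi}^2.
\]
I then factor out $-(1-\eta)\tilde{\phi}^2$ and check that the remaining bracket equals $B+\eta\tilde{\phi}^2$ after eliminating $\|W\|_h^2\tilde{\phi}^2$ terms with \eqref{SS3.3}. The paper's formula is written in terms of $B$, so the reduction modulo \eqref{SS3.3} (equivalently, the identities in \eqref{SS3.1}) is the tool throughout.

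Second, for $\tilde{\phi}_{\eta3}$ I compute $\partial\Phi/\partial\eta$, which involves $-2(1-\eta)\|W\|_h^2(1-\|W\|_h^2)\tilde{\phi}^4$, $-2[1-(3-2\eta)\|W\|_h^2]s\tilde{\phi}^3$, $[2\|W\|_h^2+2(2-\eta)s^2]\tilde{\phi}^2$ and $2s\tilde{\phi}$. I then group its terms as $s(\tilde{\phi}^2-B)+[(1-2\eta)\tilde{\phi}^2-B]\|W\|_h^2\tilde{\phi}$, times $-\tilde{\phi}$, substituting $B$ from \eqref{SS3.2}.

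For the second-order derivatives I differentiate the first-order formulas with respect to $s$ rather than differentiating \eqref{SS3.3} twice. I write $\tilde{\phi}_{\eta22}=\partial_s(A\tilde{\phi}/C)$, using the derivatives
\[
A_s=(2-\eta)^2\tilde{\phi}+\bigl[-2(1-\eta)(1-(2-\eta)\|W\|_h^2)\tilde{\phi}+(2-\eta)^2s\bigr]\tilde{\phi}_{\eta2},
\]
and similarly $B_s$ and $C_s$, each expressed through $\tilde{\phi}_{\eta2}=A\tilde{\phi}/C$. Everything then becomes a rational function in $\tilde{\phi},s,\|W\|_h^2,\eta$ with denominator $C^3$. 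I simplify the numerator to $A^2B+\eta^2\tilde{\phi}^4$ using the relations $C\tilde{\phi}=B+sA\tilde{\phi}$ and $(2-\eta)B-2A=-\eta\tilde{\phi}^2$ from \eqref{SS3.1}. The same route gives $\tilde{\phi}_{\eta12}=\partial_s\tilde{\phi}_{\eta1}$ and $\tilde{\phi}_{\eta32}=\partial_s\tilde{\phi}_{\eta3}$ via the quotient rule, with the product $\{\cdots\}\{\cdots\}/C^3$ coming from differentiating $1/C$ and the $1/C^2$ term from differentiating the numerator.

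The main obstacle is the algebraic simplification: the raw derivatives contain many monomials in $\tilde{\phi}$ up to high degree, and the target forms are valid only modulo \eqref{SS3.3}. I would first compute $C_s$ in closed form and reduce $C\,C_s$-type combinations using \eqref{SS3.1} to express $AC_s-CA_s$ compactly. This quantity is what appears in $\tilde{\phi}_{\eta22}$ and underlies the common factor $A(B+C\tilde{\phi})+\eta[\ldots]\tilde{\phi}^2$ in $\tilde{\phi}_{\eta12}$ and $\tilde{\phi}_{\eta32}$. If hand reduction stalls, I would verify each identity by checking that numerator minus target lies in the ideal generated by the quartic \eqref{SS3.3}, that is, by polynomial division in $\tilde{\phi}$.
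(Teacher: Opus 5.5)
Your proposal is correct and is essentially the paper's proof. The paper likewise uses implicit differentiation of \eqref{SS3.3} (with $\partial\Phi/\partial\tilde{\phi}_\eta=2C\neq 0$) for $\tilde{\phi}_{\eta1},\tilde{\phi}_{\eta3}$. It then applies the quotient rule in $s$, with $A_2,B_2,C_2$ rewritten through $\tilde{\phi}_{\eta2}=A\tilde{\phi}_\eta/C$ and simplified by $C\tilde{\phi}_\eta=B+sA\tilde{\phi}_\eta$ and $(2-\eta)B-2A=-\eta\tilde{\phi}_\eta^2$.

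One minor correction concerns the first order. There $\partial\Phi/\partial\|W\|_h^2=-(1-\eta)\tilde{\phi}_\eta^2(B+\eta\tilde{\phi}_\eta^2)$ and $\partial\Phi/\partial\eta=-\tilde{\phi}_\eta\{s(\tilde{\phi}_\eta^2-B)+[(1-2\eta)\tilde{\phi}_\eta^2-B]\|W\|_h^2\tilde{\phi}_\eta\}$ hold as exact polynomial identities, so no reduction modulo \eqref{SS3.3} is needed at that stage. The relation \eqref{SS3.3} enters only at second order, via $C\tilde{\phi}_\eta=B+sA\tilde{\phi}_\eta$, which is $2\Phi=0$ in disguise.
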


\begin{proof}
By differentiating \eqref{SS3.3}  with respect to $||W||_{h}^{2}$ and $\eta ,
$ the first two expressions in \eqref{RRR} follow.
A direct computation leads to the derivatives of \eqref{SS3.2} with respect
to $s$, which read 
\begin{eqnarray*}
A_{2} &=&\frac{1}{C}[2A^{2}-\eta (2-\eta )\tilde{\phi}_{\eta }^{2}],\qquad
B_{2}=\frac{2}{C}(AB-\eta \tilde{\phi}_{\eta }^{2}), \\
C_{2} &=&-\frac{1}{C\tilde{\phi}_{\eta }}\{AB+\eta \lbrack 2+(2-\eta )s%
\tilde{\phi}_{\eta }]\tilde{\phi}_{\eta }^{2}\}+3A,
\end{eqnarray*}%
where $A_{2}=\frac{\partial A}{\partial s},$ $B_{2}=\frac{\partial B}{%
\partial s},$ $C_{2}=\frac{\partial B}{\partial s}.$ These, along with%
\begin{eqnarray*}
\tilde{\phi}_{\eta 12} &=&\frac{1-\eta }{2C^{2}}\left[ B_{2}C+2A(B+2\eta 
\tilde{\phi}_{\eta }^{2})-(B+\eta \tilde{\phi}_{\eta }^{2})C_{2}\right] 
\tilde{\phi}_{\eta }^{2}, \\
\tilde{\phi}_{\eta 22} &=&\frac{1}{C^{2}}(A_{2}C+A^{2}-AC_{2})\tilde{\phi}%
_{\eta }, \\
\tilde{\phi}_{\eta 32} &=&\frac{1}{2C^{2}}\{(\tilde{\phi}_{\eta
}^{2}-B)C+s[2A\tilde{\phi}_{\eta }^{2}-B_{2}C]+[3(1-2\eta )A\tilde{\phi}%
_{\eta }^{2}-AB-B_{2}C]||W||_{h}^{2}\tilde{\phi}_{\eta }\}\tilde{\phi}_{\eta
} \\
&&+\frac{1}{2C^{2}}\{s(\tilde{\phi}_{\eta }^{2}-B)+[(1-2\eta )\tilde{\phi}%
_{\eta }^{2}-B]||W||_{h}^{2}\tilde{\phi}_{\eta }\}(A-C_{2})\tilde{\phi}%
_{\eta }
\end{eqnarray*}%
imply the last three formulas in \eqref{RRR}. 
\end{proof}

The above preparatory part \noindent together with \cref{Prop2} helps us 
provide the spray coefficients corresponding to the superwind metric $\tilde{%
F}_{\eta }.$

\begin{lemma}
\label{Prop5} Let $M$ be an $n$-dimensional manifold, $n>1,$ with the
superwind metric $\tilde{F}_{\eta },$ for any $C^{\infty }$-function $\eta
:M\rightarrow \lbrack 0,1]$. Then the relationship between the spray
coefficients $\tilde{\mathcal{G}}_{\eta }^{i}$ of $\tilde{F}_{\eta }$ and
the spray coefficients $\mathcal{G}_{\alpha }^{i}=\frac{1}{4}h^{im}\left( 2%
\frac{\partial h_{jm}}{\partial x^{k}}-\frac{\partial h_{jk}}{\partial x^{m}}%
\right) y^{j}y^{k}$ of $\alpha $ is given by%
\begin{eqnarray}
\tilde{\mathcal{G}}_{\eta }^{i}(x,y) &=&\mathcal{G}_{\alpha
}^{i}(x,y)+\alpha Qs_{0}^{i}-\alpha ^{2}[R(r^{i}+s^{i})+\frac{1}{2}P\eta
^{i}]  \label{SPRAY_s} \\
&&+\{\mathit{\Theta }[-2\alpha Qs_{0}+r_{00}+\alpha ^{2}(2Rr+Pq)]+\alpha
\lbrack \mathit{\Omega }(r_{0}+s_{0})+\frac{1}{2}\mathit{\Lambda }\eta
_{0}]\}\frac{y^{i}}{\alpha }  \notag \\
&&-\{\mathit{\Psi }[-2\alpha Qs_{0}+r_{00}+\alpha ^{2}(2Rr+Pq)]+\alpha
\lbrack \mathit{\Pi }(r_{0}+s_{0})+\frac{1}{2}\mathit{\Upsilon }\eta
_{0}\}W^{i},  \notag
\end{eqnarray}%
$i=1,...,n$, where%
\begin{equation}
\begin{array}{l}
\eta _{k}=\frac{\partial \eta }{\partial x^{k}},\qquad \eta _{0}=\frac{%
\partial \eta }{\partial x^{k}}y^{k},\qquad \eta ^{i}=h^{ij}\eta _{j},\qquad
q=-W_{k}\eta ^{k},\text{\ } \\ 
\\ 
r_{00}=-W_{j|k}y^{j}y^{k},\qquad r_{0}=\frac{1}{2}%
(W_{j|k}+W_{k|j})y^{j}W^{k},\qquad r^{i}+s^{i}=h^{ij}W_{k|j}W^{k}, \\ 
\\ 
r=-W_{j|k}W^{j}W^{k},\qquad s_{0}=-\frac{1}{2}(W_{j|k}-W_{k|j})y^{j}W^{k},%
\qquad s_{0}^{i}=-\frac{1}{2}h^{ij}(W_{j|k}-W_{k|j})y^{k}, \\ 
\\ 
R=\frac{1-\eta }{2\alpha ^{4}B}[\alpha ^{2}B+\eta \tilde{F}_{\eta }^{2}]%
\tilde{F}_{\eta }^{2},\text{ \ }\text{ \ }P=\frac{1}{2\alpha ^{4}B}\{\beta (%
\tilde{F}_{\eta }^{2}-\alpha ^{2}B)+[(1-2\eta )\tilde{F}_{\eta }^{2}-\alpha
^{2}B]||W||_{h}^{2}\tilde{F}_{\eta }\}\tilde{F}_{\eta }, \\ 
\\ 
Q=\frac{1}{\alpha B}A\tilde{F}_{\eta },\text{ \ }\text{ \ }\mathit{\Theta }=%
\frac{\alpha }{2E\tilde{F}_{\eta }}(\alpha ^{6}AB^{2}-\eta ^{2}\beta \tilde{F%
}_{\eta }^{5}),\text{ \ \ }\mathit{\Psi }=\frac{\alpha ^{2}}{2E}(\alpha
^{4}A^{2}B+\eta ^{2}\tilde{F}_{\eta }^{4}), \\ 
~ \\ 
\mathit{\Omega }=\frac{1-\eta }{\alpha ^{2}BE}[(\alpha ^{2}B+\eta \tilde{F}%
_{\eta }^{2})(\alpha ^{6}B^{3}+\eta ^{2}||W||_{h}^{2}\tilde{F}_{\eta
}^{6})-\eta ^{2}\alpha ^{2}\tilde{F}_{\eta }^{5}(\beta B+||W||_{h}^{2}A%
\tilde{F}_{\eta })] \\ 
\text{ \ \ \ \ }~ \\ 
\mathit{\Pi }=\frac{1-\eta }{2\alpha ^{3}BE}\{(\alpha ^{2}B+\eta \tilde{F}%
_{\eta }^{2})(2\alpha ^{6}AB^{2}-\eta ^{2}\beta \tilde{F}_{\eta %
}^{5})+\eta ^{2}\alpha ^{2}B\tilde{F}_{\eta }^{4}[2\alpha ^{2}+\left( 1-\eta
\right) \beta \tilde{F}_{\eta }]\tilde{F}_{\eta }, \\ 
\\ 
\mathit{\Lambda }=\frac{1}{\alpha ^{2}BE\tilde{F}_{\eta }}\{\beta (\tilde{F}%
_{\eta }^{2}-\alpha ^{2}B)+[(1-2\eta )\tilde{F}_{\eta }^{2}-\alpha
^{2}B]||W||_{h}^{2}\tilde{F}_{\eta }\}(\alpha ^{6}B^{3}+\eta ^{2}\tilde{F}%
_{\eta }^{6}||W||_{h}^{2}) \\~\\
\qquad -\frac{1}{BE\tilde{F}_{\eta }}\{\frac{1}{2}\alpha ^{4}B^{2}(\tilde{F}%
_{\eta }^{2}-\alpha ^{2}B)+\eta \tilde{F}_{\eta }^{5}[\beta +(1-2\eta
)||W||_{h}^{2}\tilde{F}_{\eta }]\}(\beta B+||W||_{h}^{2}A\tilde{F}_{\eta }),
\\ 
\\ 
\mathit{\Upsilon }=\frac{1}{\alpha ^{3}BE}\{\beta (\tilde{F}_{\eta
}^{2}-\alpha ^{2}B)+[(1-2\eta )\tilde{F}_{\eta }^{2}-\alpha
^{2}B]||W||_{h}^{2}\tilde{F}_{\eta }\}[\alpha ^{6}AB^{2}+\eta \tilde{F}%
_{\eta }^{3}(\alpha ^{3}C-\eta \beta \tilde{F}_{\eta }^{2})] \\~\\
\qquad +\frac{\alpha ^{2}}{E}C[\frac{1}{2}\alpha ^{2}B(\tilde{F}_{\eta
}^{2}-\alpha ^{2}B)+\eta \tilde{F}_{\eta }^{3}(\beta +||W||_{h}^{2}\tilde{F}%
_{\eta })],%
\end{array}
\label{terms}
\end{equation}%
with 
\begin{equation}
\begin{array}{l}
A=-\frac{1}{\alpha ^{2}}\{\left( 1-\eta \right) \left[ 1-\left( 2-\eta
\right) ||W||_{h}^{2}\right] \tilde{F}_{\eta }^{2}-(2-\eta )^{2}\beta \tilde{%
F}_{\eta }-(2-\eta )\alpha ^{2}\}, \\ 
~ \\ 
B=-\frac{1}{\alpha ^{2}}\{[1-2(1-\eta )||W||_{h}^{2}]\tilde{F}_{\eta
}^{2}-2(2-\eta )\beta \tilde{F}_{\eta }-2\alpha ^{2}\}, \\ 
~ \\ 
C=\frac{1}{\alpha \tilde{F}_{\eta }}\left( \alpha ^{2}B+\beta A\tilde{F}%
_{\eta }\right) ,\text{ \ }E=\alpha ^{6}BC^{2}+(||W||_{h}^{2}\alpha
^{2}-\beta ^{2})(\alpha ^{4}A^{2}B+\eta ^{2}\tilde{F}_{\eta }^{4}).%
\end{array}
\label{ABC}
\end{equation}
\end{lemma}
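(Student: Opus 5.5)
The plan is to apply \cref{Prop2} to the superwind metric written as $\tilde F_\eta=\alpha\tilde\phi_\eta(b^2,s,\eta)$ with $b^2=||W||_h^2$. This is legitimate by \cref{PropXX}. We then substitute the closed forms of $\tilde\phi_\eta$ and its derivatives from \cref{Lema4,Lema5}, and rewrite everything in terms of $\alpha,\beta,\tilde F_\eta$.

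\textbf{Step 1: translate the data.} We use $a_{ij}=h_{ij}$ and $b_i=-W_i$, so $b^i=-W^i$. This converts the objects in \eqref{rs} into those in \eqref{terms}:
\begin{itemize}
\item $r_{00}=-W_{j|k}y^jy^k$ and $r^i+s^i=h^{ij}W_{k|j}W^k$;
\item $q=-W_k\eta^k$;
\item $s_0^i=-\tfrac12 h^{ij}(W_{j|k}-W_{k|j})y^k$;
\item the $b^i$-term of \cref{Prop2} turns into the $-W^i$ term of \eqref{SPRAY_s}.
\end{itemize}
The signs in $r_0$ and $s_0$ need care, since each contains two factors of $b$. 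The orientation used in \eqref{terms} should be checked against $r_i=b^jr_{ji}$ directly.

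\textbf{Step 2: homogenize the auxiliary quantities.} Set $\tilde\phi_\eta=\tilde F_\eta/\alpha$ and $s=\beta/\alpha$. Multiplying through shows that the quantities $A,B,C$ of \eqref{SS3.2} coincide with the $A,B,C$ of \eqref{ABC}, up to the stated normalizations. For instance, $B$ in \eqref{SS3.2} equals $B$ in \eqref{ABC} once $\tilde F_\eta^2/\alpha^2$ is factored out. Also, $C\tilde\phi=B+sA\tilde\phi$ from \eqref{SS3.1} becomes $C=(\alpha^2B+\beta A\tilde F_\eta)/(\alpha\tilde F_\eta)$.

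\textbf{Step 3: the simple coefficients.} From \cref{Lema44,Lema4} we have $\tilde\phi-s\tilde\phi_2=B/C\neq0$ and $\tilde\phi_2=A\tilde\phi/C$. Hence
\[
Q=\frac{\tilde\phi_2}{\tilde\phi-s\tilde\phi_2}=\frac{A\tilde\phi}{B}, \qquad R=\frac{C\tilde\phi_1}{B}, \qquad P=\frac{C\tilde\phi_3}{B},
\]
with $\tilde\phi_1$ and $\tilde\phi_3$ taken from \eqref{RRR}. Clearing $\alpha$-powers then gives the stated $R,P,Q$.

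\textbf{Step 4: the denominator.} Compute
\[
\tilde\phi-s\tilde\phi_2+(b^2-s^2)\tilde\phi_{22}=\frac{B}{C}+(b^2-s^2)\frac{A^2B+\eta^2\tilde\phi^4}{C^3}=\frac{BC^2+(b^2-s^2)(A^2B+\eta^2\tilde\phi^4)}{C^3}.
\]
Homogenizing the numerator produces $E$. Its positivity (or at least non-vanishing) comes from \cref{Prop1} together with the strong convexity established in \cref{Thm1}.

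\textbf{Step 5: the remaining coefficients.} Insert these into the formulas for $\Theta,\Psi,\Pi,\Omega,\Upsilon,\Lambda$. For example, $\Psi=\tilde\phi_{22}/(2\cdot\text{denominator})$ immediately gives $\frac{A^2B+\eta^2\tilde\phi^4}{2(BC^2+\cdots)}$, which is the stated $\Psi$ after scaling.

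\textbf{Main obstacle.} I expect the hard part to be $\Pi,\Upsilon$ and hence $\Omega,\Lambda$, which involve $\tilde\phi_{12}$ and $\tilde\phi_{32}$. Here the combination $(\tilde\phi-s\tilde\phi_2)\tilde\phi_{12}-s\tilde\phi_1\tilde\phi_{22}$ has to be simplified. First I would use $C\tilde\phi=B+sA\tilde\phi$ and $(2-\eta)B-2A=-\eta\tilde\phi^2$ to eliminate $C$ in favour of $B$, and collect terms over $C^3$. The common factor $B+C\tilde\phi$ and the explicit $\eta^2$-terms should then cancel against $s\tilde\phi_1\tilde\phi_{22}$. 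If this stalls, I would verify the identities by checking them symbolically against the quartic \eqref{SS3.3} as polynomial identities modulo that relation.
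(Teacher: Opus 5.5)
Your proposal is correct and is essentially the paper's own proof. The paper likewise applies \cref{Prop2} to $\tilde F_\eta=\alpha\tilde\phi_\eta(||W||_h^2,s,\eta)$, uses \cref{Lema4,Lema44,Lema5} to get $Q$, $R$, $P$ and the five combinations recorded in \eqref{RRRR} (your denominator, $\Psi$ and $\Theta$ computations reproduce these after homogenizing by $\alpha^6$ into $E$), and translates via $a_{ij}=h_{ij}$, $b_i=-W_i$, $b^i=-W^i$, which indeed gives the stated signs for $r_0$, $s_0$, $s_0^i$, $r^i+s^i$, $q$ and the $-W^i$ term; the only slip is cosmetic, since $A$, $B$, $C$ in \eqref{SS3.2} and \eqref{ABC} coincide exactly under $\tilde F_\eta=\alpha\tilde\phi_\eta$, $\beta=\alpha s$, so nothing needs to be factored out.
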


\begin{proof}
By using \cref{Prop2} and further simple computations, the following
expressions are obtained 
\begin{equation}
\begin{array}{l}
s\tilde{\phi}_{\eta }+(b^{2}-s^{2})\tilde{\phi}_{\eta 2}=\frac{1}{C}%
(sB+||W||_{h}^{2}A\tilde{\phi}_{\eta }), \\ 
~ \\ 
(\tilde{\phi}_{\eta }-s\tilde{\phi}_{\eta 2})\tilde{\phi}_{\eta 2}-s\tilde{%
\phi}_{\eta }\tilde{\phi}_{\eta 22}=\frac{1}{C^{3}}(AB^{2}-\eta ^{2}s\tilde{%
\phi}_{\eta }^{5}), \\ 
~ \\ 
\tilde{\phi}_{\eta }-s\tilde{\phi}_{\eta 2}+(b^{2}-s^{2})\tilde{\phi}_{\eta
22}=\frac{1}{C^{3}}[BC^{2}+(||W||_{h}^{2}-s^{2})(A^{2}B+\eta ^{2}\tilde{\phi}%
_{\eta }^{4})], \\ 
~ \\ 
(\tilde{\phi}_{\eta }-s\tilde{\phi}_{\eta 2})\tilde{\phi}_{\eta 12}-s\tilde{%
\phi}_{\eta 1}\tilde{\phi}_{\eta 22}=\frac{1-\eta }{2C^{4}}\{(B+\eta \tilde{%
\phi}_{\eta }^{2})(2AB^{2}-\eta ^{2}s\tilde{\phi}_{\eta }^{5})+\eta
^{2}[2+(1-\eta )s\tilde{\phi}_{\eta }]B\tilde{\phi}_{\eta }^{4}\}\tilde{\phi}%
_{\eta }. \\ 
\\ 
(\tilde{\phi}_{\eta }-s\tilde{\phi}_{\eta 2})\tilde{\phi}_{\eta 32}-s\tilde{%
\phi}_{\eta 3}\tilde{\phi}_{\eta 22}=\text{\ }\frac{B}{2C^{3}}[B(\tilde{\phi}%
_{\eta }^{2}-B)+2\eta (s+||W||_{h}^{2}\tilde{\phi}_{\eta })\tilde{\phi}%
_{\eta }^{3}] \\~\\ 
\qquad \qquad \qquad \qquad \qquad \qquad+\frac{1}{C^{4}}\{s(\tilde{\phi}_{\eta }^{2}-B)+[(1-2\eta )\tilde{\phi}%
_{\eta }^{2}-B]||W||_{h}^{2}\tilde{\phi}_{\eta }\}[AB^{2}+\eta (C-\eta s 
\tilde{\phi}_{\eta }^{2})\tilde{\phi}_{\eta }^{3}]\text{\ }%
\end{array}
\label{RRRR}
\end{equation}

In view of \eqref{NOT} and \eqref{rs}, we get that 
\begin{equation}
\begin{array}{l}
a_{ij}=h_{ij},\;\;b_{i}=-W_{i},\;\;b^{i}=h^{ji}b_{j}=-W^{i}, \\~\\ 
r_{ij}=-\frac{1}{2}(W_{i|j}+W_{j|i}),\quad s_{ij}=-\frac{1}{2}%
(W_{i|j}-W_{j|i}),\quad \\~\\ 
\eta _{0}=\eta _{i}y^{i},\text{ \ \ }\eta ^{i}=h^{ij}\eta _{j},\text{ \ \ \ }%
q=-W^{i}\eta _{i},%
\end{array}
\label{RR}
\end{equation}%
where $\eta _{i}=\frac{\partial \eta }{\partial x^{i}},$ $W_{i|j}=\frac{%
\partial W_{i}}{\partial x^{j}}-\Gamma _{ij}^{k}W_{k},$ and $\Gamma
_{ij}^{k}=\frac{1}{2}h^{km}\left( \frac{\partial h_{jm}}{\partial x^{i}}+%
\frac{\partial h_{im}}{\partial x^{j}}-\frac{\partial h_{ij}}{\partial x^{m}}%
\right) $. Collecting the findings \eqref{RRRR} and \eqref{RR} together %
\eqref{rs}, one can apply \cref{Prop2} and thus, our claim follows at once. 
\end{proof}
Finally, on account of the system \eqref{geo1} and \cref{Prop5} with $%
\tilde{F}_{\eta }(\gamma (t),\dot{\gamma}(t))=1$, we can write the ODE
system \eqref{GGG} which yields the time-minimal paths $\gamma (t)=(\gamma
^{i}(t)),$ $i=1,...,n$ on $(M,h)$ under the action of the superwind $%
\mathcal{W}_{\eta }$, for any $C^{\infty }$-function $\eta :M\rightarrow
\lbrack 0,1].$ This concludes the proof of \cref{Thm2}.

\newpage

\section{Examples including juxtapositions with the preceding \linebreak 
 studies
}

\label{Sec_4}

By several comprehensive examples in dimension 2 and discussion we present the comparisons of the contributions coming from the above developed theory with the preceding findings, which were obtained in the presence of wind in the Zermelo sense and gravity, i.e., the slippery slope model,  including in particular the original Matsumoto scenario; cf. \cite[Sec. 5]{slippery}, \cite[Sec. 6.1]{general}. 
In the below subsections, we analyze the individual impacts of perturbations in the sense of the newly introduced notions of superwind and varying (space-dependent) traction, as well as their combined effects on behaviour of the time-minimizing geodesics and evolution of the related time fronts.

We first present a brief overview of the problem $\mathcal{P}_{\eta }$ on a
generalized \textquotedblleft slope\textquotedblright\ \
referred as a surface $(M,h)$ embedded in $\mathbb{R}^{3}$ under the
action of a superwind $\mathcal{W}_{{\eta }}=\eta \mathcal{W}_{MAT}+(1-\eta
)W$ with a $C^{\infty }$-function $\eta :M\rightarrow \lbrack 0,1].$ Since $%
(M,h)$ is a $2$-dimensional Riemannian manifold, throughout this section $%
(x^{1},x^{2})$ denotes a local coordinate system on a local chart at an
arbitrary point $O\in M$.
If we parametrize $M$ by $(x^{1},x^{2})\in M\longmapsto (x,y,z)\in \mathbb{R}%
^{3},$ where $x=x^{1},$ $y=x^{2},$ $z=f(x^{1},x^{2})$ and $f$ is a $%
C^{\infty }$-function on $M$, then the Riemannian metric $h$ induced on $M$
is defined by $\left( h_{ij}(x^{1},x^{2})\right) =\left( 
\begin{array}{cc}
1+f_{x^{1}}^{2} & f_{x^{1}}f_{x^{2}} \\ 
f_{x^{1}}f_{x^{2}} & 1+f_{x^{2}}^{2}%
\end{array}%
\right) $, $i,j=1,2,$ where the notation $f_{x^{i}}$ means the partial
derivative of $f$ with respect to $x^{i},$ $i=1,2.$

Let us consider the plane $\pi _{O}$ tangent to $M$ at the point $%
O=(x^{1},x^{2})\in M.$ Since it is spanned by the vectors $\frac{\partial }{%
\partial x^{1}}=(1,0,f_{x^{1}})$ and $\frac{\partial }{\partial x^{2}}%
=(0,1,f_{x^{2}})$, the common wind $W$ can be locally written as 
\begin{equation}
W=W^{1}\frac{\partial }{\partial x^{1}}+W^{2}\frac{\partial }{\partial x^{2}}%
,  \label{wind}
\end{equation}%
with $||W||_{h}=h_{ij}(x^{1},x^{2})W^{i}W^{j}.$ Aiming to describe $\mathcal{P}_{\eta }$ in the current background, we construct an
orthonormal basis $\{e_{1},e_{2}\}$ in the tangent plane $\pi _{O}$, where $%
e_{1}$ has the same direction as $W,$ explicitly one has that $e_{1}=\frac{1}{||W||_{h}}%
W.$ Since $e_{2}$ is orthogonal to $e_{1},$ it can be determined requiring
that $h(e_{1},e_{2})=0$ and $||e_{2}||_{h}=1,$ with $e_{2}=W_{\bot }^{1}%
\frac{\partial }{\partial x^{1}}+W_{\bot }^{2}\frac{\partial }{\partial x^{2}%
}.$ By some algebraic manipulation, we get that%
\begin{equation*}
W_{\bot }^{1}=\mp \frac{W^{k}h_{k2}}{||W||_{h}\det h}\text{ and }W_{\bot
}^{2}=\pm \frac{W^{k}h_{k1}}{||W||_{h}\det h},
\end{equation*}%
where $\det h$ denotes $\det \left( h_{ij}(x^{1},x^{2})\right) $. So, we can 
choose $e_{2}=\frac{1}{||W||_{h}\det h}(-W^{k}h_{k2}\frac{\partial }{%
\partial x^{1}}+W^{k}h_{k1}\frac{\partial }{\partial x^{2}}),$ (with the
Einstein convention that $W^{k}h_{k2}=W^{1}h_{12}+W^{2}h_{22}$ and $%
W^{k}h_{k1}=W^{1}h_{11}+W^{2}h_{21}$) and then, by using the notation $%
\mathbf{y}$ for an arbitrary tangent vector in $\pi _{O},$ it can be written
as%
\begin{equation*}
\mathbf{y}=y^{1}\frac{\partial }{\partial x^{1}}+y^{2}\frac{\partial }{%
\partial x^{2}}=\tilde{y}^{1}e_{1}+\tilde{y}^{2}e_{2},
\end{equation*}%
with respect to the bases $\{\frac{\partial }{\partial x^{1}},\frac{\partial 
}{\partial x^{2}}\}$ and $\{e_{1},e_{2}\}$ in $\pi _{O}$, respectively.
Next, by combining these with the explicit expressions for $e_{1}$ and $e_{2}$%
, we get the following connection between the coordinates $(\tilde{y}^{1},%
\tilde{y}^{2})$ and $(y^{1},y^{2})$
\begin{equation}
\tilde{y}^{1}=\frac{1}{||W||_{h}}(W^{k}h_{k1}y^{1}+W^{k}h_{k2}y^{2}),\text{
\ \ \ \ }\tilde{y}^{2}=\frac{\det h}{||W||_{h}}(-W^{2}y^{1}+W^{1}y^{2}).
\label{EE1}
\end{equation}%
Moreover, for any $C^{\infty }$-function $\eta :M\rightarrow \lbrack 0,1],$
the equations of motion under the action of a superwind $\mathcal{W}_{{%
\eta }}$ ($\mathbf{y}=u+\mathcal{W}_{{\eta }}$) in the coordinates $(\tilde{y%
}^{1},\tilde{y}^{2})$, which correspond to $\{e_{1},e_{2}\}$, read as
follows 
\begin{equation}
\left\{ 
\begin{array}{rll}
\tilde{y}^{1} & = & (1+\eta ||W||_{h}\cos \omega )\cos \omega +(1-\eta
)||W||_{h} \\ 
~ &  &  \\ 
\tilde{y}^{2} & = & (1+\eta ||W||_{h}\cos \omega )\sin \omega%
\end{array}%
,\right.  \label{2_indicatrix}
\end{equation}%
for any $\omega \in \lbrack 0,2\pi )$ which is referred as the 
angle between the self-velocity $u$ and the wind $W$ (measured clockwise from the latter), with $||u||_{h}=1.$ By eliminating the parameter $\omega $ from \eqref{2_indicatrix}, we obtain an
implicit equation of motion on 
  $(M,h)$, namely 
\begin{equation}
\sqrt{\lbrack \tilde{y}^{1}-(1-\eta )||W||_{h}]^{2}+(\tilde{y}^{2})^{2}}=(%
\tilde{y}^{1})^{2}+(\tilde{y}^{2})^{2}-(2-\eta )\tilde{y}^{1}||W||_{h}+(1-%
\eta )||W||_{h}^{2}.  \label{EE}
\end{equation}%
On account of \eqref{NOT} and \eqref{EE1}, let us observe that 
\begin{equation}
(\tilde{y}^{1})^{2}+(\tilde{y}^{2})^{2}=\alpha ^{2}\text{ \ \ and }\tilde{y}%
^{1}=-\frac{1}{||W||_{h}}\beta ,  \label{AB}
\end{equation}%
and thus, \eqref{EE} can be equivalently written in the form 
\begin{equation}
\sqrt{\alpha ^{2}+2(1-\eta )\beta +(1-\eta )^{2}||W||_{h}^{2}}=\alpha
^{2}+(2-\eta )\beta +(1-\eta )||W||_{h}^{2}.  \label{III}
\end{equation}%
Finally, by applying Okubo's method, \eqref{III} leads to %
\eqref{MAMA_general} which implicitly provides the superwind metric $\tilde{F%
}_{\eta }$. We note that the equation \eqref{III} or the system %
\eqref{2_indicatrix} give the indicatrix of $\tilde{F}_{\eta }$.

Now, without loss of generality we can assume that the slope is modeled by the
inclined plane $z=\frac{1}{2}x$ of the slope
angle $26.6^{\circ }$, i.e., $\ f(x^{1},x^{2})=\frac{1}{2}x$,
where $x=x^{1},$ $y=x^{2}$, which will be denoted by $\mathfrak{S}$. It follows that $%
\left( h_{ij}(x^{1},x^{2})\right) =\left( 
\begin{array}{cc}
5/4 & 0 \\ 
0 & 1%
\end{array}%
\right) ,$ $i,j=1,2$, and thus,%
\begin{equation}
\alpha ^{2}=h_{ij}y^{i}y^{j}=\frac{5}{4}(y^{1})^{2}+(y^{2})^{2}\text{, \ \ }%
\mathcal{G}_{\alpha }^{i}=0\text{ \ \ and \ \ \ }\beta =-\frac{5}{4}%
W^{1}y^{1}-W^{2}y^{2}.  \label{ALL}
\end{equation}%
Moreover, in this case $||W||_{h}^{2}=\frac{5}{4}(W^{1})^{2}+(W^{2})^{2}$
and by applying the general theory presented in the previous sections, the
wind force shoud be restricted to $||W||_{h}<\tilde{b}_{0},$ with $%
\tilde{b}_{0}$ defined in \eqref{Strong_C}. This refers to the strong
convexity condition which corresponds to the superwind metric $\tilde{F}%
_{\eta }$ with \eqref{ALL}, for any $C^{\infty }$-function $\eta
:M\rightarrow \lbrack 0,1].$ In view of \eqref{EE1}, it follows that $\tilde{%
y}^{1}=\frac{1}{||W||_{h}} \left( \frac{5}{4} W^{1}y^{1}+W^{2}y^{2}\right) $ and $\tilde{y}^{2}=\frac{5}{4||W||_{h}}%
\left( -W^{2}y^{1}+W^{1}y^{2}\right) $ and by using \eqref{2_indicatrix}, we
deduce the following parametric equations for the indicatrix of $\tilde{F}%
_{\eta }$ 
\begin{equation}
\left\{ 
\begin{array}{rll}
y^{1} & = & -\frac{2}{\sqrt{5}}[1-\eta (\frac{\sqrt{5}}{2}W^{1}\cos \theta
+W^{2}\sin \theta )]\cos \theta +(1-\eta )W^{1} \\ 
~ &  &  \\ 
y^{2} & = & -[1-\eta (\frac{\sqrt{5}}{2}W^{1}\cos \theta +W^{2}\sin \theta
)]\sin \theta +(1-\eta )W^{2}%
\end{array}%
,\right.  \label{INCL_IND}
\end{equation}%
for any clockwise direction $\theta \in \lbrack 0,2\pi )$ of the velocity $u$
with respect to $\frac{\partial }{\partial x^{1}}.$ In particular, if the
wind $W$ is the gravitational wind on $\mathfrak{S}$, i.e., 
$W=\mathbf{G}^{T}=-\frac{2\bar{g}}{5}\frac{\partial }{\partial x^{1}}$, with $%
||\mathbf{G}^{T}||_{h}=\frac{\bar{g}}{\sqrt{5}},$ \eqref{2_indicatrix} are
reduced to \cite[(72), with $\eta=0$]{general}, where $\bar{g}$ denotes the rescaled magnitude of the acceleration of gravity. 

Now, we are in position to write the $\tilde{F}_{\eta}$-geodesic equations which correspond to $\mathfrak{S}$. 
In view of \cref{Thm2} the time-minimal paths  $\gamma (t)=(x^{1}(t),x^{2}(t))$ on $\mathfrak{S}$ are provided by the
solutions of the ODE system%
\begin{eqnarray}
0 &=&\ddot{x}^{i}+2\alpha \tilde{Q}s_{0}^{i}-2\alpha ^{2}[\tilde{R}%
(r^{i}+s^{i})+\frac{1}{2}\tilde{P}\eta ^{i}]  \label{GGG_inclined} \\
&&+2\{\mathit{\tilde{\Theta}}[-2\alpha \tilde{Q}s_{0}+r_{00}+\alpha ^{2}(2%
\tilde{R}r+\tilde{P}q)]+\alpha \lbrack \mathit{\tilde{\Omega}}(r_{0}+s_{0})+%
\frac{1}{2}\mathit{\tilde{\Lambda}}\eta _{0}]\}\frac{\dot{x}^{i}}{\alpha } 
\notag \\
&&-2\{\mathit{\tilde{\Psi}}[-2\alpha \tilde{Q}s_{0}+r_{00}+\alpha ^{2}(2%
\tilde{R}r+\tilde{P}q)]+\alpha \lbrack \mathit{\tilde{\Pi}}%
r_{0}(r_{0}+s_{0})+\frac{1}{2}\mathit{\tilde{\Upsilon}}\eta _{0}]\}W^{i}, 
\notag
\end{eqnarray}%
$i=1,2,$ where all terms $\tilde{R}$, $\tilde{Q}$, $\tilde{P},$ $\mathit{%
\tilde{\Theta}}$, $\mathit{\tilde{\Omega}}$, $\mathit{\tilde{\Pi}}$, $%
\mathit{\tilde{\Psi}}$, $\mathit{\tilde{\Lambda}}$, $\mathit{\tilde{\Upsilon}%
}$, etc. are given by \eqref{geo_tilde} and everywhere in %
\eqref{GGG_inclined}, $x^{1}=x^{1}(t),$ $x^{2}=x^{2}(t)$, $y^{1}=\dot{x}%
^{1}(t)$ and $y^{2}=\dot{x}^{2}(t).$

To ease notation, we henceforth write $x$ for $x^1$ and $y$ for $x^2$ on further reading. This is consistent with the symbols applied in the attached figures.

\subsection{Action of a superwind on a uniform slippery slope
} 
 

\label{sec_ex1}

First, we show the influence of a superwind of varying direction and norm on the Finslerian geodesics on a slippery slope $\mathfrak{S}$, where the traction coefficient is constant, $\eta\in [0, 1]$; cf. \cite{slippery}. 
In order to cover 
 all types of navigation in the full range of $\eta$ the strong convexity condition \eqref{Strong_C} 
 requires $||W||_h<\frac12$, which refers to the most restrictive case, i.e., MAT ($\eta=1$). 
 Therefore, let us consider a superwind $\mathcal{W}_{\eta }$ with the related ordinary wind $W$ 
 given by
\begin{equation}
\label{exeq_superwind}
\mathcal{W}_{\eta=0}(x,y)=\frac{2}{5}\cos x\frac{\partial }{\partial x}+\frac{1}{5}\cos y
\frac{\partial }{\partial y}
\end{equation} 
 and visualized in \cref{fig_ex1a}, where $||W||_h=\frac{%
1}{5}\sqrt{5\cos ^{2}x+\cos ^{2}y}
$. Indeed, we have $||W||_h\leqslant\frac{\sqrt{6}}{5}\approx0.4899$, for any $(x, y)\in \mathfrak{S}$ and the condition is checked for the whole slope $\mathfrak{S}$. Consequently, by using \eqref{INCL_IND}, the transformation of the Riemannian indicatrix represented by the elliptical $h$-circle and centered at $(0, 0)$  with respect to $\eta$ are shown in \cref{fig_ex1b}. In general, they include the anisotropic deformation and rigid translation, which yield the Pascal's Snails. Note both the analogy with and the difference from the impact 
 of the active gravitational wind $\mathbf{G}_\eta=\eta \mathbf{G}_{MAT}+(1-\eta
)\mathbf{G}^{T}$ as in \cite{slippery}, which becomes a particular superwind in the current study, where $\mathbf{G}_{\eta=0}=\mathbf{G}^{T}$ blows, in contrast to $W,$ in a fixed direction on the entire  $\mathfrak{S}$. 
  It is obvious that $\mathcal{W}_{\eta=0}(0,0)=\frac{2}{5}\frac{\partial }{\partial x}+\frac{1}{5}
\frac{\partial }{\partial y}$, while $\mathbf{G}_{\eta=0}(0,0)=-\frac{2\bar{g}}{5}
\frac{\partial }{\partial x}=-0.446\frac{\partial }{\partial x}$, $\bar{g}=1.115<\frac{\sqrt{5}}{2}\approx 1.118$, which checks $||\mathbf{G}^{T}||_h<\frac12$; for clarity, see \cref{fig_ex1b} (right). 
For comparison, one can glance at the comprehensive analysis of time-minimizing navigation on the slippery $\eta$-slope under gravity which is presented in \cite{slippery, general}. Note that because of the arbitrary direction and norm (mild) of an ordinary wind $W$ which are now admitted, the notion of superwind allows us now to generalize the standard Zermelo navigation considerably.


\begin{figure}[h!]
\centering
~\includegraphics[width=0.487\textwidth]{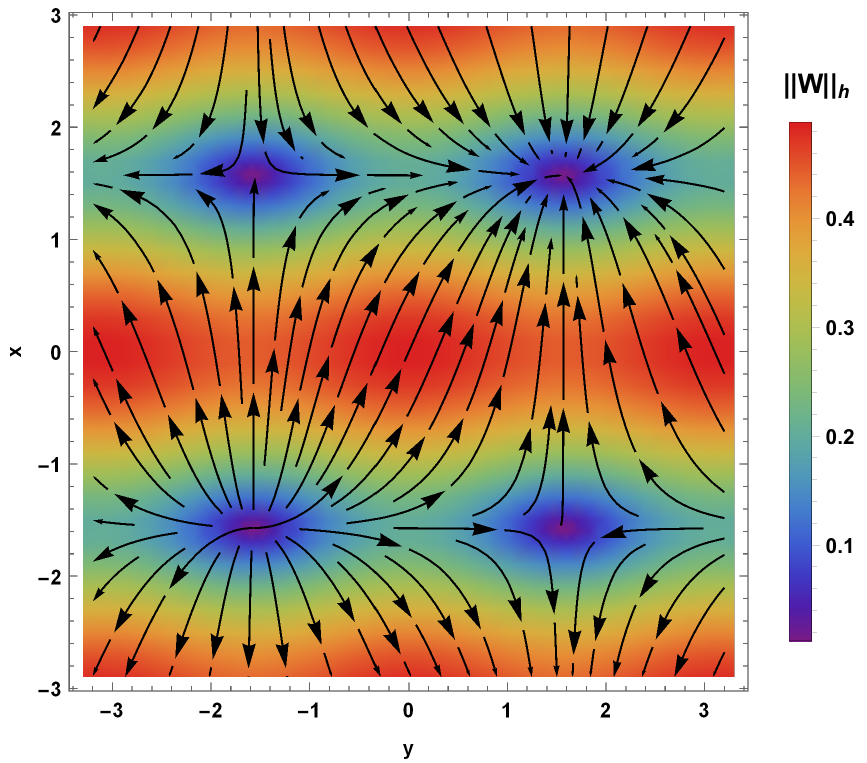}%
~\includegraphics[width=0.487\textwidth]{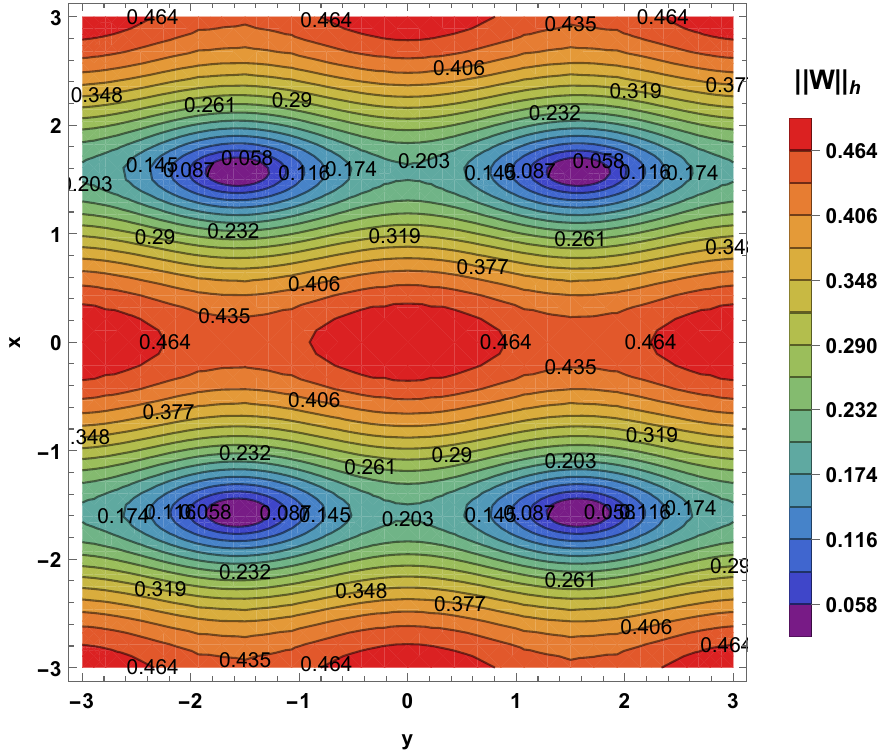}
\caption{Left: The stream density plot of the ordinary wind $W(x,y)$ given by \eqref{exeq_superwind}, i.e., the superwind in the Zermelo case that acts on the inclined plane $\mathfrak{S}$. The color-coded background represents the wind ``force'', satisfying the strong convexity condition for all $(x, y)\in\mathfrak{S}$, i.e., $||W||_h\leqslant\frac{\sqrt{6}}{5}\approx0.4899<\frac12$. Right: The contour plot of $\mathcal{W}_{\eta=0}=W$, as on the left.}
\label{fig_ex1a}
\end{figure}

\begin{figure}[h!]
\centering
~\includegraphics[width=0.489\textwidth]{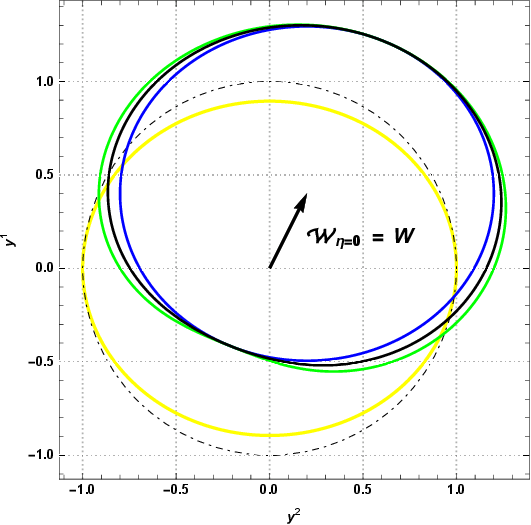} 
~\includegraphics[width=0.489\textwidth]{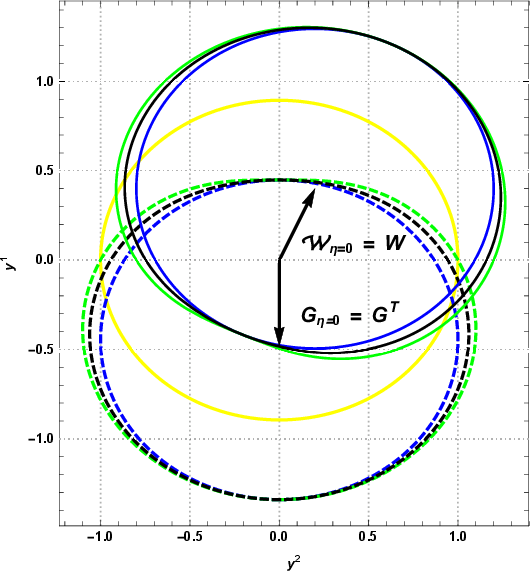} 
\caption{Left: The Finslerian indicatrices centered at $O=(0, 0)$ in the tangent plane  $y^1Oy^2$ under the action of the superwind $\mathcal{W}_{{\eta}}$ (solid), where the space-dependent wind $\mathcal{W}_{\eta=0}=W(x,y)$ is given by \eqref{exeq_superwind}, in the presence of constant tractions, i.e.,  $\eta\in\{0 \ \textnormal{(blue, ZNP)}, \frac34 \ \textnormal{(black}), 1 \ \textnormal{(green, MAT)}\}$.  For comparison, the initial Riemannian indicatrix being the elliptical $h$-circle (yellow) in the absence of superwind and the common Euclidean circle (dashed grey). Right: As on the left and compared in addition with the Finslerian indicatrices deformed under a different superwind being the gravitational wind $\mathbf{G}_\eta$ (dashed), where  $\bar{g}=1.115<\frac{\sqrt{5}}{2}\approx 1.118$, satisfying the condition for strong convexity $||\mathbf{G}^{T}||_h<\frac12$. In contrast to $W$,  $\mathbf{G}_{\eta=0}=\mathbf{G}^{T}$ blows in a fixed  direction (steepest descent) and has a constant norm on $\mathfrak{S}$, i.e.,  $||\mathbf{G}^{T}||_h=0.49$.}
\label{fig_ex1b}
\end{figure}

Taking into account the system \eqref{GGG_inclined}, the scenario in a three-dimensional view of $\mathfrak{S}$  is presented in \cref{fig_ex1c} (left) including the time-minimizing geodesics for $\eta=\frac34$ and the corresponding isochrone under the superwind $\mathcal{W}_{\eta}$ which lies entirely between the classical Matsumoto and Zermelo cases, where $t=5$. The behaviour of the current time front in the presence of a different perturbation remains in line with the outcomes obtained in the previous study in the presence of gravity (\cite{slippery}). Moreover, the evolution of the isochrones in time ($t\in\{3, 5, 7\}$) is shown in \cref{fig_ex1c} (right), where, for the same time instants, the Matsumoto front is outermost and the Zermelo front is innermost. For the sake of clarity, the behaviours of the Finslerian geodesics ($t=3$) are also compared in \cref{fig_ex51}, emphasizing the action of the superwind in the backgrounds.

\begin{figure}[H]
\centering
~\includegraphics[width=0.53\textwidth]{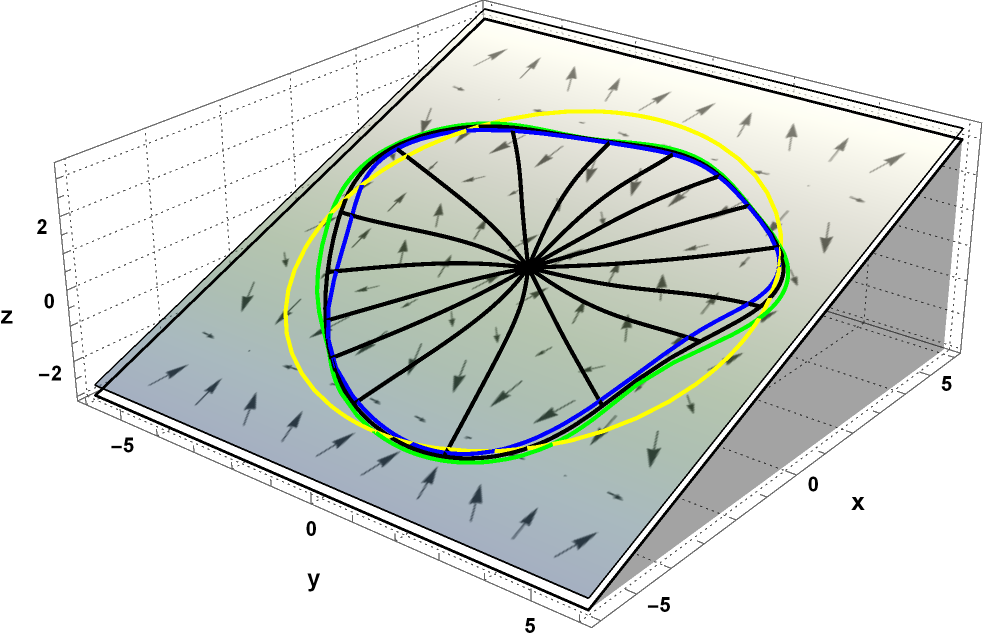}
~\includegraphics[width=0.44\textwidth]{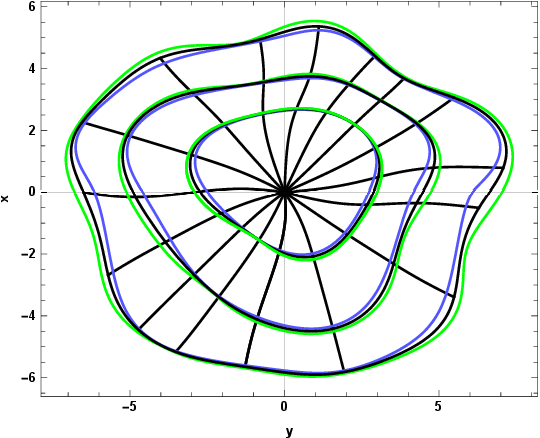} 
\caption{Left: The time-minimizing geodesics and time front centered at $(0, 0)$ under the action of the superwind $\mathcal{W}_{{\eta}}$ and constant traction $\eta=\frac34$ (black) on the inclined plane $\mathfrak{S}$, compared with the edge cases: $\eta=0$ (blue, ZNP) and $\eta=1$ (green, MAT) as well as the Riemannian isochrone (yellow), where the related ordianry wind (in the Zermelo sense) reads  \eqref{exeq_superwind} (its action is indicated by black arrows); $t=5$. Right: The behaviour of the time-minimizing geodesics ($\eta=\frac34$) and evolution of the corresponding time fronts for $t\in\{3\ \textnormal{(innermost)}, 5\ \textnormal{(middle)}, 7\ \textnormal{(outermost)}\}$ (color-coded as on the left). The geodesics are drawn with a step of $\Delta\protect\theta=\protect\frac{\pi}{8}$, i.e., 16 time-minimizing paths in both subfigures.}
\label{fig_ex1c}
\end{figure}
\begin{figure}[H]
\centering
~\includegraphics[width=0.46\textwidth]{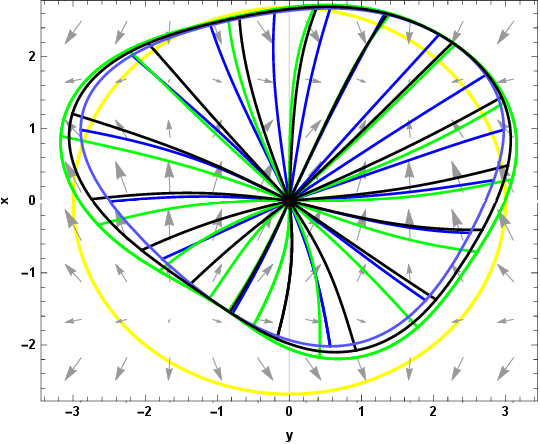} 
~\includegraphics[width=0.48\textwidth]{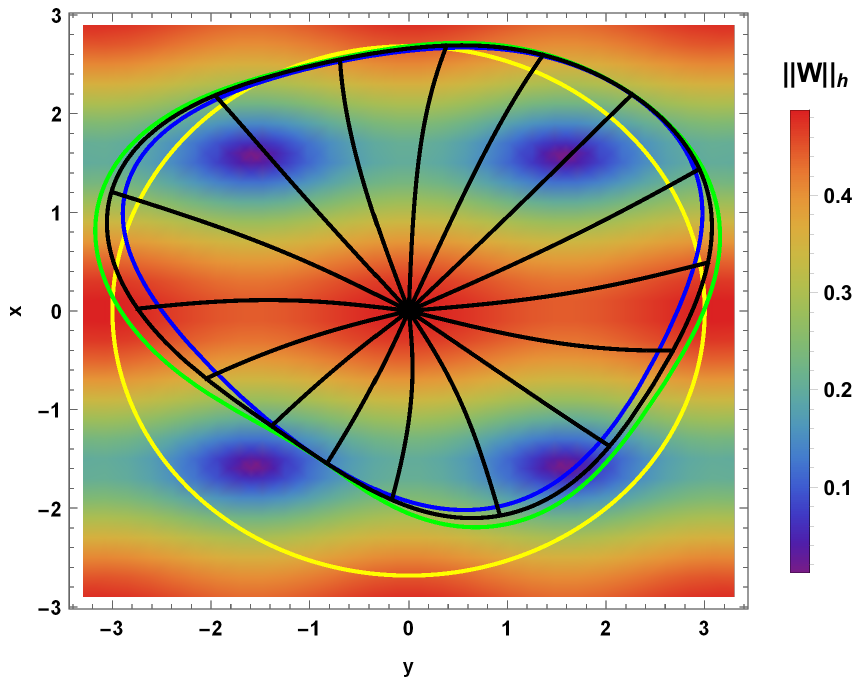} \\~\\
\caption{Left: The time-minimizing geodesics and time fronts centered at $(0, 0)$ under the action of the superwind related to \eqref{exeq_superwind} (grey arrows), for different types of $\eta$-navigation on $\mathfrak{S}$, i.e., $\eta\in\{0 \ \textnormal{(ZNP, blue)}, \frac34 \ \textnormal{(black}), 1 \ \textnormal{(MAT, green)}\}$; $t=3$. The geodesics are presented with a step of $\Delta\protect\theta=\protect\frac{\pi}{8}$ and the unperturbed Riemannian isochrone is marked in yellow. 
Right: As on the left, with the color-coded background that represents the space-dependent wind norm $||W||_h$; $t=3$.}
\label{fig_ex51}
\end{figure}

\begin{figure}[h!]
\centering
~\includegraphics[width=0.482\textwidth]{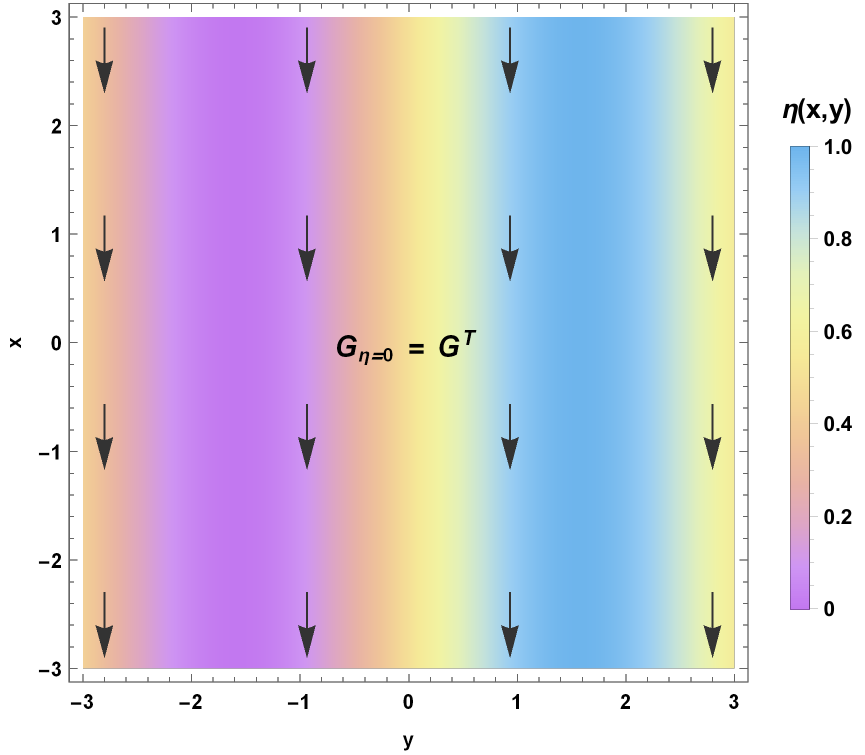}
~\includegraphics[width=0.49\textwidth]{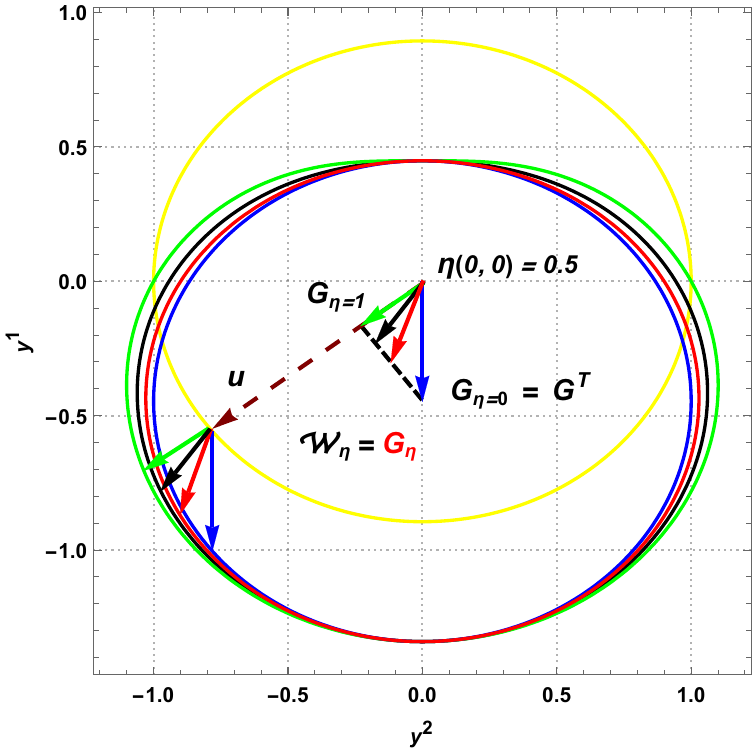} 
\caption{Left: The deforming factors on the inclined plane $\mathfrak{S}$: the gravitational wind $\mathbf{G}_{{\eta}}$ considered as a particular superwind, where $W=\mathbf{G}_{{\eta=0}}=\mathbf{G}^{T}$ blows in the steepest downhill direction (black arrows) of the slope and the space-dependent traction coefficient is given by \eqref{eta_1} (the color-coded  background); $\bar{g}=1.115<\frac{\sqrt{5}}{2}\approx1.118$. 
Right: A comparison of the Finslerian indicatrices (lima\c{c}ons) in the tangent plane $y^1Oy^2$, centered at $O=(0, 0)$, under $\mathbf{G}_{{\eta}}$ and varying $\eta(x,y)$ (red), $\eta=1$ (MAT, green),  $\eta=0$ (ZNP, blue), $\eta=\frac34$ (black) and the Riemannian case (yellow); $t=1$. Note the $y^1$-axial symmetry of the indicatrix referring to the varying traction (red) 
 which coincides herein with the indicatrix in the presence of constant traction on the entire slope, i.e., $\eta=\frac12$, since $\eta(0, 0)=
\frac12$. The craft's own velocity is denoted by $u$ (dashed brown), with 
 $||u||_h=1$.}
\label{fig_ex2}
\end{figure}

\subsection{Influence of a non-uniform slippery slope under a gravitational wind 
} 


\label{sec_ex2}

In contrast to the example from the preceding subsection and investigations in \cite{slippery, cross, slipperyx, general}, the traction coefficient is now space-dependent, i.e., 
\begin{equation} 
\eta(x,y)=\frac12(1+\sin y) \in [0, 1],   
\label{eta_1}
\end{equation}
 for any $(x, y)\in \mathfrak{S}$.
Moreover, the superwind $\mathcal{W}_{\eta }$ is represented by the active gravitational wind $\mathbf{G}_{\eta}$, where  the related ordinary wind (in the Zermelo sense) reads  $W=\mathbf{G}_{{\eta=0}}=\mathbf{G}^{T}=-\frac{2\bar{g}}{5}\frac{\partial }{\partial x}$  and blows in the steepest downhill direction on the inclined plane $\mathfrak{S}$; see \cref{fig_ex2} (left). 
The relations between the Finslerian indicatrices (lima\c{c}ons) centered at the origin, where $\eta(0,0)=\frac12$ are shown in \cref{fig_ex2} (right). The axial symmetry of the indicatrix referring to the varying traction should be noted, unlike the corresponding time front shown in \cref{fig_ex2a} (left). 
 It is worth pointing out that the time-minimizing geodesics with the varying traction that come from \eqref{GGG_inclined} (red in \cref{fig_ex2a} (bottom left)) are not in fact the Euclidean straight lines, although they may look so 
 in the attached graph. In general, this is in contrast to the geodesics coming from the constant traction scenarios, e.g., the Randers (ZNP) and Matsumoto (MAT) paths which are straight. For clarity, the changes of the ratio $\frac{x'(t)}{y'(t)}$ in time for two exemplary individuals, i.e., with the inital direction $\theta_0\in\{106^\circ, 130^\circ\}$ measured clockwise from the axis $-x$ show clearly that the directions of the tangent vectors to the geodesics vary (\cref{fig_ex2a} (bottom right)). Moreover, the geodesics and time fronts for $\eta\in\{\frac12, \frac34\}$ are compared in addition in \cref{fig_ex2a} (bottom left). 

Finally, all the analyzed cases in the presence of the superwind $\mathbf{G}_{\eta}$ and varying traction \eqref{eta_1} are juxtaposed clearly on the planar map in \cref{fig_ex2b}. The impact of the varying $\eta$ is visible, though not substantial, since the related (red) isochrone lies entirely between the boundary  fronts (ZNP and MAT) which are not far away from each other in this example, concerning the weak wind. The fact that the isochrone lies between the Randers and Matsumoto fronts was actually expected on the basis of the preceding analysis on the slippery slope models under a gravitational wind in \cite{slippery, general}. Furthermore, observe the lack of axial symmetry of the isochrone (red) w.r.t. to $x$ should be observed, unlike all the other cases that refer to the constant values of $\eta$.

\begin{figure}[H]
\centering
~\includegraphics[width=0.435\textwidth]{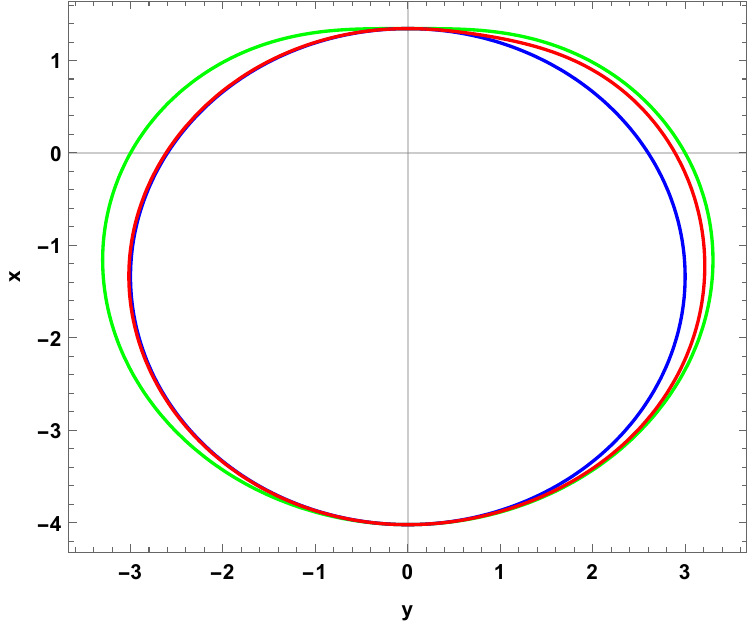}
~\includegraphics[width=0.54\textwidth]{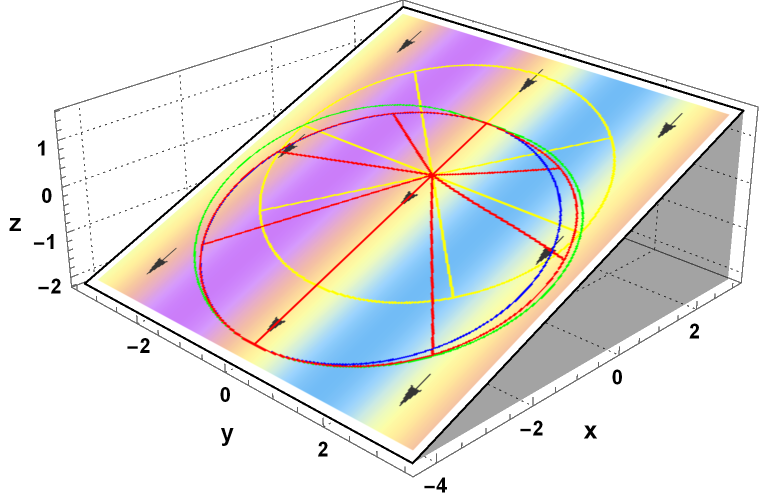} \\~\\
~\includegraphics[width=0.489\textwidth]{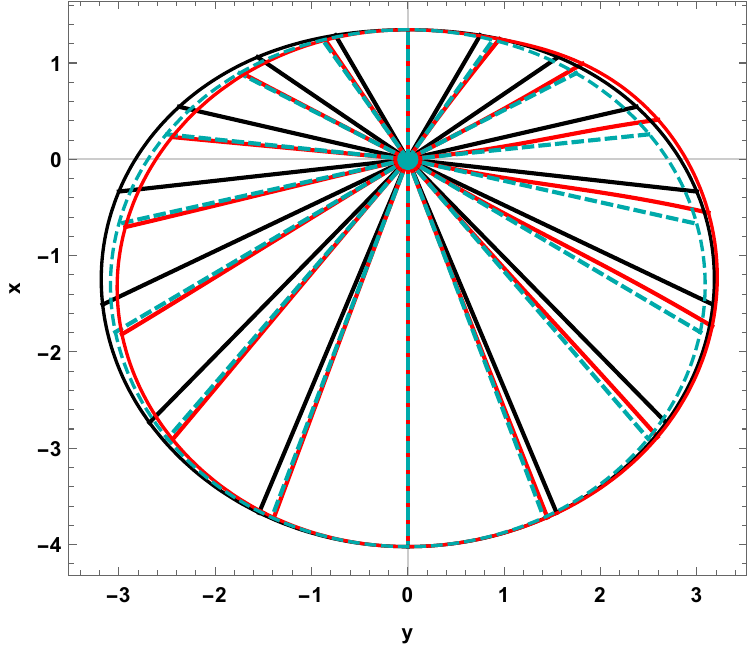}
~\includegraphics[width=0.489\textwidth]{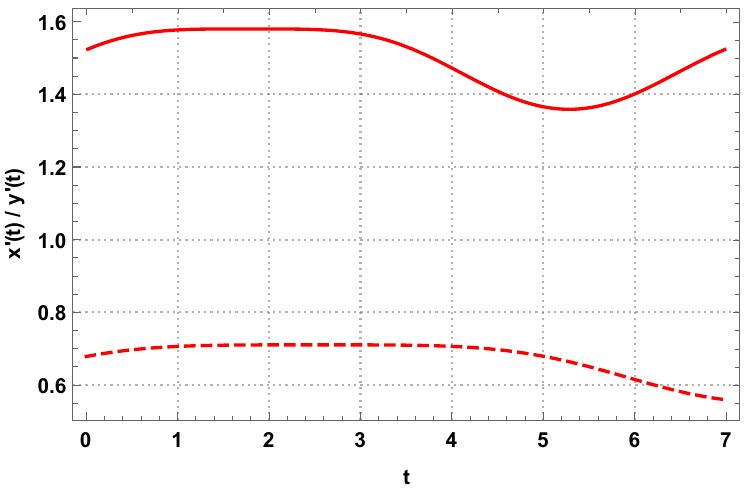}
\caption{Top left: The time front centered at $(0, 0)$ under the action of the gravitational wind $\mathbf{G}_{{\eta}}$ considered as a particular superwind and the space-dependent traction coefficient \eqref{eta_1} (red; note its lack of $x$-symmetry), compared to the edge cases, i.e.,  $\eta=0$ (blue, ZNP) and $\eta=1$ (green, MAT); $t=3$. 
Bottom left: As above (red) and compared with the cases $\eta=\frac34$ (black) and $\eta=\frac12$ (dashed), together with the respective geodesics drawn with the step of $\Delta\protect\theta=\protect\frac{\pi}{8}$, i.e., 16 time-minimizing paths. Note that the time-minimizing geodesics for the varying $\eta$ (red) are not the Euclidean straight lines, although they look 
 to be 
 so due to the relatively slight changes. In contrast, the Finslerian geodesics for both $\eta=\frac34$ (black) and $\eta=\frac12$ (dashed lines) are the Euclidean straight lines; $t=3$. 
Bottom right: The change of the ratio $\frac{x'(t)}{y'(t)}$ in time, showing the directions of the tangent vectors to the geodesics related to the space-dependent traction vary, where the exemplary initial directions $\theta(0)$ equal to $106^\circ$ (solid red) and $130^\circ$ (dashed red) are measured clockwise from $- x$; $t=7$. 
 Top right: The scenario presented on the inclined plane $\mathfrak{S}$ under the influence of the  superwind $\mathbf{G}_{{\eta}}$ and the varying traction coefficient \eqref{eta_1}, including the time fronts and time-minimizing geodesics (colour-coded as above) as well as the Riemannian geodesics (yellow; drawn with a step of $\Delta\protect\theta=\protect\frac{\pi}{4}$) and,  additionally, the time front; $t=3$.}
\label{fig_ex2a}
\end{figure}

\begin{figure}[H]
\centering
~\includegraphics[width=0.6\textwidth]{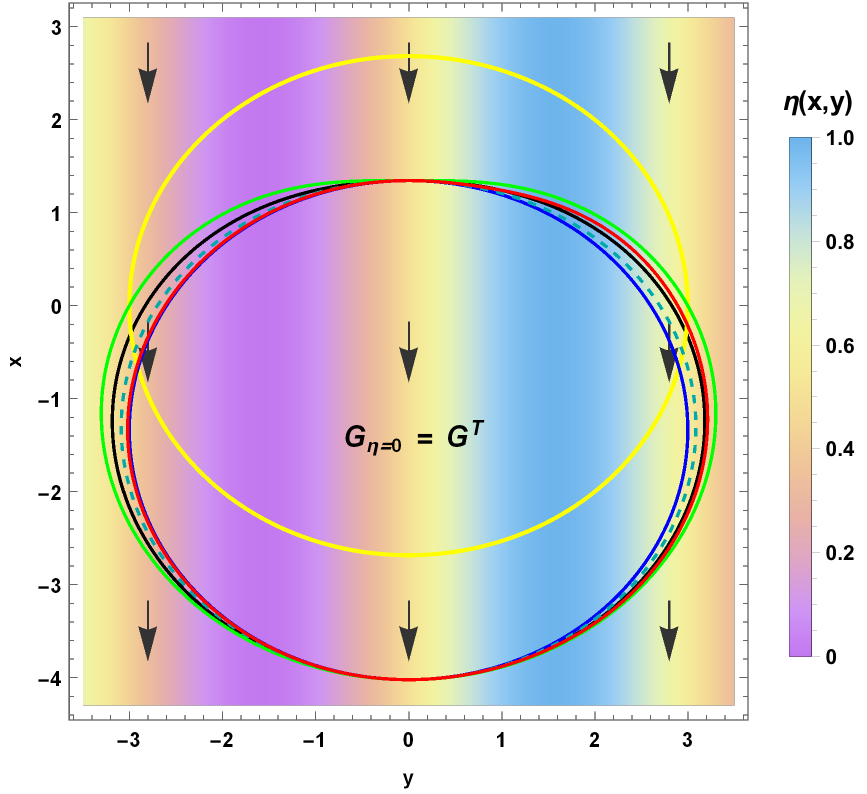}
\caption{A comparison of time fronts for $t=3$: with the space-dependent traction coefficient  \eqref{eta_1} (red), Riemannian (yellow), $\eta=0$ (ZNP, blue), $\eta=\frac12$ (dashed cyan), $\eta=\frac34$ (black), $\eta=1$ (MAT, green), under the influence of the gravitational wind $\mathbf{G}_{{\eta}}$ (black arrows) acting in the steepest downhill direction, which is considered as a particular superwind in the current study; $\bar{g}=1.115<\frac{\sqrt{5}}{2}\approx1.118$. The isochrone corresponding the varying $\eta$ (red) is not symmetric w.r.t. the axis $x$, unlike all other time fronts for $\eta=const.$}
\label{fig_ex2b}
\end{figure}

\subsection{
Action of a superwind on a non-uniform slippery slope
}

\label{sec_ex3}

%
%

The last example presents the combined effect of a superwind and space-dependent traction coefficient on the ramp $\mathfrak{S}$. The former perturbation is given by \eqref{exeq_superwind} like in \cref{sec_ex1}, while the latter now reads
\begin{equation}
\eta(x,y)=e^{-(x^2+y^2)} \in (0, 1],  
\label{eta_2}
\end{equation}
for any $(x, y)\in \mathfrak{S}$. The corresponding stream, density and contour plots are presented graphically in \cref{fig_ex3}; see also \cref{fig_ex1a}. Now one can observe the influence of varying $\eta$ on the behaviour of time-optimal paths and evolution of time fronts under the same superwind in comparison with the example, including the uniform slippery slope ($\eta=const.$) analyzed in \cref{sec_ex1}. Making use of \eqref{GGG_inclined}, the time-minimizing geodesics starting at $(0, 0)$ and related isochrones are presented for $t\in\{2, 5\}$ as well as compared with the boundary cases (Zermelo-Randers and Matsumoto) and the  undeformed Riemannian case in \cref{fig_ex3a} and \cref{fig_ex3b}. An analogous scenario, however with a different initial point at $(\frac34, \frac34)$, is shown in \cref{fig_ex3c}.  Note that the time front for the varying $\eta$ (red) comes closer and closer to the Zermelo one (blue) in the example under consideration, if the distance from the origin increases. Clearly, this is a direct  consequence of the formula for  the applied traction coefficient \eqref{eta_2}, which includes the exponential function; see also \cref{fig_ex3} (right) in this regard. Namely, the impact of the varying traction becomes minor, where the geodesics move away from the circular neighbourhood of the origin and almost coincide with the ZNP case, i.e., $\eta(x, y)\rightarrow0$.  Then, the deformation of the geodesics is caused 
mainly by the ordinary wind $W=\mathcal{W}_0$ defined in \eqref{exeq_superwind}. 

For completeness, the detailed juxtapositions of time-minimizing geodesics and respective time fronts, where $t\in\{3, 5, 7\}$ under the action of the superwind $\mathcal{W}_\eta$ with varying and fixed traction coefficients, as well as at two different initial points, are analyzed in \cref{fig_ex3d}; cf. \cite{slippery}. 
We remark that the difference between time fronts for the individual types of $\eta$-navigation will be more visible if one considers a stronger superwind, i.e., $||W||_h>\frac12$. In our exposition, however, we aimed to consider and compare $\eta$-navigation in the full range, and therefore, the strict condition for strong convexity yields $||W||_h<\frac12$. Furthermore, the obtained isochrones lie entirely between the Zermelo-Randers and Matsumoto cases, as expected; for comparison, see also the outcomes (\cref{fig_ex2b}) analyzed for different space-dependent traction \eqref{eta_1}.       

\begin{figure}[H]
\centering
~\includegraphics[width=0.489\textwidth]{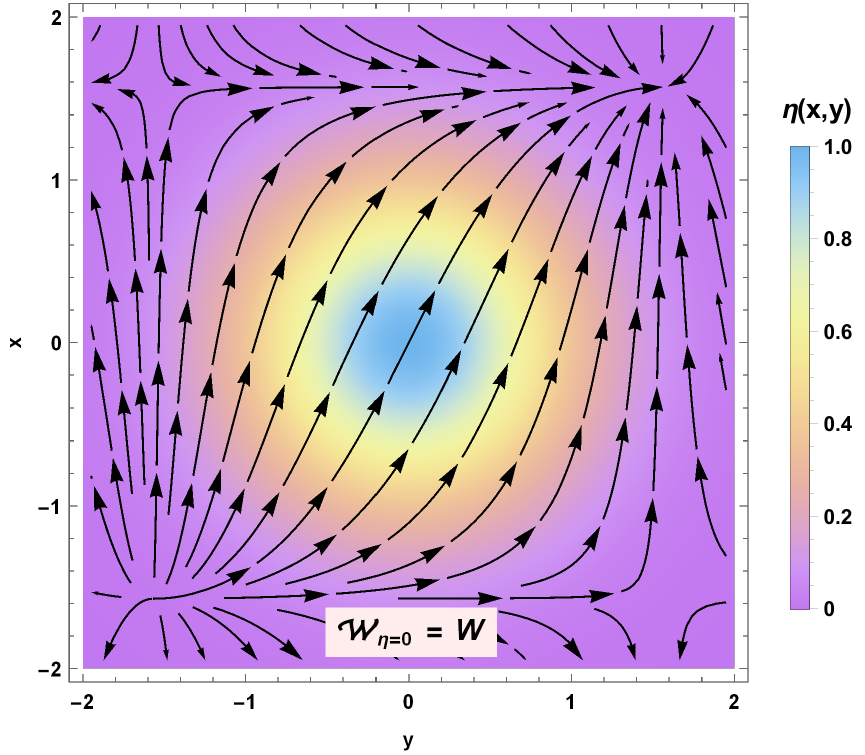}
~\includegraphics[width=0.489\textwidth]{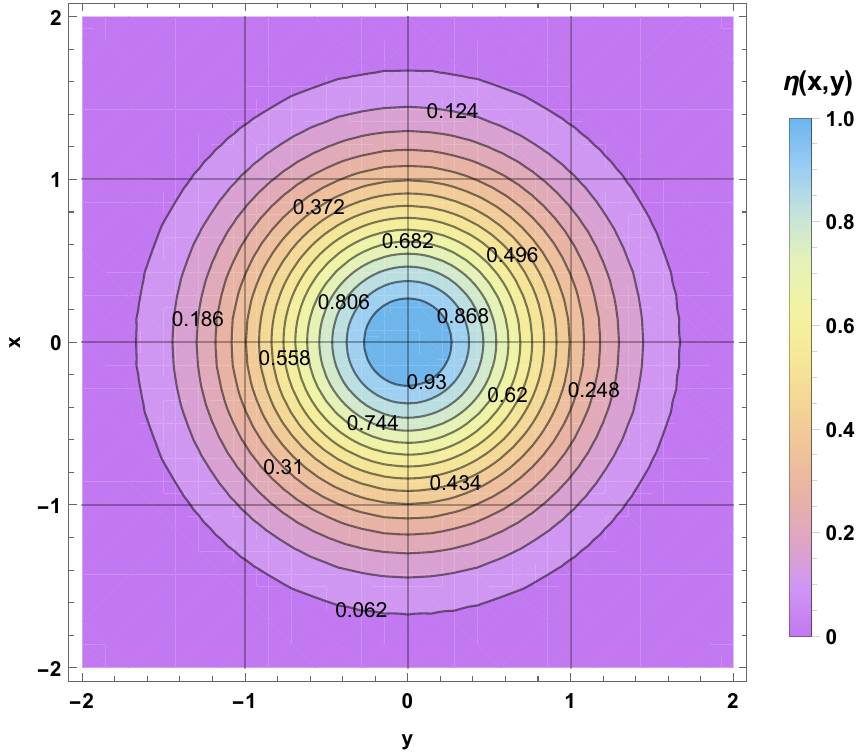}
\caption{Left: A stream plot of the ordinary wind in the Zermelo sense $W(x,y)=\mathcal{W}_{\eta=0}$ (black arrows) as in \eqref{exeq_superwind} and density plot of the space-dependent traction coefficient  \eqref{eta_2} (the color-coded background) on  $\mathfrak{S}$. Right: A contour plot of the varying traction coefficient $\eta(x,y)\in(0, 1]$ given by \eqref{eta_2}, as on the left.}
\label{fig_ex3}
\end{figure}

\begin{figure}[H]
\centering
~\includegraphics[width=0.54\textwidth]{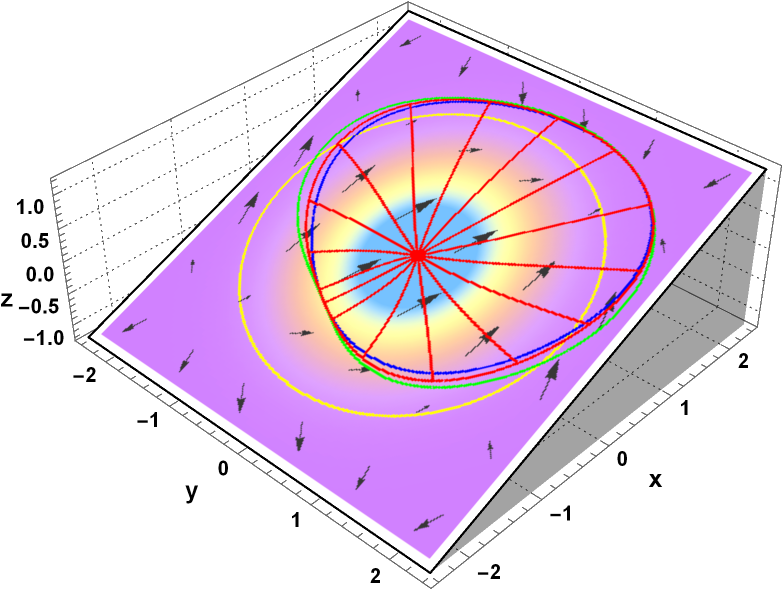}
~\includegraphics[width=0.45\textwidth]{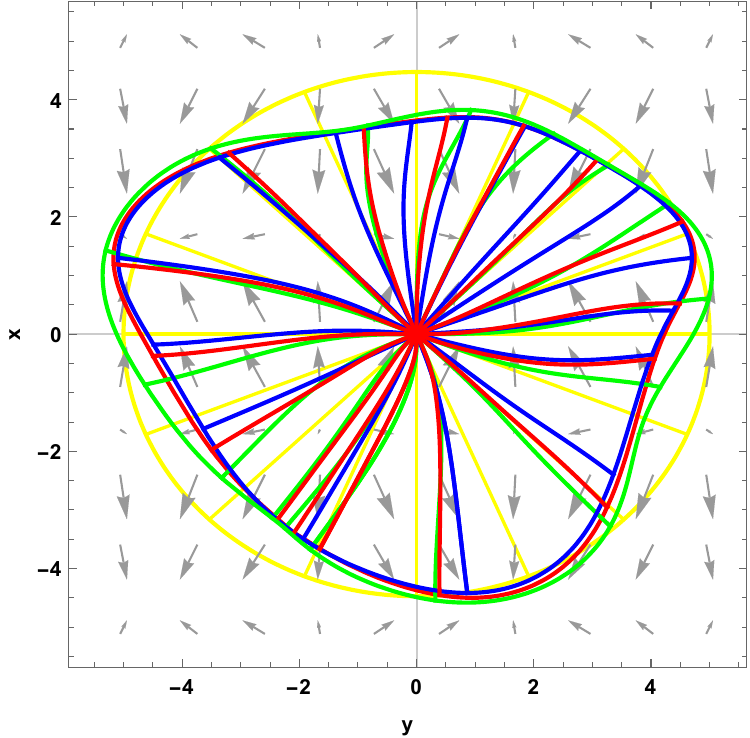}
\caption{Left: The time-minimizing geodesics and time fronts for the varying  traction \eqref{eta_2} (red) on $\mathfrak{S}$, together with the time fronts for $\eta=0$ (blue, ZNP), $\eta=1$ (green, MAT) and the unperturbed Riemannian case (yellow);  $t=2$. The initial points is positioned at $(0, 0)$. In the background, the action of the vector field \eqref{exeq_superwind} (black arrows) and space-dependent traction coefficient $\eta(x,y)$ (pastel colors). Right: The same scenario as on the left, presented on the planar map and with the respective geodesics in addition, drawn with a step of $\Delta\protect\theta=\protect\frac{\pi}{8}$; $t=5$.}
\label{fig_ex3a}
\end{figure}

\begin{figure}[H]
\centering
~\includegraphics[width=0.489\textwidth]{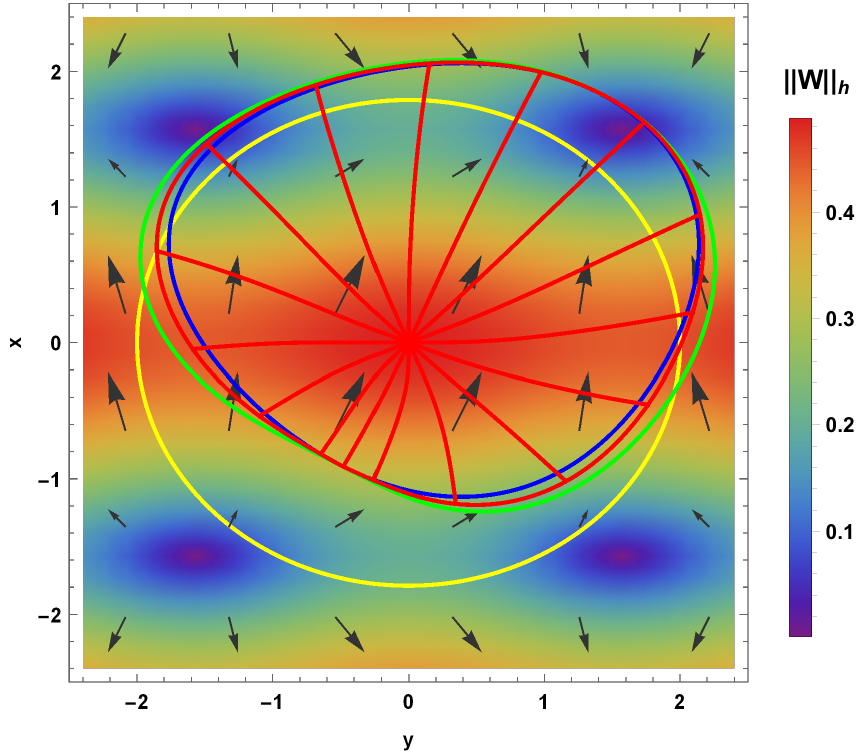}
~\includegraphics[width=0.489\textwidth]{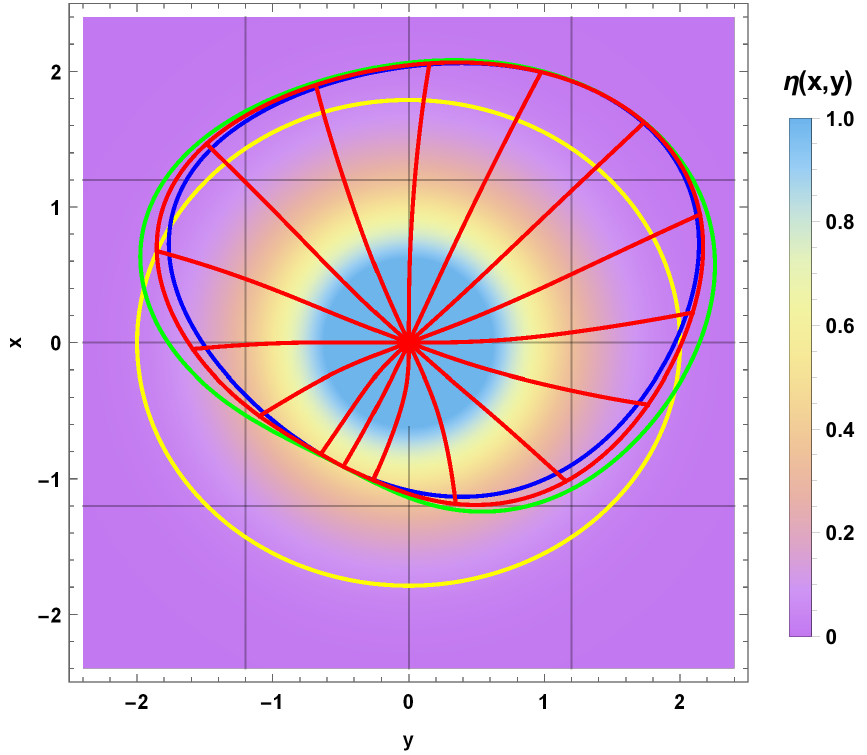}
\caption{The time fronts for: the varying traction coefficient \eqref{eta_2} 
  (red), $\eta=0$ (blue, ZNP), $\eta=1$ (green, MAT) and the unperturbed Riemannian case (yellow); $t=2$. The time-minimizing geodesics for $\eta(x,y)$ are drawn in red with a step of $\Delta\protect\theta=\protect\frac{\pi}{8}$. In the background, the color-coded wind ``force'' $||W||_h$ (left), where $W$ is given by  \eqref{exeq_superwind} (black arrows), and density plot of the space-dependent traction coefficient \eqref{eta_2} (right).}
\label{fig_ex3b}
\end{figure}

\begin{figure}[H]
\centering
~\includegraphics[width=0.53\textwidth]{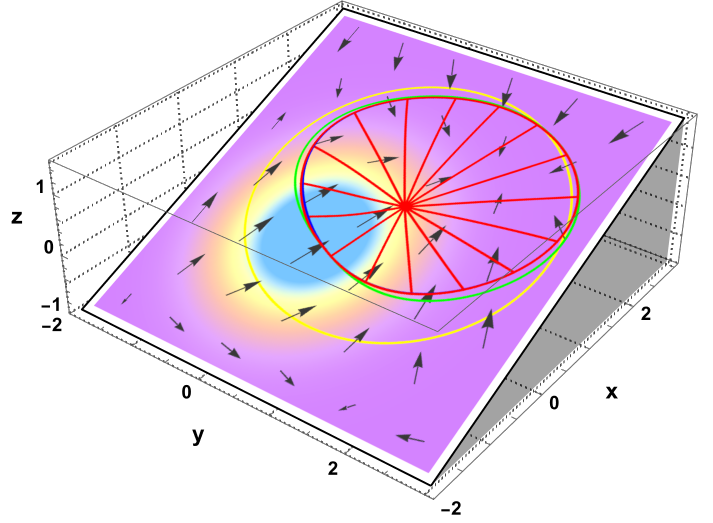}
~\includegraphics[width=0.449\textwidth]{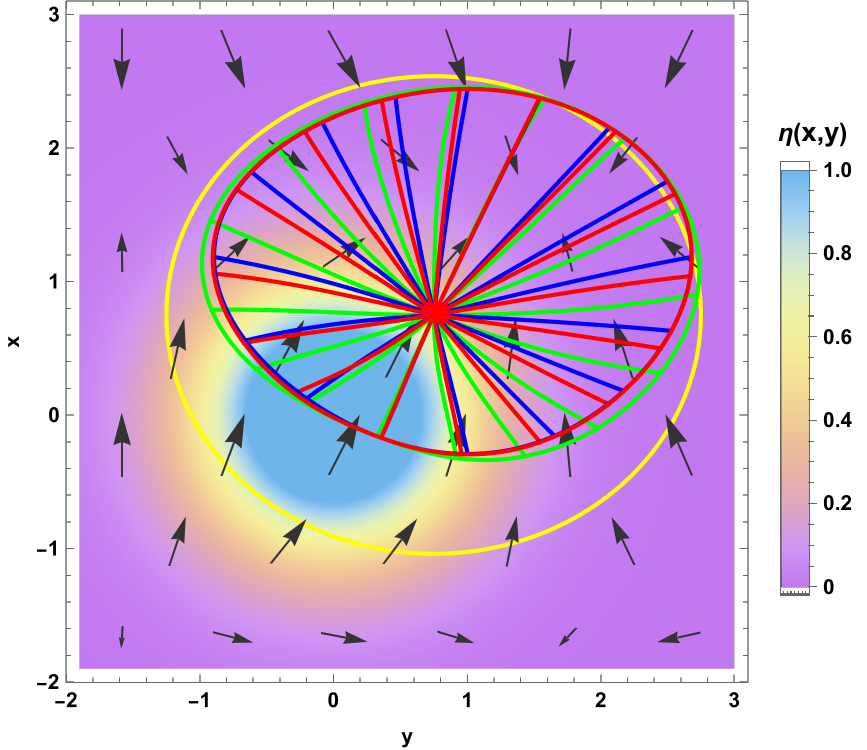}
\caption{The analogous scenario presented respectively as in \cref{fig_ex3a} (left) and \cref{fig_ex3b} (right), where the initial point is located at $(\frac34, \frac34)$, for comparison.}
\label{fig_ex3c}
\end{figure}

We remark that in the presented approach, the action of an arbitrary (weak) wind like in ZNP enables one to solve the optimal navigation problems in particular on the common (horizontal) Euclidean plane, while all scenarios under a gravitational wind simplify to the Riemannian case because the surface gradient then vanishes and hence, $\mathbf{G}^{T}=\overrightarrow{0}$ (see  \cite{slippery,cross,slipperyx,general}).  

\begin{figure}[H]
\centering
~\includegraphics[width=0.489\textwidth]{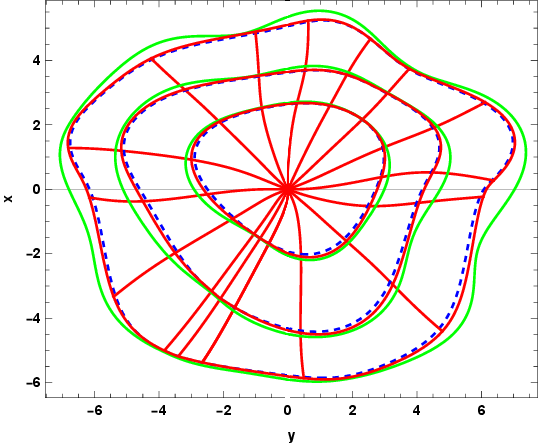}
~\includegraphics[width=0.489\textwidth]{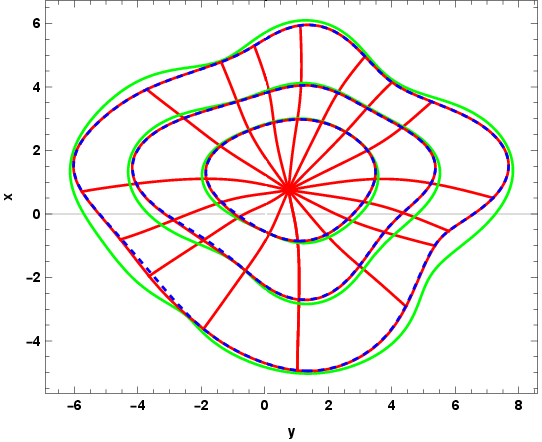}\\~\\
~\includegraphics[width=0.489\textwidth]{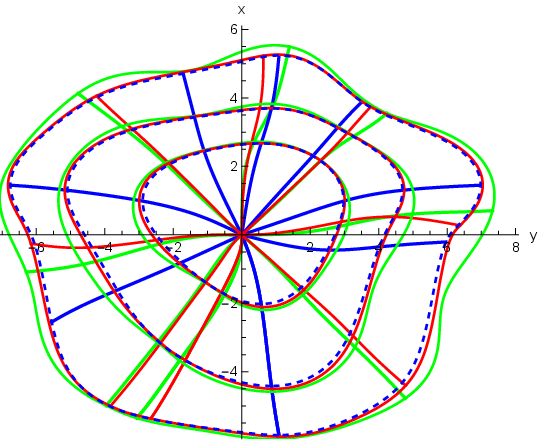}
~\includegraphics[width=0.489\textwidth]{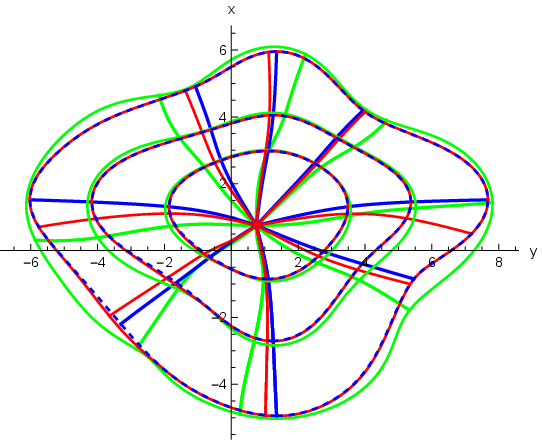}\\~\\
~\includegraphics[width=0.489\textwidth]{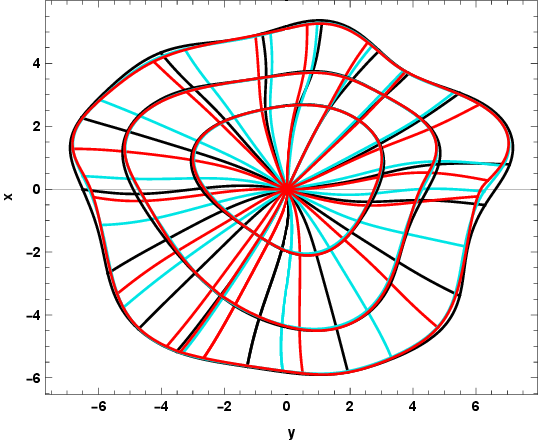}
~\includegraphics[width=0.489\textwidth]{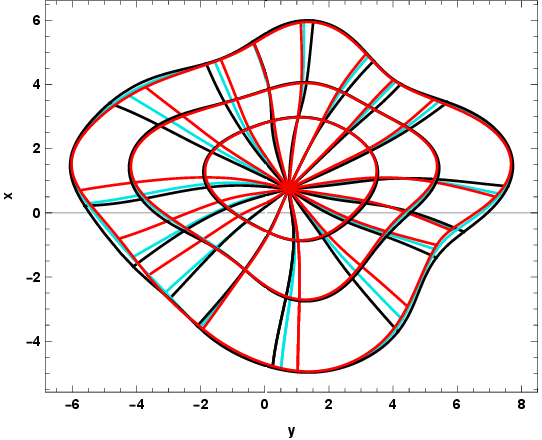}
\caption{The comparisons of the time-minimizing geodesics ($t=7$) and related time fronts ($t\in\{3 \  \textnormal{(innermost)}, 5 \ \textnormal{(middle)} ,7 \ \textnormal{(outermost)}\}$) under the influence of the superwind $\mathcal{W}_\eta$, where $\mathcal{W}_{\eta=0}=W$ is given by \eqref{exeq_superwind} and various tractions, i.e.,  $\eta(x,y)$ as in \eqref{eta_2} (red), $\eta\in\{0 \ \textnormal{(blue, ZNP)},  \frac12 \ \textnormal{(cyan)}, \frac34 \ \textnormal{(black)}, 1 \ \textnormal{(green, MAT)}\}$, where the initial point is located at $(0, 0)$ (left) and $(\frac34, \frac34)$ (right). The geodesics are drawn with a step of $\Delta\protect\theta=\protect\frac{\pi}{8}$ (top, bottom) and  $\Delta\protect\theta=\protect\frac{\pi}{4}$ (middle). For visual clarity, the time fronts in the Zermelo case ($\eta=0$) are shown as the blue dashed lines.} 
\label{fig_ex3d}
\end{figure}

%


%


\addcontentsline{toc}{section}{References} 
\bibliographystyle{plain}

\end{document}